\newtheorem{thm}{Theorem}[section]
\newtheorem{lemma}[thm]{Lemma}
\newtheorem{remark}[thm]{Remark}
\newtheorem{corollary}[thm]{Corollary}
\theoremstyle{definition}
\newtheorem{defn}[thm]{Definition}
\newtheorem{proposition}[thm]{Proposition}
 \theoremstyle{remark}
\numberwithin{equation}{section}
\begin{document}

\def\frakl{{\mathfrak L}}
\def\frakg{{\mathfrak G}}
\def\bbf{{\mathbb F}}
\def\bbl{{\mathbb L}}
\def\bbz{{\mathbb Z}}
\def\bbr{{\mathbb R}}
\def\bbc{{\mathbb C}}

\def\bvp{\bf{\varphi}}

\def\ad{\textsf{ad}}
\def\GL{\text{GL}}
\def\Der{\mbox{\rm Der}}
\def\Hom{\textsf{Hom}}
\def\ind{\textsf{ind}}
\def\res{\textsf{res}}
\def\gl{\frak{gl}}
\def\sl{\frak{sl}}
\def\Ker{\textsf{Ker}}
\def\Lie{\textsf{Lie}}
\def\id{\textsf{id}}
\def\det{\textsf{det}}
\def\Lie{\textsf{Lie}}
\def\Aut{\textsf{Aut}}
\def\Ext{\textsf{Ext}}
\def\Coker{\textsf{{Coker}}}
\def\dim{\textsf{{dim}}}
\def\pr{\mbox{\sf pr}}
\def\SL{\text{SL}}

\def\geqs{\geqslant}

\def\ba{{\mathbf a}}
\def\bd{{\mathbf d}}
\def\bbk{{\mathbb K}}
\def\co{{\mathcal O}}
\def\cn{{\mathcal N}}
\def\cv{{\mathcal V}}
\def\cz{{\mathcal Z}}
\def\cq{{\mathcal Q}}
\def\cf{{\mathcal F}}
\def\cc{{\mathcal C}}
\def\ca{{\mathcal A}}

\def\sl{{\frak{sl}}}

\def\ggg{{\frak g}}
\def\lll{{\frak l}}
\def\hhh{{\frak h}}
\def\nnn{{\frak n}}
\def\sss{{\frak s}}
\def\bbb{{\frak b}}
\def\ccc{{\frak c}}
\def\ooo{{\mathfrak o}}
\def\ppp{{\mathfrak p}}
\def\uuu{{\mathfrak u}}

\def\p{{[p]}}
\def\modf{\text{{\bf mod}$^F$-}}
\def\modr{\text{{\bf mod}$^r$-}}

\title[Enhanced Tensor Invariants] {Enhanced Brauer algebras and enhanced dualities for orthogonal and symplectic groups}
\author{Bin Liu}
\address{School of Mathematical Sciences, East China Normal University, Shanghai, 200241, China.} \email{1918724868@qq.com}
\subjclass[]{}
 \keywords{Enhanced Tensor Invariant, Enhanced Brauer Algebra, Dualities}
\thanks{This work is partially supported by the NSF of China (Grant: 12071136).}

\begin{abstract} This note is a sequel to Shu-Xue-Yao's paper \cite{BYY} where the author studied the so-called enhanced groups and related dualities for type $A$. In this note, we continue to investigate the enhanced dualities for classical groups of type $B$, $C$, and $D$. To show this, we first introduce an enhanced Brauer algebra. Due to Proposition \ref{B dual}, we can easily obtain the restricted and Levi dualities, hence the parabolic duality is naturally described. Moreover, a special case is listed in \S\ref{3} by some explicit calculations.
\end{abstract}

\maketitle
\section*{0. Introduction}
The main purposes is to investigate the dualities related to Levi and parabolic groups in the case $G=\text{O}(V) \text{ or } \text{Sp}(V)$. When $G=\GL(V)$, the related dualities have already listed in \cite[Theorem 4.3, Theorem 5.3, Conjecture 5.4]{BYY} with the help of the  finite-dimensional degenerate double Hecke algebras $D(n,r)$. Similarly, an important algebraic model $\mathcal{B}(\varepsilon,n,r)$ is introduced which is called an enhanced Brauer algebras. The enhanced Brauer algebras will be very useful to establish the dualities mentioned above.

The main results are listed as follows.
\begin{thm}
	Let $G$ be $\text{O}(V)$ or $\text{Sp}(V)$ for a finite dimensional vector space $V$ over $\mathbb{C}$, and $\underline{V}=V\oplus C\eta$ an one-dimensional extension of $V$. Denote by $\underline{G}$ the enhanced group associated with $G$ and $\underline{V}$ in the sense of \cite{BYY}. The following statements hold.
	\item[(1)] (Theorem \ref{restricted}) Assume $B_{st}$ is defined as in \S\ref{RTIs}, we have 
	\begin{align*}
	\text{End}_G(\underline{V}^{\otimes r})=\left(\bigoplus_{s-t \neq 0 \text{ is even}}\mathcal{B}_{st}\right) \bigoplus \mathcal{B}(\varepsilon,n,r).
	\end{align*}	
	\item[(2)] (Theorem \ref{AB Dual}) $\text{End}_{G \times \textbf{G}_m}(\underline{V}^{\otimes r})=\mathcal{B}(\varepsilon,n,r).$
	\item[(3)] (Corollary \ref{inclusion3}) $\text{End}_{\underline{G} \rtimes \textbf{G}_m}(\underline{V}^{\otimes r})=\mathcal{B}(\varepsilon,n,r)^V.$
\end{thm}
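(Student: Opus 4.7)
The strategy is to deduce part (3) as a direct consequence of part (2). Following \cite{BYY}, the enhanced group decomposes as $\underline{G} = G \ltimes V$, where the unipotent factor $V$ acts on $\underline{V} = V \oplus \mathbb{C}\eta$ by the translations $v \cdot (w + a\eta) = w + av + a\eta$ for $v, w \in V$ and $a \in \mathbb{C}$. Consequently $\underline{G} \rtimes \textbf{G}_m$ contains $G \times \textbf{G}_m$ as a subgroup, and one obtains the containment
\begin{align*}
\text{End}_{\underline{G} \rtimes \textbf{G}_m}(\underline{V}^{\otimes r}) \subseteq \text{End}_{G \times \textbf{G}_m}(\underline{V}^{\otimes r}) = \mathcal{B}(\varepsilon,n,r),
\end{align*}
where the second equality is exactly Theorem \ref{AB Dual} (statement (2) above).

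Next I would characterize which $\phi \in \mathcal{B}(\varepsilon,n,r)$ actually lie in $\text{End}_{\underline{G} \rtimes \textbf{G}_m}(\underline{V}^{\otimes r})$. Since $\underline{G} \rtimes \textbf{G}_m$ is generated as a group by $G$, $V$ and $\textbf{G}_m$, an endomorphism commutes with the whole group if and only if it commutes with each of these three subgroups separately; by part (2), every element of $\mathcal{B}(\varepsilon,n,r)$ already satisfies the $G$- and $\textbf{G}_m$-conditions, so the sole remaining requirement is commutation with $V$. Taking $\mathcal{B}(\varepsilon,n,r)^V$ to be the $V$-invariant subspace under the conjugation action induced by the representation of $V$ on $\underline{V}^{\otimes r}$, namely
\begin{align*}
\mathcal{B}(\varepsilon,n,r)^V := \{\phi \in \mathcal{B}(\varepsilon,n,r) : v\phi = \phi v \text{ for all } v \in V\},
\end{align*}
the desired identity $\text{End}_{\underline{G} \rtimes \textbf{G}_m}(\underline{V}^{\otimes r}) = \mathcal{B}(\varepsilon,n,r)^V$ is immediate.

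The principal obstacle is not the formal identification above but rather the concrete description of $\mathcal{B}(\varepsilon,n,r)^V$: one must analyze how the unipotent translations in $V$ act by conjugation on the generators of $\mathcal{B}(\varepsilon,n,r)$ (enhanced Brauer diagrams, with the decorations tracking the $\eta$-strands), and isolate the $\mathbb{C}$-linear combinations that are fixed. The explicit computations carried out in the special case of \S\ref{3} should serve as a blueprint for this combinatorial analysis and confirm that the $V$-invariant subspace is indeed a well-defined subalgebra.
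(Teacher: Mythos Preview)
Your argument for part (3) is correct and matches the paper's proof of Corollary \ref{inclusion3} essentially verbatim: the paper also observes that $G \times \textbf{G}_m \subseteq \underline{G} \rtimes \textbf{G}_m$, that every $\underline{g}=(g,v)$ factors as $(g,0)\cdot e^{g^{-1}v}$, and hence that commuting with $\underline{G} \rtimes \textbf{G}_m$ is exactly commuting with $G \times \textbf{G}_m$ together with all $e^v$, which by part (2) reduces to the condition defining $\mathcal{B}(\varepsilon,n,r)^V$.

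One clarification: your final paragraph about the ``principal obstacle'' of concretely describing $\mathcal{B}(\varepsilon,n,r)^V$ is not part of what Corollary \ref{inclusion3} asserts. The statement (3) is purely the formal identification $\text{End}_{\underline{G} \rtimes \textbf{G}_m}(\underline{V}^{\otimes r})=\mathcal{B}(\varepsilon,n,r)^V$, and your first two paragraphs already prove it in full. The explicit computation of $\mathcal{B}(\varepsilon,n,r)^V$ is the content of the separate Theorem \ref{inv thm} in \S\ref{3}, so you should not regard it as an unfinished step here.
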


\begin{thm} (Theorem \ref{inv thm})
	Assume $G=\text{Sp}(V)$ with $\dim V>2r$, or $G=\text{O}(V)$ with $\dim V \geq 2r$, then we have $\text{End}_{\underline{G} \rtimes \textbf{G}_m}(\underline{V}^{\otimes r})=\{\rho(f) \mid f \in \mathcal{B}_r(\varepsilon n)\}$. Here $\rho$ is defined in \S\ref{3} and \S\ref{algebra B}, and $\mathcal{B}_r(\varepsilon n)$ is defined in \S\ref{1.3}.
\end{thm}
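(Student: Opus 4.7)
The plan is to deduce this theorem from part (3) of the preceding theorem, which has already identified $\text{End}_{\underline{G}\rtimes\textbf{G}_m}(\underline{V}^{\otimes r})$ with the $V$-invariant subalgebra $\mathcal{B}(\varepsilon,n,r)^V$ of the enhanced Brauer algebra. Thus the remaining task is to establish the set-theoretic equality
\[
\mathcal{B}(\varepsilon,n,r)^V=\{\rho(f)\mid f\in\mathcal{B}_r(\varepsilon n)\}
\]
under the stable-range hypotheses on $\dim V$, at which point the theorem follows by composition.

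First I would unpack the map $\rho$ from \S\ref{3} and \S\ref{algebra B}: given a classical Brauer diagram $f\in\mathcal{B}_r(\varepsilon n)$, $\rho(f)$ is built from the usual permutation and contraction operators on the $V$-factors of $\underline{V}^{\otimes r}=(V\oplus\mathbb{C}\eta)^{\otimes r}$, extended compatibly across the $\eta$-direction. Because these building blocks commute with the $G$-action, are scaled equivariantly under the $\textbf{G}_m$-action on $\eta$, and land in the $V$-invariant part of the enhanced algebra by construction, the inclusion $\rho(\mathcal{B}_r(\varepsilon n))\subseteq\mathcal{B}(\varepsilon,n,r)^V$ is essentially formal; the substantive content lies in the reverse inclusion.

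For the reverse inclusion, I would use the $\textbf{G}_m$-action to decompose
\[
\underline{V}^{\otimes r}=\bigoplus_{k=0}^{r}\underline{V}^{\otimes r}_{k},
\]
where $\underline{V}^{\otimes r}_{k}$ collects tensors with exactly $k$ copies of $\eta$, and observe that any endomorphism in $\text{End}_{\underline{G}\rtimes\textbf{G}_m}(\underline{V}^{\otimes r})$ preserves this decomposition. On each graded piece, which is a sum of copies of $V^{\otimes(r-k)}$, the classical first fundamental theorem of invariant theory for $G=\text{Sp}(V)$ or $\text{O}(V)$ applies in the stable range $\dim V>2r$ (resp.\ $\dim V\geq 2r$), so that the $G$-equivariant endomorphisms are spanned by images of Brauer diagrams on $r-k$ strands. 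Reassembling these pieces into diagrams on $r$ strands, where $k$ distinguished strands are contracted against $\eta$, gives an element in the image of $\rho$ matching the original endomorphism.

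The main obstacle will be to verify that this graded reassembly really sweeps out all of $\mathcal{B}(\varepsilon,n,r)^V$ with no slack: one has to match the combinatorics of $\eta$-marked strands against the defining relations of the enhanced Brauer algebra, and rule out that some ostensibly new $V$-invariant tensor fails to be captured by a classical diagram. The stable-range hypotheses are decisive here because, via the second fundamental theorem for the symplectic/orthogonal groups, they guarantee that the action of $\mathcal{B}_{r-k}(\varepsilon n)$ on $V^{\otimes(r-k)}$ is faithful for every $0\le k\le r$, so that no determinantal or Pfaffian-type relations are required beyond those already built into $\mathcal{B}_r(\varepsilon n)$; consequently $\rho$ surjects onto $\mathcal{B}(\varepsilon,n,r)^V$ and the identification follows.
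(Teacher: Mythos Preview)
Your proposal has a genuine gap in the hard inclusion $\mathcal{B}(\varepsilon,n,r)^V \subseteq \rho(\mathcal{B}_r(\varepsilon n))$. The argument you sketch---decompose by $\textbf{G}_m$-weight, then apply the first fundamental theorem on each graded piece---only uses $G \times \textbf{G}_m$-equivariance and never the $V$-invariance. What it actually proves is that any $G \times \textbf{G}_m$-equivariant endomorphism restricts on each $\underline{V}_l^{\otimes r}$ to an element of the corresponding Brauer algebra; but that is precisely the Levi duality $\text{End}_{G\times\textbf{G}_m}(\underline{V}^{\otimes r}) = \mathcal{B}(\varepsilon,n,r)$ already established, and it in no way forces the Brauer elements on the different pieces to arise from a \emph{single} $f \in \mathcal{B}_r(\varepsilon n)$. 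Your appeal to faithfulness in the stable range is beside the point: faithfulness says the map $\mathcal{B}_{l}(\varepsilon n) \to \text{End}(V^{\otimes l})$ is injective, not that an arbitrary family $(f_I)_{I\subseteq\underline{r}}$ of such elements glues to a single $r$-strand diagram under $\rho$.

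The missing ingredient is precisely the action of $V$: the operators $\Phi(e^w)$ for $w \in V$ do \emph{not} preserve the $\textbf{G}_m$-grading (since $e^w\cdot\eta = w + \eta$), and it is commutation with them that forces coherence across the pieces. The paper exploits this by writing $\sigma = \sum_{I \subseteq \underline{r}} \rho_I(f_I)$ and running a downward induction on $\sharp I$: assuming $\rho_I(f_I) = \rho_I(f)$ for all $I$ with $\sharp I > k$, one compares $\sigma \circ \Phi(e^w)$ with $\Phi(e^w) \circ \sigma$ on vectors of $\underline{V}_{\underline{k}}^{\otimes r}$ and obtains $\rho(f - g)(D_{\textbf{v}}^w) = 0$ for all $w\in V$ and all $\textbf{v} \in \underline{V}_{\underline{k}}^{\otimes r}$, where $g$ is the part of $f_{\underline{k}}$ whose bars lie in $\underline{k}$. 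A separate technical lemma (proved by choosing $w$ outside the span of the tensor factors appearing in $\textbf{v}$) then shows that this vanishing forces $\rho_{\underline{k}}(f) = \rho_{\underline{k}}(g) = \rho_{\underline{k}}(f_{\underline{k}})$. The stable-range hypothesis enters through the basis description of $\mathcal{B}_r(\varepsilon n)$ needed for that lemma, not through the second fundamental theorem as you suggest.
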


We should note that the study of invariants beyond reductive groups is a challenge (see \cite{G1}, \cite{G2} and \cite{G3}). Consequently the invariant property of $\underline{G} \rtimes \textbf{G}_m$ as above has its own interest.

This note is organized as follows. We first introduce the enhanced Brauer algebras and give its basis ($n \geq 2r$) in \S\ref{1}. In \S\ref{2}, we first describe the restricted duality (see Theorem \ref{restricted}) with the help of the classical result Proposition \ref{B dual}. Next we give the Levi and parabolic dualities (see Theorem \ref{AB Dual} and \eqref{P Dual}). Finally, \S\ref{3} is devoted to the proof of Theorem \ref{inv thm} which is a special case of parabolic duality in \eqref{P Dual}.

\section{The enhanced Brauer algebras.}\label{1}

\subsection{A general setting-up.} In this note, all vector spaces are over a complex number field of $\mathbb{C}$. Let $G$ be a connected reductive algebraic group over $\mathbb{C}$, and $(V,\rho)$  be a finite-dimensional rational representation of $G$ with representation space $V$ over $\mathbb{C}$. We can define an enhanced reductive algebraic group $\underline{G}=G \times_{\rho} V$ as follows (see \cite{BYY} explicitly):

Regard $\underline{G}$ as a set, we have $\underline{G}=G \times V$; For any $(g_1, v_1), (g_2, v_2) \in G \times V$,
\begin{align}\label{def of group}
(g_1, v_1) \cdot (g_2, v_2):=(g_1g_2, \rho(g_1)(v_2)+v_1).
\end{align}

From now on, we will write down $e^v$ for $(1,v) \in \underline{G}$ ($1 \in G$ is the identity), and always suppose that $\dim V=n$, i.e. $V \cong \mathbb{C}^n$. Let $G_1$ and $G_2$ be subgroups of $G$, we denote by $\langle  G_1, G_2\rangle$ the subgroup of $G$ generated by $G_1$ and $G_2$.  All representations for algebraic groups are always assumed to be rational.

In particular, assume that $G$ is a closed subgroup of $\GL(V)$. Since $\GL(V)$ can naturally act on $V$, we can defined the enhanced reductive algebraic group $\underline{G}$ as above (where $\rho$ is just the natural representation of $G$ on $V$). Note that $V$ is a closed subgroup of $\underline{G}$, then an irreducible $G$-module becomes naturally an irreducible module of $\underline{G}$ with trivial $V$-action. The isomorphism classes of irreducible rational representations of $\underline{G}$ coincide with the ones of $G$. 

Let $\underline{V}:=V \oplus \mathbb{C}\eta \cong \mathbb{C}^{n+1}$, which is called the enhanced space of $V$. Naturally $\underline{V}$ becomes a $\underline{G}$-module that is defined for any $\underline{g}=(g,v) \in \underline{G}$ and $\underline{u}=u+a\eta \in \underline{V}$ with $g \in G$, $u,v \in V$ and $a \in \mathbb{C}$, via
\begin{align}\label{def of module}
\underline{g} \cdot \underline{u}:=\rho(g)(u)+av+a\eta.
\end{align}
It is easy to see that this module is a rational module of $\underline{G}$.

Since $\GL(V) \cong \GL_{n}$ and $\GL(\underline{V}) \cong \GL_{n+1}$, we can regard $\GL(V)$ and $\GL(\underline{V})$ as $\GL_{n}$ and $\GL_{n+1}$ respectively. Let $G$ be a closed subgroup of $\GL(V)$, there are canonical imbeddings of algebraic groups $G \hookrightarrow \GL_{n+1}$ given by 
\begin{align}
g \mapsto \text{diag}(g,1):=\left(\begin{matrix}
g & 0 \\
0 & 1
\end{matrix}\right), \forall g \in G.
\end{align}
and $\textbf{G}_m \hookrightarrow \GL_{n+1}$ given by \begin{align}
c \mapsto \text{diag}(1,\cdots,1,c):=\left(\begin{matrix}
I_n \\
 & c \\
\end{matrix}\right), \forall c \in \textbf{G}_m=\mathbb{C}^{\times}.
\end{align}
where $I_n$ is $n$ identity matrix and $\textbf{G}_m$ is the one-dimensional multiplicative algebraic group. In this sense, both $G$ and $\textbf{G}_m$ can be viewed as closed subgroups of $\GL_{n+1}$. 

Obviously, the closed subgroups $\langle G,\textbf{G}_m\rangle$ and $\langle \underline{G},\textbf{G}_m\rangle$ are isomorphic to $G \times {\textbf{G}}_m$ and $\underline{G} \rtimes {\textbf{G}}_m$ respectively. We identify all throughout the note.

Note that $\underline{G} \rtimes {\textbf{G}}_m$ is actually a parabolic subgroup of $\GL(\underline{V})$ associated with the Levi subgroup $G \times {\textbf{G}}_m$. Clearly, $\underline{G}$ is a closed subgroup of $\underline{G} \rtimes {\textbf{G}}_m$, then $e^v \in \underline{G} \rtimes {\textbf{G}}_m$ for all $v \in V$.

Obviously, if $G$ is a classical group, $G$ becomes a closed subgroup of $\GL(V)$. Now we suppose that $G$ is a closed subgroup of $\GL(V)$. The fact that $\GL(\underline{V})$ can naturally act on $\underline{V}$ implies that $\underline{V}$ can be viewed as a natural $\GL(\underline{V})$-module. Note that $\GL(V)$, $G$, $\underline{G}$, $G \times \textbf{G}_m$ and $\underline{G} \rtimes \textbf{G}_m$ are all closed subgroups of $\GL(\underline{V})$, hence $\underline{V}$ can be viewed as a natural $\GL({V})$-module, $G$-module, $\underline{G}$-module, $G \times \textbf{G}_m$-module and $\underline{G} \rtimes \textbf{G}_m$-module respectively. It is easy to see that the natural $\underline{G}$-module $\underline{V}$ coincides with (\ref{def of module}), where $\rho$ is just the natural representation of $G$ on $V$.

\subsection{Classical Schur-Weyl duality.}\label{1.2}
We can define a representation $(\underline{V}^{\otimes r},\Phi)$ of $\GL(\underline{V})$
\begin{equation}\label{tensor action}
\Phi(g)(v_1 \otimes v_2 \otimes \cdots \otimes v_r):=g(v_1) \otimes g(v_2) \otimes \cdots \otimes g(v_r).
\end{equation}
for any $g \in \GL(\underline{V})$ and any monomial tensor product $v_1 \otimes v_2 \otimes \cdots \otimes v_r \in \underline{V}^{\otimes r}$.

In the meanwhile, $(\underline{V}^{\otimes r},\Psi)$ naturally becomes a representation of $\frak S_r$ with following permutation action
\begin{equation}\label{left action}
\Psi(\sigma)(v_1 \otimes v_2 \otimes \cdots \otimes v_r):=v_{\sigma^{-1}(1)} \otimes v_{\sigma^{-1}(2)} \otimes \cdots \otimes v_{\sigma^{-1}(r)}.
\end{equation}
for any $\sigma \in \frak S_r$ and any monomial tensor product $v_1 \otimes v_2 \otimes \cdots \otimes v_r \in \underline{V}^{\otimes r}$.

The classical Schur-Weyl duality implies that the images of $\Phi$ and $\Psi$ are double centralizers in $\text{End}_{\mathbb{C}}(\underline{V}^{\otimes r})$, namely
\begin{equation}\label{CSWD}
\text{End}_{\GL(\underline{V})}(V^{\otimes r})=\mathbb{C}\Psi(\frak S_r), \text{    }
\text{End}_{\frak S_r}(\underline{V}^{\otimes r})=\mathbb{C}\Phi(\frak \GL(\underline{V})).
\end{equation}
Here $\mathbb{C}\Psi(\frak S_r)$ and $\mathbb{C}\Phi(\frak \GL(\underline{V}))$ are the subalgebras of $\text{End}_{\mathbb{C}}(\underline{V}^{\otimes r})$ that generated by $\Psi(\frak S_r)$ and $\Phi(\frak \GL(\underline{V}))$ respectively.

For any $m \in \mathbb{Z}_{>0}$, we define $\underline{m}:=\{1,2,\cdots,m\}$. Suppose $V=\bigoplus_{i=1}^n \mathbb{C}\eta_i$, then $\{\eta_1,\cdots,\eta_n,\eta\}$ is a basis of $\underline{V}$. Clearly, all $\eta_\textbf{i}:=\eta_{i_1} \otimes \cdots \otimes \eta_{i_r}$ for $\textbf{i}=\{i_1,\cdots,i_r\} \in (\underline{n+1})^r$ (i.e. $1 \leq i_1,\cdots i_r \leq n+1$) form a basis of $\underline{V}^{\otimes r}$ where $\eta_{n+1}:=\eta$. For any $I \subseteq \underline{r}$, if $i \in I$, we set $V_i:=V$, otherwise $V_i:=\mathbb{C}\eta$. Now we define a subspace $\underline{V}_I^{\otimes r}$ of $\underline{V}^{\otimes r}$ as $\underline{V}_I^{\otimes r}:=V_1\otimes \cdots \otimes V_r$. In particular, $\underline{V}_\phi^{\otimes r}=\mathbb{C}\eta\otimes \cdots \otimes \mathbb{C}\eta=\mathbb{C}\eta\otimes \cdots \otimes \eta$ with $r$ copies (Here $\phi$ is the empty set). Suppose $I=\{i_1,\cdots,i_l\} $ where $i_1<i_2<\cdots<i_l$, it is not hard to see that $\underline{V}_I^{\otimes r}$ has a basis as follows 
\begin{align}\label{space basis}
\{\eta_\textbf{j}:=\eta_{j_1} \otimes \cdots \otimes \eta_{j_r} \mid j_{i_k} \in \underline{n},k=1,\cdots,l;j_{d}=n+1 \text{ for } d \neq i_k\}.
\end{align}
Let $\underline{V}_l^{\otimes r}:=\bigoplus\limits_{I \subseteq \underline{r},~ \sharp I=l}\underline{V}_I^{\otimes r}$, thus $\underline{V}^{\otimes r}$ has a decomposition as a linear space $\underline{V}^{\otimes r}=\bigoplus_{l=0}^r \underline{V}_l^{\otimes r}$. 

Clearly, each $\underline{V}^{\otimes r}_l$ is stabilized under the action of $\frak S_r$. Thus this action gives rise to a representation of $\frak S_r$ on $\underline{V}^{\otimes r}_l$, denoted by $\Psi|_l$. Similarly, each $\underline{V}^{\otimes r}_l$ and $\underline{V}^{\otimes r}_I$ with $0 \leq l \leq r, I \subseteq \underline{r}$ are stabilized under the action of $\GL(V) \times \textbf{G}_m$. Hence they give rise to the representations of $\GL(V) \times \textbf{G}_m$ on $\underline{V}^{\otimes r}_l$ and $\underline{V}^{\otimes r}_I$, denoted by $\Phi|_l$ and $\Phi|_I$ respectively.

By classical Schur-Weyl duality, we have
\begin{equation}\label{SWD}
\text{End}_{\GL(V)}(V^{\otimes r})=\mathbb{C}\Psi|_r(\frak S_r), \text{    }
\text{End}_{\frak S_r}(V^{\otimes r})=\mathbb{C}\Phi|_r(\frak \GL(V)).
\end{equation}
Here $\mathbb{C}\Psi|_r(\frak S_r)$ and $\mathbb{C}\Phi|_r(\frak \GL(V))$ are the subalgebras of $\text{End}_{\mathbb{C}}(V^{\otimes r})$ that generated by $\Psi|_r(\frak S_r)$ and $\Phi|_r(\frak \GL(V))$ respectively.

\subsection{Tensor Invariants of $\text{O}(V)$ and $\text{Sp}(V)$.}\label{1.3} From now on until the end of this chapter, we always assume that $G=\text{O}(V)$ or $G=\text{Sp}(V)$ (In this case, $\dim V$ is even), i.e. $G$ is of type $B$, $C$ or $D$. The above discussion shows that ${V}^{\otimes r}$ becomes a $G$-module via the representation $\Phi|_r$. In general, ${V}^{\otimes k}$ is a $G$-module with $k \in \mathbb{Z}_{>0}$.

Suppose $G$ leaves invariant a nondegenerate bilinear form $\omega$ on $V$. Fix a basis $\{f_p\}_{1 \leq p \leq n}$ for $V$ and let $\{f^p\}_{1 \leq p \leq n}$ be the dual basis for $V$ relative to the form $\omega$. Thus $\omega(f_p,f^q)=\delta_{pq}$.

Let $\mathcal{B}_r(\varepsilon n):=\text{End}_G(V^{\otimes r})$ where
\begin{align*}
\varepsilon=
\begin{cases}
1& \omega \text{ is symmetric,}\\
-1& \omega \text{ is skew-symmetric.}
\end{cases}
\end{align*}
Clearly, $\mathcal{B}_r(\varepsilon n)$ is a subalgebra of $\text{End}_\mathbb{C}(V^{\otimes r})$.

For any pair $1 \leqslant i \text{ } \textless \text{ } j \leqslant r$ we define the $ij$-contraction operator $C_{ij}: V^{\otimes r} \rightarrow V^{\otimes (r-2)}$ by 
\begin{align}\label{C action}
C_{ij}(v_1 \otimes...\otimes v_r):=\omega(v_i,v_j)v_1\otimes...\otimes \hat{v_i} \otimes...\otimes \hat{v_j} \otimes...\otimes v_r
\end{align}
and the $ij$-expansion operator $D_{ij}:V^{\otimes (r-2)} \rightarrow V^{\otimes r}$ by 
\begin{align}\label{D action}
D_{ij}(v_1 \otimes...\otimes v_{r-2}):=\sum_{p=1}^{n}v_1\otimes...\otimes \underbrace{f_p}_\text{i th} \otimes...\otimes \underbrace{f^p}_\text{j th} \otimes...\otimes v_{r-2}.
\end{align}
Set $\tau_{ij}=D_{ij}C_{ij} \in \text{End}_G(V^{\otimes r})$. If $u=v_1 \otimes...\otimes v_r$ with $v_i \in V$, the definition implies that 
\begin{align}\label{tau action}
\tau_{ij}(u)=\omega(v_i,v_j)\sum_{p=1}^{n}v_1\otimes...\otimes \underbrace{f_p}_\text{i th} \otimes...\otimes \underbrace{f^p}_\text{j th} \otimes...\otimes v_r.
\end{align}

Set $z_k:=\tau_{k,k+1}$ and $s_i$ is identified with the transposition $(i,i+1)$ in $\frak S_r$ with $1 \leq i \leq {r-1}$, then $\mathcal{B}_r(\varepsilon n)$ is generated by $\Psi|_r(s_1),...,\Psi|_r(s_{r-1})$ and $z_1,...,z_{r-1}$. Define $P_{r-1}:=\frac{1}{n}z_{r-1}$. We have the following results.

\begin{lemma}(\cite[Lemma 10.1.5]{GW})\label{GW1}
	We have $\tau_{ij}^2=n\tau_{ij}$ with $1 \leq i \le j \leq r$, thus $P_{r-1}^2=P_{r-1}$.
\end{lemma}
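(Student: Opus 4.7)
The plan is to factor the composition $\tau_{ij}^2 = D_{ij}C_{ij}D_{ij}C_{ij}$ and show that the middle piece $C_{ij}D_{ij}$ collapses to the scalar $n$ on $V^{\otimes(r-2)}$. Once that reduction is established, the identity $\tau_{ij}^2 = nD_{ij}C_{ij} = n\tau_{ij}$ is immediate, and the idempotency of $P_{r-1}$ follows from the definition.

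First, I would apply $D_{ij}$ followed by $C_{ij}$ to a pure tensor $v_1 \otimes \cdots \otimes v_{r-2}$. Using \eqref{D action}, the image under $D_{ij}$ is $\sum_{p=1}^{n} v_1 \otimes \cdots \otimes f_p \otimes \cdots \otimes f^p \otimes \cdots \otimes v_{r-2}$, with $f_p$ sitting in the $i$-th slot and $f^p$ in the $j$-th slot. Applying $C_{ij}$ via \eqref{C action} removes those two slots and multiplies by the bilinear pairing $\omega(f_p, f^p)$. Since $\{f^p\}$ is defined to be the dual basis of $\{f_p\}$ with respect to $\omega$, we have $\omega(f_p, f^p) = \delta_{pp} = 1$, and summing over $p = 1, \dots, n$ yields the factor $n$. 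Thus
\begin{equation*}
C_{ij}D_{ij} = n \cdot \id_{V^{\otimes(r-2)}}.
\end{equation*}

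Next, I would substitute this into $\tau_{ij}^2 = D_{ij}(C_{ij}D_{ij})C_{ij}$ to obtain $\tau_{ij}^2 = n D_{ij} C_{ij} = n\tau_{ij}$, as desired. For the second assertion, the definition $P_{r-1} = \tfrac{1}{n} z_{r-1} = \tfrac{1}{n}\tau_{r-1,r}$ together with the first part gives $P_{r-1}^2 = \tfrac{1}{n^2}\tau_{r-1,r}^2 = \tfrac{1}{n}\tau_{r-1,r} = P_{r-1}$.

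There is no real obstacle here: the only subtle point is making sure the dual basis convention in \S\ref{1.3} really does give $\omega(f_p, f^p) = 1$ independently of whether $\omega$ is symmetric or skew-symmetric, so that the scalar $\sum_p \omega(f_p, f^p) = n$ holds uniformly in both the orthogonal and symplectic cases. Once that is clarified, the computation is purely formal and requires no case distinction on $\varepsilon$.
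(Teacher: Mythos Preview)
Your argument is correct and is exactly the standard computation: the paper itself gives no proof of this lemma, merely citing \cite[Lemma 10.1.5]{GW}, and what you have written is essentially the proof found there. The key identity $C_{ij}D_{ij}=n\cdot\id_{V^{\otimes(r-2)}}$ follows directly from the dual-basis convention $\omega(f_p,f^q)=\delta_{pq}$ stated in \S\ref{1.3}, so your concern about the sign of $\omega$ is unfounded---the convention is set up precisely so that no case distinction on $\varepsilon$ is needed.
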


\begin{remark}
	This lemma implies that $\text{ker}(C_{ij})=\text{ker}(\tau_{ij})$ with $1 \leq i \le j \leq r$.
\end{remark}

\begin{corollary}(\cite[Theorem 10.1.6]{GW})\label{cor 1.3}
	The algebra $\mathcal{B}_r(\varepsilon n)$ is generated by the operators $\Psi|_r(s)$ for $s \in \frak S_r$ and the projection $P_{r-1}$.
\end{corollary}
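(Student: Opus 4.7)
The plan is to take as input the Brauer-type generation statement recorded in the paragraph just before Lemma \ref{GW1}, namely that $\mathcal{B}_r(\varepsilon n)$ is generated by the adjacent transposition images $\Psi|_r(s_1),\ldots,\Psi|_r(s_{r-1})$ together with the contraction-expansion operators $z_1=\tau_{1,2},\ldots,z_{r-1}=\tau_{r-1,r}$. Let $\mathcal{A}$ denote the subalgebra of $\text{End}_G(V^{\otimes r})$ generated by $\{\Psi|_r(s)\mid s\in \frak S_r\}$ and $P_{r-1}$. Since $\mathcal{A}$ already contains every $\Psi|_r(s_i)$ by construction, it suffices to show that $z_k\in\mathcal{A}$ for each $1\le k\le r-1$.

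First, by the very definition $P_{r-1}=\tfrac{1}{n}z_{r-1}$, so $z_{r-1}=nP_{r-1}\in\mathcal{A}$. For $1\le k\le r-2$, I would pick the permutation $\sigma:=(k,r-1)(k+1,r)\in\frak S_r$ and verify the conjugation identity
$$\Psi|_r(\sigma)\,z_k\,\Psi|_r(\sigma)^{-1}=z_{r-1}.$$
This is a routine unpacking of (\ref{tau action}): conjugation by $\Psi|_r(\sigma)$ simply permutes the tensor factors via \eqref{left action}, and because $\tau_{ij}$ reads its input only through $\omega(v_i,v_j)$ in slots $i,j$ and writes only into slots $i,j$, conjugation by $\sigma$ shifts the two distinguished slots from $\{k,k+1\}$ to $\{r-1,r\}$ while leaving the other factors untouched. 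Rearranging then gives
$$z_k=n\,\Psi|_r(\sigma)^{-1}P_{r-1}\Psi|_r(\sigma)\in\mathcal{A},$$
which completes the argument.

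The only subtle point I anticipate is the sensitivity of $\tau_{ij}$ to the order of its two slots: if the chosen $\sigma$ were to exchange $k$ with $k+1$, a sign $\varepsilon$ coming from $\omega$ might enter. But the particular $\sigma=(k,r-1)(k+1,r)$ preserves the ordering $k<k+1\mapsto r-1<r$, so the identity $\Psi|_r(\sigma)\tau_{k,k+1}\Psi|_r(\sigma)^{-1}=\tau_{r-1,r}$ holds on the nose, independent of whether $\omega$ is symmetric or skew-symmetric. Hence the main ``obstacle'' is merely a bookkeeping check of the action on an arbitrary monomial $v_1\otimes\cdots\otimes v_r$, and the corollary follows at once from the Brauer generation theorem together with Lemma \ref{GW1}.
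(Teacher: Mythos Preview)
Your argument is correct. The paper does not supply its own proof of this corollary; it is simply quoted from \cite[Theorem~10.1.6]{GW}, and the surrounding text only records the generation by $\Psi|_r(s_1),\ldots,\Psi|_r(s_{r-1}),z_1,\ldots,z_{r-1}$ (the input you use) and the idempotency $P_{r-1}^2=P_{r-1}$ (Lemma~\ref{GW1}). What you wrote is exactly the standard reduction that Goodman--Wallach perform: since $z_{r-1}=nP_{r-1}$ and every $z_k$ is a $\frak S_r$-conjugate of $z_{r-1}$ via the permutation sending $\{k,k+1\}$ to $\{r-1,r\}$, the generating set collapses to $\{\Psi|_r(s):s\in\frak S_r\}\cup\{P_{r-1}\}$. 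Your care about the potential sign $\varepsilon$ when $\omega$ is skew is well placed and correctly resolved by choosing $\sigma$ so that $\sigma(k)<\sigma(k+1)$; note that for $k=r-2$ the two transpositions $(k,r-1)$ and $(k+1,r)$ are not disjoint, but your only requirement is $\sigma(k)=r-1$, $\sigma(k+1)=r$, which still holds.
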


\begin{proposition}\label{B dual}
For $G=\text{O}(V)$ or $\text{Sp}(V)$, the following duality holds:
\begin{equation}\label{B dual 1}
\text{End}_G(V^{\otimes r})=\mathcal{B}_r(\varepsilon n),
\end{equation}
\begin{equation}\label{B dual 2}
\text{End}_{\mathcal{B}_r(\varepsilon n)}(V^{\otimes r})=\mathbb{C}\Phi|_r(G).
\end{equation}
Here $\mathbb{C}\Phi|_r(G)$ is the subalgebras of $\text{End}_{\mathbb{C}}(V^{\otimes r})$ that generated by $\Phi|_r(G)$.
\end{proposition}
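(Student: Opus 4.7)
The plan is to prove the two equalities separately using classical invariant theory. For (1), I would establish the two inclusions. The containment $\mathcal{B}_r(\varepsilon n) \subseteq \text{End}_G(V^{\otimes r})$ is immediate from the construction: since $\omega$ is $G$-invariant, the contraction operators $C_{ij}$ commute with the diagonal $G$-action; the canonical element $\sum_p f_p \otimes f^p \in V\otimes V$ (the image of $\id_V$ under $\text{End}(V)\cong V\otimes V^*\cong V\otimes V$ induced by $\omega$) is likewise $G$-invariant, so the expansion operators $D_{ij}$, and hence $\tau_{ij}$, commute with $G$. Together with the obvious fact that $\Psi|_r(\frak S_r)$ commutes with the diagonal $G$-action, and Corollary \ref{cor 1.3}, this gives the inclusion.

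The reverse inclusion $\text{End}_G(V^{\otimes r}) \subseteq \mathcal{B}_r(\varepsilon n)$ is the substantive direction. My approach is to use the nondegenerate $G$-invariant form $\omega$ to identify $V\cong V^*$ as $G$-modules, yielding a $G$-module isomorphism
\begin{equation*}
\text{End}(V^{\otimes r}) \;\cong\; V^{\otimes r}\otimes (V^{\otimes r})^* \;\cong\; V^{\otimes 2r}.
\end{equation*}
Under this identification, $G$-equivariant endomorphisms correspond to $G$-invariants in $V^{\otimes 2r}$. The First Fundamental Theorem of invariant theory for $\text{O}(V)$ (resp.\ $\text{Sp}(V)$) asserts that $(V^{\otimes 2r})^G$ is spanned by products of $\omega$-factors indexed by pairings of the $2r$ tensor slots, i.e.\ by Brauer $(r,r)$-diagrams. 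Transporting back, each such invariant is realized as a composition of a permutation and a sequence of contraction-expansion operators, and so lies in $\mathcal{B}_r(\varepsilon n)$.

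For (2), the plan is to invoke the double centralizer theorem. Since $G$ is a reductive algebraic group over $\bbc$, the rational representation $V^{\otimes r}$ is completely reducible, so $\bbc\Phi|_r(G)$ is a semisimple subalgebra of $\text{End}_{\bbc}(V^{\otimes r})$. The classical double centralizer theorem then yields
\begin{equation*}
\text{End}_{\text{End}_G(V^{\otimes r})}(V^{\otimes r}) \;=\; \bbc\Phi|_r(G),
\end{equation*}
and substituting (1) gives \eqref{B dual 2}.

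The main obstacle is the First Fundamental Theorem for the orthogonal and symplectic groups. The translation between $G$-invariants in $V^{\otimes 2r}$ and diagrammatic operators on $V^{\otimes r}$ is bookkeeping once the FFT is in hand, and the double centralizer step is formal; but the FFT itself is a deep classical input which I would quote rather than reprove (e.g.\ from the treatment in \cite{GW} that already underlies Lemma \ref{GW1} and Corollary \ref{cor 1.3}).
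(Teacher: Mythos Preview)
Your approach to (2) via the double centralizer theorem is exactly what the paper does (it cites \cite[Theorem 4.1.13]{GW}), so that part is fine.

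For (1), however, you have misread the definition. In \S\ref{1.3} the paper \emph{defines} $\mathcal{B}_r(\varepsilon n):=\text{End}_G(V^{\otimes r})$, so \eqref{B dual 1} is a tautology and the paper's proof says exactly that. The nontrivial content you are proposing to prove --- that the algebra generated by $\Psi|_r(\frak S_r)$ and the $\tau_{ij}$ coincides with $\text{End}_G(V^{\otimes r})$ --- is not the content of this proposition; it is the content of Corollary~\ref{cor 1.3} and Lemma~\ref{Brauer span}, which the paper simply quotes from \cite{GW}. Your argument via the FFT would indeed establish those results, but here you only need to observe that (1) holds by definition.
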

\begin{proof}
	Obviously, (\ref{B dual 1}) is from the definition of $\mathcal{B}_r(\varepsilon n)$. Since $G$ is reductive, \cite[Definition 3.3.1]{GW} and \cite[Theorem 4.1.13]{GW} implies (\ref{B dual 2}).
\end{proof}

Consider the set $X_r$ of all graphs obtained from the two rows of dots by connecting each dot with exactly one other dot. We call an element of $X_r$ a Brauer diagram. Let $x \in X_r$ and let $t$ be the number of edges in the diagram of $x$ that connect a dot in the top row with another dot in the top row (call such an edge a top bar).  The bottom row of $x$ also has $t$ analogous such edges (call them bottom bars) and we call $x$ an $t$-bar diagram. A Brauer diagram $x$ is called a normalized diagrams, if all the edges of $x$ connecting the top and bottom rows are vertical. We can use top bars (or bottom bars) to express the normalized Brauer diagrams. Hence a normalized $t$-bar Brauer diagrams can be expressed as $\{i_1,j_1\},...,\{i_t,j_t\}$.

$$
\put(-15,10){\tiny$\bullet$}\put(-15,38){\tiny$\bullet$}
\put(-30,45){\small$1$}\put(-30,0){\small$2$}
\put(-15,45){\small$3$}\put(-15,0){\small$4$}
\put(-30,10){\tiny$\bullet$}\put(-30,38){\tiny$\bullet$}
\put(0,23){$\cdots\cdots$}
\put(40,45){\small$2r-1$}\put(40,0){\small$2r$}
\put(40,10){\tiny$\bullet$}\put(40,38){\tiny$\bullet$}
\put(-40,-25){\small Figure 1. A two-row array.}
$$
\\[-10pt]

$$
\put(-10,10){\tiny$\bullet$}\put(-10,38){\tiny$\bullet$}
\put(-30,10){\tiny$\bullet$}\put(-30,38){\tiny$\bullet$}
\put(10,10){\tiny$\bullet$}\put(10,38){\tiny$\bullet$}
\put(30,10){\tiny$\bullet$}\put(30,38){\tiny$\bullet$}
\put(50,10){\tiny$\bullet$}\put(50,38){\tiny$\bullet$}
\put(-30,45){\small$1$}\put(-30,0){\small$2$}
\put(-10,45){\small$3$}\put(-10,0){\small$4$}
\put(10,45){\small$5$}\put(10,0){\small$6$}
\put(30,45){\small$7$}\put(30,0){\small$8$}
\put(50,45){\small$9$}\put(50,0){\small$10$}
\put(-30,30){\Huge$\smile$}
\put(30,30){\Huge$\smile$}
\put(-10,10){\Huge$\frown$}
\put(30,10){\Huge$\frown$}
\put(-28.5,10.7){\line(4,3){40}}
\put(-80,-25){\small Figure 2. A Brauer diagram with $r=5$.}
$$\\[-10pt]

Let $Z_{r,t}$ be the set of normalized $t$-bar Brauer diagrams in $X_r$, and set $Z_r=\bigcup\limits_{t=0}^{[r/2]}Z_{r,t}$. Clearly, $Z_r$ is the set that consists of all the normalized Brauer diagrams with $2r$ dots, i.e. $Z_r$ contains all the normalized Brauer diagrams in $X_r$. Let $x_0 \in X_r$ be the graph with each dot in the top row connected with the dot below it, then $Z_{r,0}=\{x_0\}$.

$$
\put(-10,10){\tiny$\bullet$}\put(-10,38){\tiny$\bullet$}
\put(-30,10){\tiny$\bullet$}\put(-30,38){\tiny$\bullet$}
\put(10,10){\tiny$\bullet$}\put(10,38){\tiny$\bullet$}
\put(30,10){\tiny$\bullet$}\put(30,38){\tiny$\bullet$}
\put(50,10){\tiny$\bullet$}\put(50,38){\tiny$\bullet$}
\put(-30,45){\small$1$}\put(-30,0){\small$2$}
\put(-10,45){\small$3$}\put(-10,0){\small$4$}
\put(10,45){\small$5$}\put(10,0){\small$6$}
\put(30,45){\small$7$}\put(30,0){\small$8$}
\put(50,45){\small$9$}\put(50,0){\small$10$}
\put(-28.5,10.7){\line(0,1){30}}
\put(-8.5,10.7){\line(0,1){30}}
\put(12.5,10.7){\line(0,1){30}}
\put(32.5,10.7){\line(0,1){30}}
\put(52.5,10.7){\line(0,1){30}}
\put(-80,-25){\small Figure 3. Diagram for $x_0$ with $r=5$.}
$$\\[-10pt]

Now we list some useful results in \cite{GW} as follows.

\begin{lemma}(\cite[Lemma 10.1.2]{GW})\label{Brauer cal}
	Suppose that $z=\{i_1,j_1\},...,\{i_t,j_t\} \in X_r$ is a normalized $t$-bar Brauer diagram. Then
	\begin{align*}
	\tau_{i_pj_p}\tau_{i_qj_q}=\tau_{i_qj_q}\tau_{i_pj_p}, \text{ for } p \neq q.
	\end{align*}
	Thus the operator $\tau_z:=\prod_{p=1}^{t}\tau_{i_pj_p} \in \mathcal{B}_r(\varepsilon n)$ is defined independently of the order of the product.
\end{lemma}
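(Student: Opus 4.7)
The plan is to prove the commutativity by a direct computation on basis tensors, exploiting the crucial combinatorial feature of normalized Brauer diagrams. Since $z$ is a \emph{normalized} $t$-bar diagram, the top bars $\{i_1,j_1\},\ldots,\{i_t,j_t\}$ constitute a partial matching on the top row: in particular, for $p\ne q$ the four indices $i_p,j_p,i_q,j_q$ are pairwise distinct. This disjointness is the only property of $z$ that will actually be used.

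With that in hand, the argument reduces to showing: whenever $\{i,j\}\cap\{k,l\}=\emptyset$, the operators $\tau_{ij}$ and $\tau_{kl}$ commute as endomorphisms of $V^{\otimes r}$. I would verify this by evaluating both sides on an arbitrary pure tensor $u=v_1\otimes\cdots\otimes v_r$ using the explicit formula \eqref{tau action}. Applying $\tau_{kl}$ first produces a scalar factor $\omega(v_k,v_l)$ together with a sum over $q$ that replaces the entries in positions $k$ and $l$ by $f_q$ and $f^q$, while leaving positions $i,j$ (and all the others) untouched. Then applying $\tau_{ij}$ reads off $\omega(v_i,v_j)$ from the unaltered positions $i,j$ and superimposes a sum over $p$ inserting $f_p,f^p$ there. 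The result is the double sum
\begin{align*}
\tau_{ij}\tau_{kl}(u)=\omega(v_i,v_j)\,\omega(v_k,v_l)\sum_{p,q} v_1\otimes\cdots\otimes \underbrace{f_p}_{i}\otimes\cdots\otimes \underbrace{f^p}_{j}\otimes\cdots\otimes \underbrace{f_q}_{k}\otimes\cdots\otimes \underbrace{f^q}_{l}\otimes\cdots\otimes v_r,
\end{align*}
and the same expression is obtained from $\tau_{kl}\tau_{ij}(u)$ by symmetry of the roles of $(i,j)$ and $(k,l)$. Hence the two operators coincide.

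Finally, once pairwise commutativity is established, the definition of $\tau_z=\prod_{p=1}^{t}\tau_{i_pj_p}$ is independent of the order of the factors: any two permutations of the product differ by finitely many transpositions of adjacent commuting operators. The containment $\tau_z\in\mathcal{B}_r(\varepsilon n)$ is automatic since each $\tau_{i_pj_p}$ lies in $\mathrm{End}_G(V^{\otimes r})=\mathcal{B}_r(\varepsilon n)$ by Proposition \ref{B dual}. There is no genuine obstacle here: the only point requiring care is the bookkeeping of tensor positions and the observation that disjointness of $\{i,j\}$ and $\{k,l\}$ is exactly what allows the two independent summations $\sum_p$ and $\sum_q$ to be interchanged.
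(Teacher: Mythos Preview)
Your proposal is correct. The paper itself supplies no proof of this lemma---it is quoted directly from \cite[Lemma 10.1.2]{GW}---and your direct computation on pure tensors, using the disjointness of $\{i_p,j_p\}$ and $\{i_q,j_q\}$ in a normalized diagram, is precisely the standard argument one finds in that reference.
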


Note that $\tau_{x_0}=1=\text{id}_{V^{\otimes r}}$. 

\begin{lemma}(\cite[Propsition 10.1.3]{GW})\label{Brauer span}
	Let $n={\rm{dim}}V$. The algebra $\mathcal{B}_r(\varepsilon n)$ is spanned by the set of operators $\Psi|_r(s)\tau_z$ with $s \in \frak S_r$ and $z \in Z_r$, i.e. 
	\begin{align*}
	\mathcal{B}_r(\varepsilon n)=\sum_{s \in \frak S_r}\sum_{z \in Z_r}\mathbb{C}\Psi|_r(s)\circ\tau_z.
	\end{align*}
\end{lemma}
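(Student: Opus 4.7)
The plan is to bootstrap off Corollary \ref{cor 1.3}, which already provides a small generating set: $\mathcal{B}_r(\varepsilon n)$ is generated by $\Psi|_r(\frak S_r)$ together with the single projection $P_{r-1}=\frac{1}{n}\tau_{r-1,r}$. Hence it suffices to show that every monomial in these generators lies in the $\mathbb{C}$-span of the proposed set $\{\Psi|_r(s)\tau_z \mid s \in \frak S_r,\ z \in Z_r\}$.

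The first ingredient I would establish is the intertwining identity
\begin{align*}
\Psi|_r(s)\,\tau_{ij}\,\Psi|_r(s)^{-1}=\tau_{s(i),s(j)}, \qquad s\in\frak S_r,\ 1\le i<j\le r,
\end{align*}
which is a direct consequence of (\ref{C action})--(\ref{D action}) together with the fact that the contraction pairing $\omega$ and the element $\sum_p f_p\otimes f^p$ only care about the tensor positions being contracted. In particular every $\tau_{ij}$ is obtained from $\tau_{r-1,r}$ by conjugation by a suitable $\Psi|_r(\pi)$, so $\mathcal{B}_r(\varepsilon n)$ is equally generated by $\Psi|_r(\frak S_r)\cup\{\tau_{ij}\mid 1\le i<j\le r\}$. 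Using the intertwining identity repeatedly to push symmetric-group factors to the left, any monomial in these generators is brought into the form
\begin{align*}
\Psi|_r(s)\cdot\tau_{i_1,j_1}\tau_{i_2,j_2}\cdots\tau_{i_k,j_k}
\end{align*}
for some $s\in\frak S_r$ and some sequence of index pairs.

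I would then proceed by induction on $k$ to show that any such product of $\tau$-operators equals $c\,\Psi|_r(\sigma)\tau_z$ for some scalar $c\in\mathbb{C}$, some $\sigma\in\frak S_r$ and some $z\in Z_r$. The base case $k=0$ gives $\id=\tau_{x_0}$ with $x_0\in Z_{r,0}$. For the inductive step, rewrite the product as $\tau_{i,j}\cdot(c\,\Psi|_r(\sigma)\tau_z)$ and commute $\Psi|_r(\sigma)$ to the outside via the intertwining identity, so that matters reduce to analyzing a single product $\tau_{i',j'}\tau_z$. Diagrammatically this amounts to stacking one cup-cap bar onto the normalized diagram $z$ and straightening the result; every internal loop produced contributes a scalar factor $n$ (the operator content of Lemma \ref{GW1}), and the remaining Brauer diagram, once decomposed as a permutation of through-strands composed with a normalized diagram, puts the product in the required form.

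The main obstacle is precisely this inductive step: one must carefully enumerate the geometric cases according to whether $\{i',j'\}$ is disjoint from the top-bars of $z$, shares exactly one endpoint with one of them, or is contained in the union of two such top-bars (and, in each sub-case, how the endpoints are paired in $z$), and then verify that the operator identity $\tau_{i',j'}\tau_z=c\,\Psi|_r(\sigma')\tau_{z'}$ matches the diagrammatic composition. Once this case analysis is dispatched the induction closes, and the spanning assertion follows immediately.
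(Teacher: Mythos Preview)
The paper does not supply its own proof of this lemma; it is quoted directly as \cite[Proposition 10.1.3]{GW}, so there is no in-paper argument to compare against.

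That said, your proposed route has a circularity problem. You bootstrap from Corollary~\ref{cor 1.3}, which is \cite[Theorem 10.1.6]{GW}. In Goodman--Wallach the logical order is the reverse of the paper's numbering: Proposition~10.1.3 (the spanning statement you are trying to prove) is established first, via the First Fundamental Theorem of invariant theory, and only afterwards is Theorem~10.1.6 deduced from it. So invoking Corollary~\ref{cor 1.3} here assumes the result you want. The inductive diagrammatic calculation you outline---pushing permutations to the left and straightening $\tau_{i',j'}\tau_z$ into $c\,\Psi|_r(\sigma')\tau_{z'}$---is correct as an internal computation inside the subalgebra generated by $\Psi|_r(\frak S_r)$ and $P_{r-1}$; what it does \emph{not} give you, without Corollary~\ref{cor 1.3}, is that this subalgebra exhausts $\mathcal{B}_r(\varepsilon n)=\text{End}_G(V^{\otimes r})$.

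The non-circular argument (which is what \cite{GW} actually does) proceeds through the $G$-equivariant identification $\text{End}_G(V^{\otimes r})\cong(V^{\otimes 2r})^G$ induced by the form $\omega$, together with the First Fundamental Theorem for $\text{O}(V)$ or $\text{Sp}(V)$: the invariant space $(V^{\otimes 2r})^G$ is spanned by complete contractions indexed by perfect matchings of $2r$ points, i.e.\ by Brauer diagrams in $X_r$, and transporting these back through the isomorphism yields precisely the operators $\Psi|_r(s)\tau_z$ with $s\in\frak S_r$, $z\in Z_r$.
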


\begin{lemma}(\cite[Corollary 10.1.4]{GW})\label{Brauer basis}
Suppose ${\rm{dim}}V=n \geq 2r$. Then the set $\{\Psi|_r(s)\tau_z \mid s \in \frak S_r, z \in Z_r \}$ is a basis for $\mathcal{B}_r(\varepsilon n)$, i.e.
\begin{align*}
\mathcal{B}_r(\varepsilon n)=\bigoplus_{s \in \frak S_r}\bigoplus_{z \in Z_r}\mathbb{C}\Psi|_r(s)\tau_z.
\end{align*}
\end{lemma}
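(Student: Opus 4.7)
By Lemma \ref{Brauer span} the set $\{\Psi|_r(s)\tau_z\}$ already spans $\mathcal{B}_r(\varepsilon n)$, so the task reduces to showing linear independence in the stable range $n\ge 2r$. My plan is to pass from operators on $V^{\otimes r}$ to invariant tensors in $V^{\otimes 2r}$ and invoke the First Fundamental Theorem (FFT) of invariant theory for $\text{O}(V)$ and $\text{Sp}(V)$, together with a stable-range linear-independence check.

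First I would use the nondegenerate form $\omega$ to build a $G$-equivariant isomorphism $V\cong V^*$, which yields the standard identification $\text{End}(V^{\otimes r})\cong V^{\otimes r}\otimes (V^{\otimes r})^*\cong V^{\otimes 2r}$ and, by Proposition \ref{B dual}, sends $\mathcal{B}_r(\varepsilon n)$ onto the invariant subspace $(V^{\otimes 2r})^G$. A direct computation from \eqref{tau action} and \eqref{left action} shows that each $\Psi|_r(s)\tau_z$ corresponds, up to an $\varepsilon$-dependent sign, to a contraction tensor $\Theta_{x(s,z)}\in V^{\otimes 2r}$, where $x(s,z)\in X_r$ is the Brauer diagram obtained by drawing the bars of $z$ on both the top and bottom rows and connecting the remaining top dots to the bottom dots according to $s$.

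Next, by Weyl's FFT, $(V^{\otimes 2r})^G$ is spanned by the contraction tensors $\{\Theta_x:x\in X_r\}$, so it suffices to prove their linear independence when $n\ge 2r$. The classical device is to exhibit, for each $x'\in X_r$, a companion tensor $w_{x'}\in V^{\otimes 2r}$ whose $2r$ factors are labeled by $2r$ distinct basis vectors of $V$ running along the strands of $x'$ (this is precisely where the hypothesis $n\ge 2r$ enters). Computing the pairing $\langle \Theta_x, w_{x'}\rangle$ via the contraction rules and ordering the Brauer diagrams by, say, the number of coincident strands upon superimposing $x$ on $x'$, one finds the matrix $(\langle \Theta_x, w_{x'}\rangle)_{x,x'\in X_r}$ to be triangular with nonzero diagonal, yielding the required linear independence.

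The main obstacle I anticipate is the bookkeeping of the correspondence $(s,z)\leftrightarrow x(s,z)$. A normalized $t$-bar diagram $z$ has a stabilizer of order $2^{t}\,t!$ inside $\frak S_r$ that preserves its pair structure, so the set $\frak S_r\times Z_r$ actually overcounts Brauer diagrams by exactly this factor; restricting $s$ to a system of coset representatives for this stabilizer yields an honest bijection with $X_r$ of cardinality $(2r-1)!! = \dim(V^{\otimes 2r})^G$. Once this cleanup and the triangularity of the pairing matrix are in place, the dimension count combined with the spanning statement of Lemma \ref{Brauer span} closes the argument.
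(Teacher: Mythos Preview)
The paper does not give its own proof of this lemma; it is simply quoted from \cite[Corollary 10.1.4]{GW}. Your outline is essentially the argument found in that reference: use the form $\omega$ to identify $\text{End}_G(V^{\otimes r})$ with $(V^{\otimes 2r})^G$, recognize each operator $\Psi|_r(s)\tau_z$ (up to a power of $\varepsilon$) as the complete contraction tensor $\Theta_x$ attached to the Brauer diagram $x=x(s,z)\in X_r$, and then invoke the linear independence of the $\Theta_x$ in the stable range $n\ge 2r$, which is part of the FFT package in \cite[\S5.3]{GW}.

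Your final paragraph correctly identifies the overcounting, and your fix via coset representatives is one legitimate way to phrase it; the paper records the same fact in Remark \ref{Brauer dim}, noting that the \emph{set} of operators $\{\Psi|_r(s)\tau_z\}$ is already in bijection with $X_r$ and has cardinality $(2r-1)!!$ (so the direct-sum display in the lemma should be read over that set rather than literally over $\frak S_r\times Z_r$). With that bookkeeping in place, your pairing/triangularity argument for the independence of the $\Theta_x$ is a standard and valid route to finish.
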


\begin{remark}\label{Brauer dim}
	By \cite[Section 10.1.1]{GW} and \cite[Section 10.1.2]{GW}, there is a one-to-one correspondence between $\{\Psi|_r(s)\tau_z \mid s \in \frak S_r, z \in Z_r \}$ and $X_r$, namely $\Psi|_r(s)\tau_z$ ($s \in \frak S_r, z \in Z_r$) corresponds to a unique Brauer diagram with $2r$ dots. Thus $\sharp\{\Psi|_r(s)\tau_z \mid s \in \frak S_r, z \in Z_r \}=\sharp X_r=(2r-1)!!$, where $(2r-1)!!=1 \cdot 3 \cdots(2r-1)$. When $n \geq 2r$, we have ${\rm{dim}} (\mathcal{B}_r(\varepsilon n))=\sharp\{\Psi|_r(s)\tau_z \mid s \in \frak S_r, z \in Z_r \}=(2r-1)!!$.
\end{remark}

\subsection{Enhanced Brauer algebras.}\label{algebra B}
Fix $\sigma \in \text{End}_{\mathbb{C}}(V^{\otimes l})$ with $1 \leq l \leq r$, then $\sigma \otimes (\text{id}_{\mathbb{C}\eta})^{\otimes (r-l)}=\sigma \otimes \text{id}_{\mathbb{C}\eta^{\otimes (r-l)}} \in \text{End}_{\mathbb{C}}(\underline{V}_{\underline{l}}^{\otimes r})$. Next we extend $\sigma \otimes (\text{id}_{\mathbb{C}\eta})^{\otimes (r-l)}$ to an element $\sigma^{\underline{l}}$ of $\text{End}_{\mathbb{C}}(\underline{V}_l^{\otimes r})$ by annihilating any other summand $\underline{V}_I^{\otimes r}$ with $I \neq \underline{l}$.

In general, for $\eta_\textbf{j} \in \underline{V}_I^{\otimes r}$ with $\phi \neq I \subseteq \underline{r}$ and $\sharp I=l$, we can write
\begin{align*}
\eta_\textbf{j}=\Psi(\tau_I)\eta_{(k_1,...k_l,\underbrace{n+1,...,n+1}_\text{\textit{(r-l)} copies})} \text{ for some } (k_1,...,k_l) \in \underline{n}^l \text{ and }\tau_I \in \mathfrak{S}_r.
\end{align*}
  Then \begin{align}\label{def of action}
  \text{$\forall \sigma \in \text{End}_\mathbb{C}(V^{\otimes l})$ with $1 \leq l \leq r$,  define } \sigma^I:=\Psi(\tau_I) \circ \sigma^{\underline{l}} \circ \Psi(\tau_I^{-1}).
  \end{align}
  Clearly, $\sigma^I(\underline{V}_I^{\otimes r}) \subseteq \underline{V}_I^{\otimes r}$, hence $\sigma^I|_{\underline{V}_I^{\otimes r}} \in  \text{End}_\mathbb{C}{(\underline{V}_I^{\otimes r})}$. While for $J \subseteq \underline{r}$ with $J \neq I$, $\sigma^I(\underline{V}_J^{\otimes r})=0$. All ${\sigma}^I$ with $\sigma \in \mathcal{B}_l(\varepsilon n)$ and $1 \leq \sharp I=l \leq r$ constitute a subalgebra in $\text{End}_\mathbb{C}(\underline{V}^{\otimes r})$ which is denoted by $\mathcal{B}_l^{(I)}(\varepsilon n)$. 
Next we will define some subalgebras in $\text{End}_\mathbb{C}(\underline{V}^{\otimes r})$ that will be used in the following text.

\begin{defn}
	Let $\mathcal{B}(\varepsilon,n,r)$ be a subalgebra of $\text{End}_\mathbb{C}(\underline{V}^{\otimes r})$ generated by $\{\mathcal{B}_l^{(I)}(\varepsilon n) \mid  1 \leq l \leq r, I \subseteq \underline{r} \text{ satisfying } \sharp I=l\}$ and $\Psi(\frak S_r)$. Obviously, $\mathcal{B}(\varepsilon,n,r)$ is a finite dimensional associative algebra, which we call the Enhanced Brauer algebra over $\mathbb{C}$.
\end{defn}

 Similarly, we can get a subalgebra $\widetilde{\mathcal{B}}(\varepsilon,n,r)_l$ ($1 \leq l \leq r$) which is generated by $\{\sigma^I|_{\underline{V}_l^{\otimes r}} \mid \sigma \in \mathcal{B}_l(\varepsilon n),I \subseteq \underline{r} \text{ with } \sharp I=l\}$ and $\Psi|_l(\frak S_r)$. Let $\widetilde{\mathcal{B}}(\varepsilon,n,r)_0$ be an subalgebra spanned by (generated by) $\Psi|_0(\frak S_r)=\{\text{id}_{\mathbb{C}\eta^{\otimes r}}\}$, i.e. $\widetilde{\mathcal{B}}(\varepsilon,n,r)_0=\mathbb{C}\text{id}_{\mathbb{C}\eta^{\otimes r}}$.

Each element in $\widetilde{\mathcal{B}}(\varepsilon,n,r)_l$ ($0 \leq l \leq r$) can extend to an element in $\text{End}_\mathbb{C}(\underline{V}^{\otimes r})$ by annihilating any other summand $\underline{V}_k^{\otimes r}$ with $k \neq l$, thus we get a subalgebra of $\text{End}_\mathbb{C}(\underline{V}^{\otimes r})$ which is denoted by $\mathcal{B}(\varepsilon,n,r)_l$ ($0 \leq l \leq r$). Hence $\mathcal{B}(\varepsilon,n,r)_l$ ($1 \leq l \leq r$) is generated by $\{\sigma^I \mid  \sigma \in \mathcal{B}_l(\varepsilon n),  I \subseteq \underline{r} \text{ with } \sharp I=l\}$) and $\underline{\Psi}|_l(\frak S_r)$; $\mathcal{B}(\varepsilon,n,r)_0$ is spanned by (generated by) $\underline{\Psi}|_0(\frak S_r)$, i.e. $\mathcal{B}(\varepsilon,n,r)_0=\mathbb{C}\underline{\Psi}|_0(\frak S_r)$. Here $\underline{\Psi}|_l(\frak S_r) \subseteq \text{End}_\mathbb{C}(\underline{V}^{\otimes r})$ is the extension of $\Psi|_l(\frak S_r)$ by annihilating any other summand $\underline{V}_k^{\otimes r}$ with $k \neq l$. Let $\mathcal{B}_l^{(l)}(\varepsilon n)$ ($1 \leq l \leq r$) be a subalgebra of $\text{End}_\mathbb{C}(\underline{V}^{\otimes r})$ which is generated by $\mathcal{B}_l^{(I)}(\varepsilon n)$ with $I \subseteq \underline{r}$ and $\sharp I=l$.

For $1 \leq l \leq r$, let $\Psi_l: \frak S_l \rightarrow \GL(V^{\otimes l})$ with \begin{equation}\label{Sl}
\Psi_l(\sigma)(v_1 \otimes v_2 \otimes \cdots \otimes v_l)=v_{\sigma^{-1}(1)} \otimes v_{\sigma^{-1}(2)} \otimes \cdots \otimes v_{\sigma^{-1}(l)}.
\end{equation} Where $v_1, \cdots, v_l \in V$ and $\sigma \in  \frak S_l$. Then $\Psi_l$ is a representation of $\frak S_l$. Obviously, $\Psi_r=\Psi|_r$.

Denote by $D(n,r)$ the subalgebra of $\text{End}_\mathbb{C}(\underline{V}^{\otimes r})$ generated by $\Psi(\frak S_r)$ and $\{(\Psi_l(\sigma))^I \mid I \subseteq \underline{r}, \sigma \in \frak S_l \text{ with }l=\sharp I\}$. Let $D(n,r)_l$ ($1 \leq l \leq r$) be the subalgebra of $\text{End}_\mathbb{C}(\underline{V}^{\otimes r})$ generated by $\underline{\Psi}|_l(\frak S_r)$ and $\{(\Psi_l(\sigma))^I \mid I \subseteq \underline{r}\text{ with }\sharp I=l, \sigma \in \frak S_l\}$; while $D(n,r)_0$ is spanned by (generated by) $\underline{\Psi}|_0(\frak S_r)$, i.e. $D(n,r)_0=\mathbb{C}\underline{\Psi}|_0(\frak S_r)$. It is easy to see that $D(n,r)$ and $D(n,r)_l$ defined here are coincide with those defined in \cite[Definition 3.4]{BYY}. Hence \cite[Lemma 3.5(1)]{BYY} yields that $D(n,r)=\bigoplus_{l=0}^{r}D(n,r)_l$.

Keep in mind that $D(n,r)$ and $\mathcal{B}(\varepsilon,n,r)$ are subalgebras of $\text{End}_\mathbb{C}(\underline{V}^{\otimes r})$, hence $D(n,r)$ and $\mathcal{B}(\varepsilon,n,r)$ are finite dimensional associative algebras.

\begin{lemma}\label{B decom}
	$\mathcal{B}(\varepsilon,n,r)=\bigoplus\limits_{l=0}^r {\mathcal{B}}(\varepsilon,n,r)_l$, i.e. $\mathcal{B}(\varepsilon,n,r)=\bigoplus\limits_{l=0}^r \widetilde{\mathcal{B}}(\varepsilon,n,r)_l$.
\end{lemma}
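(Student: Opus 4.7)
The plan is to prove both inclusions and directness by exploiting the block structure of $\text{End}_\mathbb{C}(\underline{V}^{\otimes r})$ induced by the decomposition $\underline{V}^{\otimes r} = \bigoplus_{l=0}^r \underline{V}_l^{\otimes r}$. By construction, every element $x \in \mathcal{B}(\varepsilon,n,r)_l$ satisfies $x|_{\underline{V}_k^{\otimes r}} = 0$ for $k \neq l$ and $x(\underline{V}_l^{\otimes r}) \subseteq \underline{V}_l^{\otimes r}$, so the $\mathcal{B}(\varepsilon,n,r)_l$ sit in distinct diagonal blocks. This immediately gives the orthogonality relation $\mathcal{B}(\varepsilon,n,r)_l \cdot \mathcal{B}(\varepsilon,n,r)_k = 0$ whenever $l \neq k$, and forces the sum $\sum_{l=0}^r \mathcal{B}(\varepsilon,n,r)_l$ to be direct: if $\sum_l x_l = 0$ with $x_l \in \mathcal{B}(\varepsilon,n,r)_l$, then restricting to each $\underline{V}_k^{\otimes r}$ shows $x_k|_{\underline{V}_k^{\otimes r}} = 0$, hence $x_k = 0$.

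For the inclusion $\bigoplus_l \mathcal{B}(\varepsilon,n,r)_l \subseteq \mathcal{B}(\varepsilon,n,r)$, the generators $\sigma^I$ with $\sigma \in \mathcal{B}_l(\varepsilon n)$ and $\sharp I = l$ are already among the defining generators of $\mathcal{B}(\varepsilon,n,r)$, so the only real task is to exhibit $\underline{\Psi}|_l(\frak S_r)$ inside $\mathcal{B}(\varepsilon,n,r)$. Since $\text{id}_{V^{\otimes l}} \in \mathcal{B}_l(\varepsilon n)$, the element $(\text{id}_{V^{\otimes l}})^I$ belongs to the generating set of $\mathcal{B}(\varepsilon,n,r)$; by \eqref{def of action} it is precisely the projection $e_I$ onto the summand $\underline{V}_I^{\otimes r}$, extended by zero elsewhere. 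Setting $e_l := \sum_{\sharp I = l} e_I \in \mathcal{B}(\varepsilon,n,r)$ gives the projection onto $\underline{V}_l^{\otimes r}$; since $\Psi(\sigma)$ preserves each $\underline{V}_l^{\otimes r}$, one has $e_l \Psi(\sigma) = \Psi(\sigma) e_l = \underline{\Psi}|_l(\sigma)$, and therefore $\underline{\Psi}|_l(\sigma) \in \mathcal{B}(\varepsilon,n,r)$. All generators of $\mathcal{B}(\varepsilon,n,r)_l$ thus lie in $\mathcal{B}(\varepsilon,n,r)$.

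For the reverse inclusion, every generator of $\mathcal{B}(\varepsilon,n,r)$ already lies in $\sum_l \mathcal{B}(\varepsilon,n,r)_l$: each $\sigma^I$ with $\sharp I = l$ lies in $\mathcal{B}(\varepsilon,n,r)_l$ by definition, while $\Psi(\sigma) = \sum_{l=0}^r \underline{\Psi}|_l(\sigma)$ decomposes as a sum across the various $\mathcal{B}(\varepsilon,n,r)_l$. Expanding an arbitrary product of generators by distributivity and invoking the orthogonality $\mathcal{B}(\varepsilon,n,r)_l \cdot \mathcal{B}(\varepsilon,n,r)_k = 0$ for $l \neq k$ kills every cross term, leaving only contributions internal to a single $\mathcal{B}(\varepsilon,n,r)_l$. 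This shows $\mathcal{B}(\varepsilon,n,r) \subseteq \bigoplus_l \mathcal{B}(\varepsilon,n,r)_l$, and combining with the previous paragraph yields the first equality. The second equality $\bigoplus_l \mathcal{B}(\varepsilon,n,r)_l = \bigoplus_l \widetilde{\mathcal{B}}(\varepsilon,n,r)_l$ is the tautological identification of each $\widetilde{\mathcal{B}}(\varepsilon,n,r)_l \subseteq \text{End}_\mathbb{C}(\underline{V}_l^{\otimes r})$ with its extension-by-zero image inside $\text{End}_\mathbb{C}(\underline{V}^{\otimes r})$.

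The only slightly delicate point is the identification of $(\text{id}_{V^{\otimes l}})^I$ with the genuine projector $e_I$ onto $\underline{V}_I^{\otimes r}$, which requires carefully unwinding \eqref{def of action} and checking that $\Psi(\tau_I)$ intertwines $\underline{V}_{\underline{l}}^{\otimes r}$ with $\underline{V}_I^{\otimes r}$; once this is in place, the whole argument is purely block-diagonal bookkeeping driven by the $l$-grading of $\underline{V}^{\otimes r}$.
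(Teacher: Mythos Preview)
Your proof is correct and follows essentially the same route as the paper's: both show $\mathcal{B}(\varepsilon,n,r)_l \subseteq \mathcal{B}(\varepsilon,n,r)$ by establishing $\underline{\Psi}|_l(\frak S_r) \subseteq \mathcal{B}(\varepsilon,n,r)$ (the paper by citing \cite[Lemma 3.5(1)]{BYY}, you by constructing the projectors $e_l = \sum_{\sharp I = l}(\mathrm{id}_{V^{\otimes l}})^I$ directly), and then treat the reverse inclusion as block-diagonal bookkeeping. One small omission: your projector construction only covers $1 \leq l \leq r$, since $(\mathrm{id}_{V^{\otimes l}})^I$ is defined only for $l \geq 1$; to handle $l=0$ you should note $e_0 = \Psi(1) - \sum_{l=1}^r e_l \in \mathcal{B}(\varepsilon,n,r)$, after which $\underline{\Psi}|_0(\sigma) = e_0$ follows.
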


\begin{proof}
	By \cite[Lemma 3.5(1)]{BYY}, we get $\underline{\Psi}|_l(\frak S_r) \subseteq {\mathcal{B}}(\varepsilon,n,r)$. Hence $\mathcal{B}(\varepsilon,n,r)_l \subseteq \mathcal{B}(\varepsilon,n,r)$ which yields that $\bigoplus\limits_{l=0}^r \mathcal{B}(\varepsilon,n,r)_l \subseteq \mathcal{B}(\varepsilon,n,r)$. On the other hand, it is clear that $\mathcal{B}(\varepsilon,n,r) \subseteq \bigoplus\limits_{l=0}^r \mathcal{B}(\varepsilon,n,r)_l$.
\end{proof}

From now on, we no longer need to distinguish $\widetilde{\mathcal{B}}(\varepsilon,n,r)_l$ from ${\mathcal{B}}(\varepsilon,n,r)_l$.

For the symmetric group $\frak S_r$, we denote by $(i,i+1)$ for
$i=1,\cdots, r-1$, the transposition just interchanging $i$ and $i+1$, and fixing the others.

Given $\sigma \in \text{End}_\mathbb{C}(\underline{V}^{\otimes r})$ and $I \subseteq \underline{r}$, define $\sigma^{[I]} \in \text{End}_\mathbb{C}(\underline{V}^{\otimes r})$ as below:
\begin{align}\label{def res}
\sigma^{[I]}|_{\underline{V}_J^{\otimes r}}:=\left\{
\begin{aligned}
\sigma|_{\underline{V}_I^{\otimes r}},  \text{ } I=J; \\
0, \text{ }, I \neq J.
\end{aligned}
\right.
\end{align}
By definition, $\sigma^{[I]}(\underline{V}_l^{\otimes r}) \subseteq \underline{V}_l^{\otimes r}$.
For $I=\{i_1, \cdots i_l\}$ and $J=\{j_1, \cdots j_l\}$ with $i_1 < \cdots <i_l$, $j_1 < \cdots < j_l$ and $0 \leq l \leq r$, there exists $\varepsilon_{J,I} \in \frak S_r$ satisfying $\varepsilon_{J,I}(i_t)=j_t$ for all $1 \leq t \leq l$. Now we define \begin{align}\label{E element}
E_{J,I}:=(\Psi(\varepsilon_{J,I}))^{[I]},
\end{align} and note that $E_{\phi,\phi}={({\rm{id}}_{\underline{V}^{\otimes r}})}^{[\phi]}$. Although the element $\varepsilon_{J,I}$ in $\frak S_r$ satisfying $\varepsilon_{J,I}(i_t)=j_t$ ($\forall 1 \leq t \leq l$) is not unique, $(\Psi(\varepsilon_{J,I}))^{[I]}$ does not depend on the choice of $\varepsilon_{J,I}$. It is clear that $E_{J,I}(\underline{V}_I^{\otimes r}) \subseteq \underline{V}_J^{\otimes r}$.
For each $\phi \neq I \subseteq \underline{r}$, we can see that $(\text{id}_{\underline{V}^{\otimes r}})^{[I]}=(\text{id}_{V^{\otimes l}})^I$ where $l=\sharp I>0$. Clearly, $\mathcal{B}(\varepsilon,n,r)_0=D(n,r)_0=\mathbb{C}\underline{\Psi}|_0(\frak S_r)=\mathbb{C}(\text{id}_{\underline{V}^{\otimes r}})^{[\phi]}=\mathbb{C}E_{\phi,\phi}$.

\begin{lemma}\label{calculation}\label{calculation'}
	For any $\sigma \in \text{End}_\mathbb{C}(V^{\otimes l})$, we have $E_{I,J} \circ \sigma^J=\sigma^I \circ E_{I,J}$ with $0 \leq \sharp I=\sharp J=l \leq r$.
\end{lemma}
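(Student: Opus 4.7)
The plan is to verify the identity by checking both sides on each direct summand $\underline{V}_K^{\otimes r}$ of the decomposition $\underline{V}^{\otimes r}=\bigoplus_{K \subseteq \underline{r}} \underline{V}_K^{\otimes r}$. By \eqref{def of action} and \eqref{E element}, both $E_{I,J} \circ \sigma^J$ and $\sigma^I \circ E_{I,J}$ annihilate $\underline{V}_K^{\otimes r}$ whenever $K \neq J$, so the only nontrivial case is $K=J$, where I need to show that both sides induce the same map $\underline{V}_J^{\otimes r} \to \underline{V}_I^{\otimes r}$.

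The cleanest packaging is to introduce, for each $I \subseteq \underline{r}$ with $\sharp I = l$, the natural insertion isomorphism $\iota_I \colon V^{\otimes l} \to \underline{V}_I^{\otimes r}$ that places the tensor factors of $V^{\otimes l}$ in the positions indexed by $I$ (in increasing order) and fills the remaining positions with $\eta$. Unwinding \eqref{def of action} via the identity $\iota_I = \Psi(\tau_I) \circ \iota_{\underline{l}}$ (which holds by the very recipe defining $\tau_I$) together with $\sigma^{\underline{l}}|_{\underline{V}_{\underline{l}}^{\otimes r}}=\iota_{\underline{l}} \circ \sigma \circ \iota_{\underline{l}}^{-1}$ yields the normal form $\sigma^I|_{\underline{V}_I^{\otimes r}} = \iota_I \circ \sigma \circ \iota_I^{-1}$. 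Since $\varepsilon_{J,I}(i_t)=j_t$, an analogous check against \eqref{left action} produces $E_{J,I}|_{\underline{V}_I^{\otimes r}} = \iota_J \circ \iota_I^{-1}$.

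With these normal forms, the verification on $\underline{V}_J^{\otimes r}$ is a single chain:
\[
E_{I,J} \circ \sigma^J|_{\underline{V}_J^{\otimes r}} = (\iota_I \circ \iota_J^{-1}) \circ (\iota_J \circ \sigma \circ \iota_J^{-1}) = \iota_I \circ \sigma \circ \iota_J^{-1} = (\iota_I \circ \sigma \circ \iota_I^{-1}) \circ (\iota_I \circ \iota_J^{-1}) = \sigma^I \circ E_{I,J}|_{\underline{V}_J^{\otimes r}}.
\]

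The main obstacle is purely bookkeeping: the permutations $\tau_I$, $\tau_J$, and $\varepsilon_{I,J}$ are pinned down only on their distinguished $l$-element subsets, so their extensions to $\mathfrak{S}_r$ are not unique. This turns out to be harmless because any such ambiguity permutes only positions that carry the vector $\eta$ in the subspaces of interest, and therefore has no effect. A bare-hands alternative that avoids $\iota_I$ altogether is to set $\pi := \tau_I^{-1}\varepsilon_{I,J}\tau_J$, observe that $\pi(t)=t$ for all $t \in \underline{l}$ so that $\Psi(\pi)|_{\underline{V}_{\underline{l}}^{\otimes r}}=\mathrm{id}$, and then cancel inside the formulas $\sigma^J=\Psi(\tau_J)\circ\sigma^{\underline{l}}\circ\Psi(\tau_J^{-1})$ and $\sigma^I=\Psi(\tau_I)\circ\sigma^{\underline{l}}\circ\Psi(\tau_I^{-1})$ to reach the same conclusion.
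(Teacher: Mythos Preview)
Your proof is correct. The paper does not supply a proof of this lemma; it is stated without argument and used directly in the subsequent corollary. Your approach via the insertion isomorphisms $\iota_I\colon V^{\otimes l}\to\underline{V}_I^{\otimes r}$ makes the bookkeeping explicit and is a clean fleshing-out of the omitted verification.

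One minor caveat worth flagging: your identities $\iota_I = \Psi(\tau_I)\circ\iota_{\underline{l}}$ and $\pi(t)=t$ for $t\in\underline{l}$ both presuppose that the permutations $\tau_I$ satisfy $\tau_I(t)=i_t$ (i.e.\ respect the increasing order on $I$), not merely $\tau_I(\underline{l})=I$ setwise. The paper's defining recipe does not literally force this, but it is the intended convention, as the later manipulations (e.g.\ in the proof of Lemma~\ref{pre lemma}) confirm. Indeed, without some such consistency between the choices of $\tau_I$ for different $I$, the identity $E_{I,J}\circ\sigma^J=\sigma^I\circ E_{I,J}$ can fail for general $\sigma\in\text{End}_{\mathbb{C}}(V^{\otimes l})$, so your reading is the correct one.
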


\begin{corollary}\label{B span 1}
	The algebra $\mathcal{B}(\varepsilon,n,r)_l$ with $1 \leq l \leq r$ is spanned by the set $\{E_{IJ} \circ \sigma^J \mid  I \subseteq \underline{r},J \subseteq \underline{r},\sharp I=\sharp J=l, \sigma \in \mathcal{B}_l(\varepsilon n)\}$.
\end{corollary}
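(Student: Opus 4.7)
The plan is to verify two inclusions between $\mathcal{B}(\varepsilon,n,r)_l$ and the linear span of
\[
S_l:=\{E_{I,J}\circ\sigma^J\mid I,J\subseteq\underline{r},\ \sharp I=\sharp J=l,\ \sigma\in\mathcal{B}_l(\varepsilon n)\}.
\]
The inclusion $\mathrm{span}(S_l)\subseteq\mathcal{B}(\varepsilon,n,r)_l$ is immediate from the factorisation $E_{I,J}=\underline{\Psi}|_l(\varepsilon_{I,J})\circ(\text{id}_{V^{\otimes l}})^J$, which expresses $E_{I,J}$ as a composition of two generators of $\mathcal{B}(\varepsilon,n,r)_l$. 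For the reverse inclusion, since $\mathcal{B}(\varepsilon,n,r)_l$ is generated as an algebra by the $\sigma^I$ with $\sigma\in\mathcal{B}_l(\varepsilon n)$ together with $\underline{\Psi}|_l(\frak S_r)$, it suffices to check (i) every such generator lies in $\mathrm{span}(S_l)$, and (ii) $\mathrm{span}(S_l)$ is stable under composition.

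For step (i), the case $\sigma^I=E_{I,I}\circ\sigma^I$ is automatic, since $\varepsilon_{I,I}$ may be taken to be the identity. For $\underline{\Psi}|_l(\tau)$ with $\tau\in\frak S_r$, I would decompose it into its pieces on each $\underline{V}_J^{\otimes r}$ and, for each $J$ with $\sharp J=l$, factor $\tau=\varepsilon_{\tau(J),J}\cdot\pi_J$ where $\pi_J:=\varepsilon_{\tau(J),J}^{-1}\tau$ setwise stabilises $J$. The restriction $\Psi(\pi_J)|_{\underline{V}_J^{\otimes r}}$ then equals $(\Psi_l(\pi_J'))^J|_{\underline{V}_J^{\otimes r}}$ for a canonically induced $\pi_J'\in\frak S_l$, because $\pi_J$ only permutes the $V$-positions indexed by $J$ nontrivially, while any shuffling of the identical $\eta$'s in the complementary positions is the identity. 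Hence $(\Psi(\tau))^{[J]}=E_{\tau(J),J}\circ(\Psi_l(\pi_J'))^J\in S_l$, and summing over $J$ completes (i).

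For step (ii), I would compute the composition $(E_{I_1,J_1}\circ\sigma_1^{J_1})\circ(E_{I_2,J_2}\circ\sigma_2^{J_2})$ directly. It vanishes unless $J_1=I_2$, because $\sigma_1^{J_1}$ annihilates every summand $\underline{V}_K^{\otimes r}$ with $K\neq J_1$ while $E_{I_2,J_2}$ sends $\underline{V}_{J_2}^{\otimes r}$ into $\underline{V}_{I_2}^{\otimes r}$. When $J_1=I_2$, Lemma \ref{calculation} yields $\sigma_1^{J_1}\circ E_{J_1,J_2}=E_{J_1,J_2}\circ\sigma_1^{J_2}$, and $\sigma_1^{J_2}\circ\sigma_2^{J_2}=(\sigma_1\sigma_2)^{J_2}$ by definition, so the composition reduces to $E_{I_1,J_1}\circ E_{J_1,J_2}\circ(\sigma_1\sigma_2)^{J_2}$. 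The missing identity $E_{I_1,J_1}\circ E_{J_1,J_2}=E_{I_1,J_2}$ is then verified by observing that $\varepsilon_{I_1,J_1}\varepsilon_{J_1,J_2}$ and $\varepsilon_{I_1,J_2}$ agree pointwise on $J_2$, so their ratio fixes $J_2$ and merely permutes positions in $\underline{r}\setminus J_2$, which act as identity on $\underline{V}_{J_2}^{\otimes r}$ because those positions all carry $\eta$.

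The hard part will be the clean verification of the two compositional identities above, namely $E_{I_1,J_1}\circ E_{J_1,J_2}=E_{I_1,J_2}$ and the decomposition $(\Psi(\tau))^{[J]}=E_{\tau(J),J}\circ(\Psi_l(\pi_J'))^J$ used in (i). Both rest on the same combinatorial principle: a permutation $\pi\in\frak S_r$ that setwise stabilises $J$ acts on $\underline{V}_J^{\otimes r}$ solely through its restriction to the $V$-positions in $J$, since permutations of the $\eta$-factors in $\underline{r}\setminus J$ are invisible. Once this principle is cleanly isolated, perhaps as a short preparatory lemma, the remainder of the corollary becomes a routine bookkeeping argument about how the $E_{I,J}$'s and the $\sigma^J$'s interact under composition.
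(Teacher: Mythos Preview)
Your proposal is correct and follows essentially the same approach as the paper: both directions are handled the same way, and the key step of closing $\mathrm{span}(S_l)$ under multiplication rests on Lemma~\ref{calculation} together with the identity $E_{I_1,J_1}\circ E_{J_1,J_2}=E_{I_1,J_2}$, exactly as you outline. The only difference is that where you prove the decomposition $(\Psi(\tau))^{[J]}=E_{\tau(J),J}\circ(\Psi_l(\pi_J'))^J$ by hand via your ``combinatorial principle'', the paper simply invokes \cite[Lemma~3.5(3)]{BYY}, which contains precisely this statement for $D(n,r)_l$; your version is more self-contained but not a different argument.
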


\begin{proof}
	Note that $\mathcal{B}(\varepsilon,n,r)_l$ is generated by $\sigma^J$ ($\forall \sigma \in \mathcal{B}_l(\varepsilon n), \forall J \subseteq \underline{r} \text{ with } \sharp J=l$) and $\underline{\Psi}|_l(\frak S_r)$. Since $\underline{\Psi}|_l(\frak S_r) \subseteq D(n,r)_l$, then \cite[Lemma 3.5(3)]{BYY} implies that $\underline{\Psi}|_l(s)$ ($s \in \frak S_r$) is the sum of $E_{I,J} \circ (\Psi_l(\tau))^J$ with $\sharp I=\sharp J=l$ and $\tau \in \frak S_l$ (Note that $(\Psi_l(\tau))^J$ is defined by (\ref{def of action})). Obviously, we have $\Psi_l(\tau)\sigma \in \mathcal{B}_l(\varepsilon n)$, which implies that $(\Psi_l(\tau))^J \sigma^J=(\Psi_l(\tau)\sigma)^J \in \mathcal{B}^{(J)}_l(\varepsilon n)$. Hence we just need to consider $E_{I,J} \circ \mathcal{B}^{(J)}_l(\varepsilon n)$ and $\mathcal{B}^{(J)}_l(\varepsilon n) \circ E_{I,J}$. Thanks to Lemma \ref{calculation}, we have $\mathcal{B}(\varepsilon,n,r)_l \subseteq \underset{\substack{I \subseteq \underline{r} \\ \lvert I \rvert=l}}\sum \text{ }\underset{\substack{J \subseteq \underline{r} \\ \lvert J \rvert=l}}\sum \underset{\substack{\sigma \in \mathcal{B}_l(\varepsilon n)}}\sum \mathbb{C} E_{IJ} \circ \sigma^J$.
	
	On the other hand, $E_{I,J}=(\Psi(\varepsilon_{I,J}))^{[J]} \in \mathcal{B}(\varepsilon,n,r)_l$ yields that $\underset{\substack{I \subseteq \underline{r} \\ \lvert I \rvert=l}}\sum \text{ }\underset{\substack{J \subseteq \underline{r} \\ \lvert J \rvert=l}}\sum \underset{\substack{\sigma \in \mathcal{B}_l(\varepsilon n)}}\sum \mathbb{C} E_{IJ} \circ \sigma^J \subseteq \mathcal{B}(\varepsilon,n,r)_l$, i.e. $\mathcal{B}(\varepsilon,n,r)_l= \underset{\substack{I \subseteq \underline{r} \\ \lvert I \rvert=l}}\sum \text{ }\underset{\substack{J \subseteq \underline{r} \\ \lvert J \rvert=l}}\sum \underset{\substack{\sigma \in \mathcal{B}_l(\varepsilon n)}}\sum \mathbb{C} E_{IJ} \circ \sigma^J$.
\end{proof}

Corollary \ref{B span 1} implies that for $1 \leq l \leq r$, we have \begin{align}\label{B_l 1}
\mathcal{B}(\varepsilon,n,r)_l=\underset{\substack{I \subseteq \underline{r} \\ \lvert I \rvert=l}}\bigoplus \text{ }\underset{\substack{J \subseteq \underline{r} \\ \lvert J \rvert=l}}\bigoplus E_{IJ} \circ \mathcal{B}^{(J)}_l(\varepsilon n).
\end{align} 

Keep in mind that $\forall z \in Z_l$ (i.e. $z$ is a normalized Brauer diagram with $2l$ dots), $\tau_z \in \text{End}_\mathbb{C}(V^{\otimes l})$ is defined by (\ref{tau action}) and Lemma \ref{Brauer cal}.

\begin{corollary}\label{B span 3}
	The algebra $\mathcal{B}(\varepsilon,n,r)_l$ with $1 \leq l \leq r$ is spanned by set $\{{E}_{IJ} (\Psi_l(s))^J (\tau_z)^J \mid  s \in \frak S_l,z \in Z_l,I \subseteq \underline{r},J \subseteq \underline{r},\sharp I=\sharp J=l\}$.
\end{corollary}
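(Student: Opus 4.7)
The plan is to combine the already-established Corollary \ref{B span 1} with the Brauer-algebra spanning statement Lemma \ref{Brauer span}. By Corollary \ref{B span 1}, $\mathcal{B}(\varepsilon,n,r)_l$ is already spanned by the operators $E_{IJ} \circ \sigma^J$ with $\sigma \in \mathcal{B}_l(\varepsilon n)$ and $I,J \subseteq \underline{r}$, $\sharp I=\sharp J=l$. By Lemma \ref{Brauer span}, every such $\sigma$ is a $\mathbb{C}$-linear combination of products $\Psi_l(s)\tau_z$ with $s \in \frak S_l$ and $z \in Z_l$. What remains is to push such a linear combination through the extension operator $(-)^J$, i.e.\ to know that the assignment $\sigma \mapsto \sigma^J$ is both linear and multiplicative on $\text{End}_\mathbb{C}(V^{\otimes l})$; granted this, one can expand each $\sigma^J$ as $\sum c_{s,z}(\Psi_l(s))^J(\tau_z)^J$ and read off the asserted spanning set.

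The verification that $(-)^J : \text{End}_\mathbb{C}(V^{\otimes l}) \to \text{End}_\mathbb{C}(\underline{V}^{\otimes r})$ is an algebra homomorphism is a short direct check from \eqref{def of action}. Linearity is immediate. For multiplicativity, first treat $J=\underline{l}$: by construction $\sigma^{\underline{l}}$ equals $\sigma \otimes \text{id}_{\mathbb{C}\eta^{\otimes(r-l)}}$ on $\underline{V}_{\underline{l}}^{\otimes r}$ and vanishes on every other summand $\underline{V}_I^{\otimes r}$, so $(\alpha\beta)^{\underline{l}}$ and $\alpha^{\underline{l}}\beta^{\underline{l}}$ agree on $\underline{V}_{\underline{l}}^{\otimes r}$ via $(\alpha \otimes \text{id})(\beta \otimes \text{id}) = (\alpha\beta)\otimes \text{id}$, while both sides vanish on every other summand (because $\beta^{\underline{l}}$ already kills them). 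For general $J$ with $\sharp J=l$, $\sigma^J = \Psi(\tau_J)\sigma^{\underline{l}}\Psi(\tau_J^{-1})$ by definition, and the factor $\Psi(\tau_J^{-1})\Psi(\tau_J)$ in the middle of $\alpha^J\beta^J$ cancels, so $(\alpha\beta)^J = \alpha^J\beta^J$.

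With linearity and multiplicativity of $(-)^J$ in hand, write $\sigma = \sum_{s,z} c_{s,z}\,\Psi_l(s)\tau_z$ via Lemma \ref{Brauer span}; then
\begin{equation*}
\sigma^J \;=\; \sum_{s,z} c_{s,z}\,(\Psi_l(s)\tau_z)^J \;=\; \sum_{s,z} c_{s,z}\,(\Psi_l(s))^J(\tau_z)^J,
\end{equation*}
so that $E_{IJ}\circ \sigma^J = \sum_{s,z} c_{s,z}\,E_{IJ}(\Psi_l(s))^J(\tau_z)^J$. Together with Corollary \ref{B span 1} this yields that $\mathcal{B}(\varepsilon,n,r)_l$ is spanned by $\{E_{IJ}(\Psi_l(s))^J(\tau_z)^J\}$ as claimed. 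The step requiring the most care is the multiplicativity check above, but it amounts to bookkeeping about which summand $\underline{V}_I^{\otimes r}$ each operator is supported on, so no genuine obstacle arises.
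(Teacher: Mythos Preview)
Your proof is correct and follows the same approach as the paper, which simply cites Corollary~\ref{B span 1} and Lemma~\ref{Brauer span}. You have additionally spelled out the multiplicativity of the extension map $(-)^J$, which the paper tacitly uses here and only states explicitly later (in the proof of Proposition~\ref{B basis 3}) without justification; your verification fills that small gap.
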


\begin{proof}
The result follows from Corollary \ref{B span 1} and Lemma \ref{Brauer span}.
\end{proof}

\begin{proposition}\label{B basis}
	Suppose $\frak B$ is a basis of $\mathcal{B}_l(\varepsilon n)$. For $l \in \underline{r}$,
	$\mathcal{B}(\varepsilon,n,r)_l$ has a basis $\{E_{IJ} \circ b^J \mid  b \in \frak B,I \subseteq \underline{r},J \subseteq \underline{r},\sharp I=\sharp J=l\}$.
\end{proposition}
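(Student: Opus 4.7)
The plan is to reduce the statement to two elementary injectivity verifications, exploiting the direct sum decomposition \eqref{B_l 1} already announced right after Corollary \ref{B span 1}. First I would explain why \eqref{B_l 1} really is a direct sum and not just a sum: every operator in $E_{IJ}\circ \mathcal{B}_l^{(J)}(\varepsilon n)$ vanishes on $\underline{V}_K^{\otimes r}$ for $K\neq J$ and has image contained in $\underline{V}_I^{\otimes r}$, so summands indexed by distinct pairs $(I,J)$ have disjoint source/target patterns relative to the decomposition of $\underline{V}^{\otimes r}$ into subspaces of the form \eqref{space basis} and hence intersect trivially. Given this, it suffices to show that for each fixed pair $(I,J)$ the proposed set $\{E_{IJ}\circ b^J \mid b\in \frak B\}$ is a basis of the single summand $E_{IJ}\circ \mathcal{B}_l^{(J)}(\varepsilon n)$.

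Next I would verify the two needed injectivities. The first is that $\sigma\mapsto \sigma^J$ is a linear isomorphism from $\mathcal{B}_l(\varepsilon n)$ onto $\mathcal{B}_l^{(J)}(\varepsilon n)$: by \eqref{def of action}, on $\underline{V}_J^{\otimes r}$ this map equals conjugation by the permutation operator $\Psi(\tau_J)$ of the tensor-extension $\sigma\mapsto \sigma\otimes \id_{(\mathbb{C}\eta)^{\otimes (r-l)}}$, which is manifestly injective. The second is that left-composition with $E_{IJ}$ is injective on $\mathcal{B}_l^{(J)}(\varepsilon n)$: by \eqref{E element}, $E_{IJ}$ restricts on $\underline{V}_J^{\otimes r}$ to the permutation-induced linear isomorphism $\Psi(\varepsilon_{I,J})\colon \underline{V}_J^{\otimes r}\xrightarrow{\sim} \underline{V}_I^{\otimes r}$, so it cannot annihilate any nonzero operator supported on $\underline{V}_J^{\otimes r}$.

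Putting the pieces together, the proposed family is the image of the basis $\frak B$ under a composition of two injective linear maps, while Corollary \ref{B span 1} has already shown that it spans the relevant summand; hence it forms a basis of $E_{IJ}\circ \mathcal{B}_l^{(J)}(\varepsilon n)$. Taking the union over all $(I,J)$ with $\sharp I=\sharp J=l$ and invoking the direct sum \eqref{B_l 1} yields the desired basis of $\mathcal{B}(\varepsilon,n,r)_l$. The only step that demands a little care is the first injectivity: one must look at $\sigma^J$ restricted to $\underline{V}_J^{\otimes r}$, where it faithfully records $\sigma$, rather than at its extension by zero on the other subspaces $\underline{V}_K^{\otimes r}$; with that observation in hand the rest is bookkeeping, so I expect no serious obstacle beyond this one.
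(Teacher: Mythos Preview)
Your proposal is correct and follows essentially the same approach as the paper: both arguments exploit that an operator in $E_{IJ}\circ\mathcal{B}_l^{(J)}(\varepsilon n)$ is supported on $\underline{V}_J^{\otimes r}$ with image in $\underline{V}_I^{\otimes r}$, isolate each $(I,J)$-block, and then use that $b\mapsto E_{IJ}\circ b^J$ is injective because its restriction to $\underline{V}_J^{\otimes r}$ is conjugation by a linear isomorphism. The paper packages the block-isolation step as ``apply $\mathrm{Pr}_S$ and restrict to $\underline{V}_T^{\otimes r}$'' rather than first stating the direct sum, but the content is identical; your version has the minor advantage of making explicit why \eqref{B_l 1} is a \emph{direct} sum rather than merely a sum, which the paper asserts immediately after Corollary~\ref{B span 1} without justification at that point.
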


\begin{proof}
	Since $\underline{V}^{\otimes r}=\bigoplus\limits_{I \subseteq \underline{r}}\underline{V}_I^{\otimes r}$, then for $I \subseteq \underline{r}$, we can define a linear transformation $\text{Pr}_I:\underline{V}^{\otimes r} \rightarrow \underline{V}_I^{\otimes r}$ by $\text{Pr}_I(\sum\limits_{J \subseteq \underline{r}}v_J)=v_I$ where $v_J \in \underline{V}_J^{\otimes r}$, i.e. $\text{Pr}_I$ is a projection operator.
	
	Suppose $\underset{\substack{I \subseteq \underline{r} \\ \lvert I \rvert=l}}\sum \text{ }\underset{\substack{J \subseteq \underline{r} \\ \lvert J \rvert=l}}\sum \sum\limits_{b \in \frak B} k_{(I,J,b)} E_{IJ} b^J=0$ where $k_{(I,J,b)} \in \mathbb{C}$. Since $(E_{IJ}  \circ b^J)(\underline{V}_J^{\otimes r}) \subseteq \underline{V}_I^{\otimes r}$, then for $S,T \subseteq \underline{r}$, it is clear that
	\begin{align*}
	\text{Pr}_S(E_{IJ}  \circ b^J)|_{\underline{V}_T^{\otimes r}}=\delta_{SI}\delta_{JT} (E_{IJ}  \circ b^J)|_{\underline{V}_J^{\otimes r}}=\begin{cases}
	{E}_{IJ} \circ (b^J)|_{\underline{V}_J^{\otimes r}}& , S=I \text{ and } J=T\\
	0& , S \neq I \text{ or } J \neq T.
	\end{cases}
	\end{align*}
	
	Thus we have
	\begin{align*}
	\text{Pr}_S(\underset{\substack{I \subseteq \underline{r} \\ \lvert I \rvert=l}}\sum \text{ }\underset{\substack{J \subseteq \underline{r} \\ \lvert J \rvert=l}}\sum \sum\limits_{b \in \frak B} k_{(I,J,b)} E_{IJ} b^J)|_{\underline{V}_T^{\otimes r}}=\underset{\substack{I \subseteq \underline{r} \\ \lvert I \rvert=l}}\sum \text{ }\underset{\substack{J \subseteq \underline{r} \\ \lvert J \rvert=l}}\sum \sum\limits_{b \in \frak B} k_{(I,J,b)} \text{Pr}_S(E_{IJ}  \circ b^J)|_{\underline{V}_T^{\otimes r}}
	\end{align*}
	\begin{align}\label{basis eq}
	=\sum\limits_{b \in \frak B} k_{(S,T,b)} E_{ST}  \circ (b^T)|_{\underline{V}_T^{\otimes r}}
	\end{align}
	
	Then 
	\begin{align*}
\underset{\substack{I \subseteq \underline{r} \\ \lvert I \rvert=l}}\sum \text{ }\underset{\substack{J \subseteq \underline{r} \\ \lvert J \rvert=l}}\sum \sum\limits_{b \in \frak B} k_{(I,J,b)} E_{IJ} b^J=0 \Longrightarrow \text{Pr}_S(\underset{\substack{I \subseteq \underline{r} \\ \lvert I \rvert=l}}\sum \text{ }\underset{\substack{J \subseteq \underline{r} \\ \lvert J \rvert=l}}\sum \sum\limits_{b \in \frak B} k_{(I,J,b)} E_{IJ} b^J)|_{\underline{V}_T^{\otimes r}}=0
	\end{align*}
	\begin{align*}
	\overset{\text{By }(\ref{basis eq})}\Longrightarrow \sum\limits_{b \in \frak B} k_{(S,T,b)} E_{ST}  \circ (b^T)|_{\underline{V}_T^{\otimes r}}=0 \Longrightarrow \sum\limits_{b \in \frak B} k_{(S,T,b)} b=0.
	\end{align*}
	
	This yields that $k_{(S,T,b)}=0$ for all $b \in \frak B$. By the arbitrariness of $S$ and $T$, we get $k_{(S,T,b)}=0$ for all $b \in \frak B$, $S,T \subseteq \underline{r}$ with $\lvert S \rvert=\lvert T \rvert=l$. This yields that all elements in $\{E_{IJ} \circ b^J \mid  b \in \frak B,z \in Z_l,I \subseteq \underline{r},J \subseteq \underline{r},\sharp I=\sharp J=l\}$ are linear independent.
\end{proof}

\begin{proposition}\label{B basis 3}
	Suppose $n \geq 2r$. Then $\{E_{IJ} (\Psi_l(s))^J (\tau_z)^J \mid  s \in \frak S_l,z \in Z_l,I \subseteq \underline{r},J \subseteq \underline{r},\sharp I=\sharp J=l\}$ is a basis of $\mathcal{B}(\varepsilon,n,r)_l$ with $l \in \underline{r}$,
	Thus $\text{dim}\left(\mathcal{B}(\varepsilon,n,r)_l\right)=\dbinom{r}{l}^2(2l-1)!!$ with $1 \leq l \leq r$, where $(2l-1)!!=(2l-1)\cdot (2l-3) \cdots 1$.
\end{proposition}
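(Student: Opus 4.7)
The plan is to combine Proposition~\ref{B basis} with Lemma~\ref{Brauer basis}; the proposition is essentially a two-step bookkeeping exercise once the right basis of $\mathcal{B}_l(\varepsilon n)$ is plugged in.

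First, since $n\geq 2r\geq 2l$ for every $1\leq l\leq r$, Lemma~\ref{Brauer basis} (applied at level $l$ in place of $r$) gives that
\[
\frak B_l := \{\Psi_l(s)\,\tau_z \mid s\in\frak S_l,\ z\in Z_l\}
\]
is a basis of $\mathcal{B}_l(\varepsilon n)$. Substituting this choice of $\frak B=\frak B_l$ into Proposition~\ref{B basis}, one immediately obtains that $\{E_{IJ}\circ (\Psi_l(s)\tau_z)^J \mid s\in\frak S_l,\ z\in Z_l,\ I,J\subseteq\underline{r},\ \sharp I=\sharp J=l\}$ is a basis of $\mathcal{B}(\varepsilon,n,r)_l$.

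To convert this into the stated form, I would verify the short multiplicativity property
\[
(\Psi_l(s)\,\tau_z)^J \;=\; (\Psi_l(s))^J \circ (\tau_z)^J .
\]
By the definition \eqref{def of action} one has $\sigma^J=\Psi(\tau_J)\circ\sigma^{\underline{l}}\circ\Psi(\tau_J^{-1})$, so it suffices to show that $\sigma\mapsto\sigma^{\underline{l}}$ is multiplicative on $\text{End}_\mathbb{C}(V^{\otimes l})$. This is immediate: both $(\sigma_1\sigma_2)^{\underline{l}}$ and $\sigma_1^{\underline{l}}\circ\sigma_2^{\underline{l}}$ act as $\sigma_1\sigma_2\otimes(\text{id}_{\mathbb{C}\eta})^{\otimes(r-l)}$ on $\underline{V}_{\underline{l}}^{\otimes r}$ and vanish on every other summand $\underline{V}_I^{\otimes r}$ (for the composition, note that $\sigma_2^{\underline{l}}$ sends $\underline{V}_{\underline{l}}^{\otimes r}$ into itself, so the two zero-extensions compose cleanly). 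Conjugation by $\Psi(\tau_J)$ preserves this ring-homomorphism property, yielding the claimed identity.

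Finally I would count: there are $\binom{r}{l}$ choices for each of $I$ and $J$, while by Remark~\ref{Brauer dim} the number of basis elements $\Psi_l(s)\tau_z$ of $\mathcal{B}_l(\varepsilon n)$ equals $(2l-1)!!$ under the hypothesis $n\geq 2l$. Multiplying gives $\dim \mathcal{B}(\varepsilon,n,r)_l=\binom{r}{l}^{2}(2l-1)!!$. The only step that requires any actual verification is the multiplicativity identity above; everything else is a direct application of results already established.
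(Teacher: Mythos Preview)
Your proof is correct and follows essentially the same approach as the paper: both combine Proposition~\ref{B basis} with Lemma~\ref{Brauer basis} (using $n\geq 2r\geq 2l$), invoke the identity $(\Psi_l(s)\tau_z)^J=(\Psi_l(s))^J(\tau_z)^J$, and count via Remark~\ref{Brauer dim}. You actually supply more detail than the paper does on the multiplicativity of $\sigma\mapsto\sigma^J$, which the paper simply asserts.
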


\begin{proof}
	Suppose $1 \leq l \leq r$. Note that $(\Psi_l(s)\tau_z)^J=(\Psi_l(s))^J (\tau_z)^J$. By Proposition \ref{B basis} and Lemma \ref{Brauer basis}, we see that $\{E_{IJ} (\Psi_l(s))^J (\tau_z)^J \mid  s \in \frak S_l,z \in Z_l,I \subseteq \underline{r},J \subseteq \underline{r},\sharp I=\sharp J=l\}$ is a basis of $\mathcal{B}(\varepsilon,n,r)_l$.
	
	Thanks to Remark \ref{Brauer dim}, we have $\sharp \{\Psi_l(s)\tau_z \mid s \in \frak S_l, z \in Z_l \}=(2l-1)!!$. Hence $\text{dim}\left(\mathcal{B}(\varepsilon,n,r)_l\right)=\sharp \{E_{IJ} (\Psi_l(s))^J (\tau_z)^J \mid  s \in \frak S_l,z \in Z_l,I \subseteq \underline{r},J \subseteq \underline{r},\sharp I=\sharp J=l\}=\sharp \{I \subseteq \underline{r} \mid \sharp I=l\} \cdot \sharp \{J \subseteq \underline{r} \mid \sharp J=l\}\cdot \sharp \{\Psi_l(s)\tau_z \mid s \in \frak S_l, z \in Z_l \}=\binom{r}{l}^2(2l-1)!!$.
\end{proof}

Keep in mind that $\mathcal{B}(\varepsilon,n,r)_0=\mathbb{C}E_{\phi,\phi}$, then $\text{dim}(\mathcal{B}(\varepsilon,n,r)_0)=1$.  

\begin{remark}
	Assume that $\sigma,\lambda \in \mathcal{B}_l(\varepsilon n)$ and $I,J,K,L \subseteq \underline{r}$ with $1 \leq l \leq r, \sharp J=\sharp K=\sharp L=\sharp I=l$. By Lemma \ref{calculation}, we have $(E_{IJ}\sigma^J)\cdot(E_{JL}\lambda^L)=E_{IJ}E_{JL}\sigma^L\lambda^L=E_{IL}(\sigma\lambda)^L$. If $J \neq K$, it is clear that $(E_{IJ}\sigma^J)\cdot(E_{KL}\lambda^L)=0$. Hence we have a multiplication formula as follows:
	\begin{align}\label{muli formula}
	(E_{IJ}\sigma^J)\cdot(E_{KL}\lambda^L)=\delta_{JK}E_{IL}(\sigma\lambda)^L.
	\end{align}
	
	 Let $\sigma=\Psi_l(s_1)\tau_{z_1}$ and $\lambda=\Psi_l(s_2)\tau_{z_2}$, where $s_1,s_2 \in \frak S_r$ and $z_1,z_2 \in Z_l$. Then $\sigma\lambda$ can be calculated by \cite[Lemma 10.1.5]{GW}.
\end{remark}

Keep in mind that $\{f_p\}_{1 \leq p \leq n}$ is a basis for $V$ and $\{f^p\}_{1 \leq p \leq n}$ is the dual basis for $V$ relative to the form $\omega$.
Given $1 \leq i < j \leq r$ and $I \subseteq \underline{r}$, if $\{i,j\} \subseteq I$, then $\tau_{ij} \in \mathcal{B}_r(\varepsilon n)$ acts on $\underline{V}_I^{\otimes r}$ like (\ref{tau action}):
\begin{align}\label{tau action 2}
\rho_I(\tau_{ij})(u):=\omega(v_i,v_j)\sum_{p=1}^{n}v_1\otimes...\otimes \underbrace{f_p}_\text{i th} \otimes...\otimes \underbrace{f^p}_\text{j th} \otimes...\otimes v_r,
\end{align}
where $u=v_1 \otimes...\otimes v_r \in \underline{V}_I^{\otimes r}$.

Suppose that $z=\{i_1,j_1\},...,\{i_t,j_t\} \in X_r$ is a normalized $t$-bar Brauer diagram. Due to Lemma \ref{Brauer cal}, $\tau_z=\prod_{p=1}^{t}\tau_{i_pj_p} \in \mathcal{B}_r(\varepsilon n)$ is defined independently of the order of the product. If $e(z):=\{i_1,j_1,\cdots,i_t,j_t\} \subseteq I \subseteq \underline{r}$, $\tau_z$ acts on $\underline{V}_I^{\otimes r}$ as follows: If $u=v_1 \otimes...\otimes v_r \in \underline{V}_I^{\otimes r}$,
\begin{align}\label{tau action 3}
\rho_I(\tau_z)(u):=(\rho_I(\tau_{i_1j_1}) \cdots \rho_I(\tau_{i_tj_t}))(u),
\end{align}
which is defined independently of the order of the product. While $e(z)=\{i_1,j_1,\cdots,i_t,j_t\} \nsubseteq I$, $\tau_z$ acts trivally on $\underline{V}_I^{\otimes r}$, i.e. $\rho_I(\tau_z)(u):=0$, where $u \in \underline{V}_I^{\otimes r}$.

For $u \in \underline{V}_J^{\otimes r}$ with $J \neq I$, $\rho_I(\tau_z)(u):=0$. Namely, $\tau_z$ acts trivally on $\underline{V}_J^{\otimes r}$. Since $\underline{V}^{\otimes r}=\bigoplus\limits_{I \subseteq \underline{r}}\underline{V}_I^{\otimes r}$, $\rho_I(\tau_z)$ is a linear transformation on $\underline{V}^{\otimes r}$ satisfying $\rho_I(\tau_z)(\underline{V}_I^{\otimes r}) \subseteq \underline{V}_I^{\otimes r}$. 

For $s \in \frak S_r$ and $z \in Z_r$, we can define a linear transformation $\rho_I(\Psi|_r(s)\tau_z)$ on $\underline{V}^{\otimes r}$:
\begin{align}\label{tau action 4}
\rho_I(\Psi|_r(s)\tau_z):=\Psi(s)\rho_I(\tau_z).
\end{align}

Suppose $n=\text{dim}V \geq 2r$, for $x \in \mathcal{B}_r(\varepsilon n)$, (\ref{tau action 4}) and Lemma \ref{Brauer basis} give rise to a linear operator $\rho_I(x)$. Clearly, $\rho_I: \mathcal{B}_r(\varepsilon n) \rightarrow \text{End}_\mathbb{C}(\underline{V}^{\otimes r})$ is a linear map. Hence we can define $\rho_I(\mathcal{B}_r(\varepsilon n)):=\{\rho_I(x) \mid x \in \mathcal{B}_r(\varepsilon n)\}$ which a (linear) subspace of $\text{End}_\mathbb{C}(\underline{V}^{\otimes r})$. It is clear that $\rho_{\underline{r}}(x)=x$ for all $x \in \mathcal{B}_r(\varepsilon n)$.

The special case of (\ref{tau action 4}) is just
\begin{align}\label{special}
\rho_I(\Psi|_r(s))=(\Psi(s))^{[I]}.
\end{align}

\begin{lemma}\label{pre lemma}
	If $n \geq 2r$, $\mathcal{B}(\varepsilon,n,r)_l=\bigoplus\limits_{I \subseteq \underline{r} \atop \sharp I=l}\rho_I(\mathcal{B}_r(\varepsilon n))$ with $0 \leq l \leq r$.
\end{lemma}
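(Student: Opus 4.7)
The plan is to exploit the hypothesis $n\geq 2r$, under which $\{\Psi|_r(s)\tau_z\mid s\in\frak S_r, z\in Z_r\}$ is a basis of $\mathcal{B}_r(\varepsilon n)$ by Lemma \ref{Brauer basis}, and $\{E_{K,J}(\Psi_l(\pi))^J(\tau_{z'})^J\mid \pi\in\frak S_l, z'\in Z_l, \sharp K=\sharp J=l\}$ is a basis of $\mathcal{B}(\varepsilon,n,r)_l$ by Proposition \ref{B basis 3}. I will prove each inclusion by computing $\rho_I$ on a basis element of $\mathcal{B}_r(\varepsilon n)$, and then invert that computation on a basis element of $\mathcal{B}(\varepsilon,n,r)_l$; directness will follow from the support structure on the decomposition $\underline{V}^{\otimes r}=\bigoplus_{I\subseteq\underline{r}}\underline{V}_I^{\otimes r}$.

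The case $l=0$ is immediate: since $e(z)\not\subseteq\phi$ unless $z=x_0$, we have $\rho_\phi(\Psi|_r(s)\tau_z)=0$ for $z\neq x_0$, while $\rho_\phi(\Psi|_r(s)\tau_{x_0})=E_{\phi,\phi}$, giving $\rho_\phi(\mathcal{B}_r(\varepsilon n))=\mathbb{C}E_{\phi,\phi}=\mathcal{B}(\varepsilon,n,r)_0$. Now assume $1\leq l\leq r$ and fix $I\subseteq\underline{r}$ with $\sharp I=l$. For $\rho_I(\mathcal{B}_r(\varepsilon n))\subseteq\mathcal{B}(\varepsilon,n,r)_l$, apply $\rho_I$ to $\Psi|_r(s)\tau_z$: if $e(z)\not\subseteq I$ the image vanishes; otherwise set $K:=s(I)$ and factor $s=\varepsilon_{K,I}\cdot s'$ with $s':=\varepsilon_{K,I}^{-1}s$, so that $s'(I)=I$. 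Using the canonical isomorphism $\Psi(\tau_I^{-1})\colon\underline{V}_I^{\otimes r}\xrightarrow{\sim}V^{\otimes l}\otimes\mathbb{C}\eta^{\otimes(r-l)}$ to transport the action, $\Psi(s')\rho_I(\tau_z)|_{\underline{V}_I^{\otimes r}}$ corresponds to $\Psi_l(\tilde{s'})\tau_{\tilde z}\in\mathcal{B}_l(\varepsilon n)$, where $\tilde{s'}\in\frak S_l$ is the permutation that $s'$ induces on $I$ via the ascending-order bijection $I\leftrightarrow\underline{l}$ and $\tilde z\in Z_l$ is the Brauer diagram obtained by relabeling the bars of $z$ via the same bijection. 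Since both $\Psi(s')\rho_I(\tau_z)$ and $\sigma^I$, with $\sigma:=\Psi_l(\tilde{s'})\tau_{\tilde z}$, vanish outside $\underline{V}_I^{\otimes r}$ and agree on it, they are equal, giving $\rho_I(\Psi|_r(s)\tau_z)=\Psi(\varepsilon_{K,I})\sigma^I=E_{K,I}\sigma^I\in\mathcal{B}(\varepsilon,n,r)_l$ by Corollary \ref{B span 1}.

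For the reverse inclusion, I invert the above construction. Take a basis element $E_{K,J}(\Psi_l(\pi))^J(\tau_{z'})^J$ of $\mathcal{B}(\varepsilon,n,r)_l$; lift $z'$ to $z\in Z_r$ by placing its bars on the indices of $J$ (so $e(z)\subseteq J$), lift $\pi$ to $s'\in\frak S_r$ that fixes $\underline{r}\setminus J$ pointwise and induces $\pi$ on $J$ via the ascending-order bijection $J\leftrightarrow\underline{l}$, and set $s:=\varepsilon_{K,J}\cdot s'$. The same transport argument yields $\rho_J(\Psi|_r(s)\tau_z)=E_{K,J}(\Psi_l(\pi)\tau_{z'})^J=E_{K,J}(\Psi_l(\pi))^J(\tau_{z'})^J$, so every basis element of $\mathcal{B}(\varepsilon,n,r)_l$ lies in $\rho_J(\mathcal{B}_r(\varepsilon n))$, proving $\mathcal{B}(\varepsilon,n,r)_l\subseteq\sum_{\sharp I=l}\rho_I(\mathcal{B}_r(\varepsilon n))$.

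Directness follows from the support pattern: each $x\in\rho_I(\mathcal{B}_r(\varepsilon n))$ annihilates $\underline{V}_{J}^{\otimes r}$ for every $J\neq I$, so a relation $\sum_{\sharp I=l}x_I=0$ with $x_I\in\rho_I(\mathcal{B}_r(\varepsilon n))$ forces $x_{I_0}|_{\underline{V}_{I_0}^{\otimes r}}=0$ upon evaluating at $\underline{V}_{I_0}^{\otimes r}$, hence $x_{I_0}=0$ for each $I_0$. The principal bookkeeping obstacle is pinning down the transported data $\tilde{s'},\tilde z$ so that $\Psi(s')\rho_I(\tau_z)=(\Psi_l(\tilde{s'})\tau_{\tilde z})^I$ holds on the nose; once the identification via $\Psi(\tau_I^{-1})$ is carefully set up, the remaining verifications against the definitions in \S\ref{algebra B} reduce to direct checks on monomial tensors $\eta_\textbf{j}\in\underline{V}_I^{\otimes r}$.
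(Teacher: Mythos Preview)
Your proposal is correct and follows essentially the same route as the paper's own proof: both argue by applying $\rho_I$ to basis elements $\Psi|_r(s)\tau_z$, split according to whether $e(z)\subseteq I$, factor $s$ through $\varepsilon_{s(I),I}$ to land on an element of the form $E_{K,I}(\Psi_l(\sigma))^I(\tau_t)^I$, and then invert this construction to obtain the reverse inclusion from the basis in Proposition~\ref{B basis 3}. The only notable addition in your argument is the explicit verification of directness via the support on $\underline{V}_I^{\otimes r}$, which the paper leaves implicit.
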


\begin{proof}
	The case $l=0$ is clear. Now we assume that $l \in \underline{r}$.
	
	Fix $I \subseteq \underline{r}$ with $\sharp I=l$. For $s \in \frak S_r$ and $z \in Z_r$, $\rho_I(\Psi|_r(s)\tau_z)=\Psi(s)\rho_I(\tau_z)$. If $e(z) \nsubseteq I$, $\rho_I(\tau_z)=0$. Hence $\rho_I(\Psi|_r(s)\tau_z)=0 \in \mathcal{B}(\varepsilon,n,r)_l$. 
	
	Now suppose $e(z) \subseteq I$, there exists $t \in Z_l$ such that $\rho_I(\tau_z)=(\tau_t)^I$. Suppose $I=\{i_1, \cdots, i_l\}$ with $i_1 < \cdots <i_l$ and $s(I)=\{j_1, \cdots, j_l\}$ with $j_1 < \cdots < j_l$ and $s(I)=\{j_1, \cdots, j_l\}$ with $j_1 < \cdots < j_l$. Moreover, assume that $m_1, \cdots, m_l$ are different numbers in $\underline{l}$ satisfying $s(i_t)=j_{m_t}$ ($\forall t \in \underline{l}$). There exist $\sigma' \in \frak S_r$ satisfying $\sigma'(i_t)=i_{m_t}$ ($\forall 1 \leq t \leq l$) and set $\sigma=\begin{pmatrix}
	1 & \cdots & l \\
	m_1 & \cdots & m_l
	\end{pmatrix} \in \frak S_l$. Recall that $\varepsilon_{s(I),I}(i_t)=j_t$ for all $1 \leq t \leq l$ (i.e. $\varepsilon_{s(I),I}(i_{m_t})=j_{m_t}$ for all $1 \leq t \leq l$).
	It is not hard to see that $(\Psi(s))^{[I]}=(\Psi(\varepsilon_{s(I),I} \cdot \sigma'))^{[I]}=(\Psi(\varepsilon_{s(I),I}))^{[I]} \cdot (\Psi(\sigma'))^{[I]}=E_{s(I),I} \cdot (\Psi_l(\sigma))^I$. It follows from Proposition \ref{B basis 3} that $\rho_I(\Psi|_r(s)\tau_z)=\Psi(s)\rho_I(\tau_z)=(\Psi(s))^{[I]}\rho_I(\tau_z)=E_{s(I),I} \cdot (\Psi_l(\sigma))^I \cdot (\tau_t)^I \in \mathcal{B}(\varepsilon,n,r)_l$.
	
	$\rho_I(\Psi|_r(s)\tau_z) \in \mathcal{B}(\varepsilon,n,r)_l$ implies that $\rho_I(\mathcal{B}_r(\varepsilon n)) \subseteq \mathcal{B}(\varepsilon,n,r)_l$, i.e.  $\bigoplus\limits_{I \subseteq \underline{r} \atop \sharp I=l}\rho_I(\mathcal{B}_r(\varepsilon n)) \subseteq \mathcal{B}(\varepsilon,n,r)_l$.
	
	Conversely, fix $s \in \frak S_l,t \in Z_l$ and $I,J \subseteq \underline{r}$ with $\sharp I=\sharp J=l$, we consider $E_{I,J}(\Psi_l(s))^J(\tau_t)^J$. There exists $\sigma \in \frak S_r$ and $z' \in Z_r$ such that $(\Psi(\sigma))^{[J]}=(\Psi_l(s))^J$ and $\rho_J(\tau_{z'})=(\tau_t)^J$. Thus $E_{I,J}(\Psi_l(s))^J(\tau_t)^J=(\Psi(\varepsilon_{I,J}))^{[J]} \cdot (\Psi(\sigma))^{[J]}(\tau_t)^J=(\Psi(\varepsilon_{I,J} \cdot \sigma))^{[J]}(\tau_t)^J=\Psi(\varepsilon_{I,J} \cdot \sigma) \circ (\tau_t)^J=\Psi(\varepsilon_{I,J} \cdot \sigma) \circ \rho_J(\tau_{z'}) \in \rho_J(\mathcal{B}_r(\varepsilon n))$. Hence $E_{I,J}(\Psi_l(s))^J(\tau_t)^J \in \bigoplus\limits_{I \subseteq \underline{r} \atop \sharp I=l}\rho_I(\mathcal{B}_r(\varepsilon n))$. It follows from Proposition \ref{B basis 3} that $\mathcal{B}(\varepsilon,n,r)_l \subseteq \bigoplus\limits_{I \subseteq \underline{r} \atop \sharp I=l}\rho_I(\mathcal{B}_r(\varepsilon n))$.
\end{proof}

Lemma \ref{pre lemma} and Lemma \ref{B decom} implies that \begin{align}\label{for final use 2}
\mathcal{B}(\varepsilon,n,r)=\bigoplus\limits_{I \subseteq \underline{r}}\rho_I(\mathcal{B}_r(\varepsilon n)).
\end{align}

\section{Invariants and dualities of Levi and parabolic groups.}\label{2}

\subsection{Restricted dualities}\label{RTIs}
Keep the notations as before. 
Set $W_i:=\bigoplus_{t=i}^r\underline{V}_t^{\otimes r}$ with $0 \leq i \leq r$ and $W_{r+1}=0$. It is clear that $W_0=\underline{V}^{\otimes r}$ and $W_r=V^{\otimes r}$. Recall that $\{\eta_{1},\cdots,\eta_n\}$ and $\{\eta_{1},\cdots,\eta_n,\eta\}$ are basis of $V$ and $\underline{V}$ repectively. Namely, $V=\bigoplus_{i=1}^n\mathbb{C}\eta_{i}$ and $\underline{V}=V \oplus \eta$. For $\textbf{j}=\{j_1,\cdots,j_r\} \in (\underline{n+1})^r$ (i.e. $1 \leq j_1,\cdots,j_r \leq n+1$), we have $\eta_{\textbf{j}}=\eta_{j_1} \otimes \cdots \otimes \eta_{j_r}$.

\begin{proposition}\label{key decom} Let $G$ be a closed subgroup of $\GL(V)$. 
	For any $f \in \text{End}_{\underline{G}}(\underline{V}^{\otimes r})$ and $l \in \{0,1,\cdots, r\}$, we have $f(\underline{V}_l^{\otimes r}) \subseteq W_l$. Hence $f(W_l) \subseteq W_l$.
\end{proposition}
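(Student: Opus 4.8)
The plan is to exploit the explicit action \eqref{def of module} of $\underline{G}$ on $\underline{V}$, and in particular the behavior of the unipotent elements $e^v$ $(v \in V)$. Recall that for $\underline{g}=(g,v)$ and $\underline{u}=u+a\eta$ we have $\underline{g}\cdot\underline{u}=\rho(g)(u)+av+a\eta$; specializing to $g=1$, the element $e^v$ acts by $e^v\cdot(u+a\eta)=u+av+a\eta$. So $e^v$ fixes $V$ pointwise and sends $\eta\mapsto\eta+v$. On a monomial tensor $\eta_{\mathbf j}=\eta_{j_1}\otimes\cdots\otimes\eta_{j_r}$ lying in $\underline{V}_I^{\otimes r}$ with $\sharp I=l$ (so exactly $l$ of the slots carry a $V$-vector and $r-l$ carry $\eta$), the operator $\Phi(e^v)$ replaces each $\eta$ in a $\eta$-slot by $\eta+v$; expanding the product, the ``top term'' (all $\eta$'s kept) stays in $\underline{V}_l^{\otimes r}$, while every other term has strictly more $V$-slots, hence lies in $\bigoplus_{t>l}\underline{V}_t^{\otimes r}$. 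Thus
\begin{align}\label{evfilt}
\Phi(e^v)\,\eta_{\mathbf j}=\eta_{\mathbf j}+(\text{terms in }\textstyle\bigoplus_{t>l}\underline{V}_t^{\otimes r})\qquad(\eta_{\mathbf j}\in\underline{V}_l^{\otimes r}).
\end{align}
In particular $\Phi(e^v)W_l\subseteq W_l$, so $W_l$ is already a $\underline{V}$-submodule (indeed a $\underline{G}$-submodule, since $G\times\mathbf{G}_m$ also preserves each $\underline{V}_t^{\otimes r}$), which gives the final sentence of the statement once the first is proved; but more importantly \eqref{evfilt} is the tool for the first statement.

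Here is the main argument. Fix $f\in\text{End}_{\underline{G}}(\underline{V}^{\otimes r})$ and $l\in\{0,\dots,r\}$. Write, for a vector $w\in\underline{V}^{\otimes r}$, $w=\sum_{t=0}^r w_t$ with $w_t\in\underline{V}_t^{\otimes r}$ its components in the decomposition $\underline{V}^{\otimes r}=\bigoplus_t\underline{V}_t^{\otimes r}$. I want to show: if $u\in\underline{V}_l^{\otimes r}$ then $f(u)_t=0$ for all $t<l$, i.e. $f(u)\in W_l$. Suppose not, and let $t_0<l$ be minimal with $f(u)_{t_0}\neq 0$ for some $u\in\underline{V}_l^{\otimes r}$. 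I would use the $\underline{G}$-equivariance $f(\Phi(\underline g)u)=\Phi(\underline g)f(u)$ applied to $\underline g=e^v$. By \eqref{evfilt}, $\Phi(e^v)u=u+(\text{higher})$, so the left side is $f(u)+f(\text{higher terms in }W_{l+1})$; by minimality of $t_0$ (applied now on each level $\geq l+1$ — note the statement is exactly an induction downward on $l$, so I would instead run a clean \emph{downward induction on $l$}, treating $l=r$ as the base case where $W_r=V^{\otimes r}$ is all of the space above level $r-1$ and there is nothing to prove, and assuming the result for all levels $>l$), $f$ of those higher terms lies in $W_{l+1}$, hence its level-$t_0$ component is $0$. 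So the level-$t_0$ component of $f(\Phi(e^v)u)$ equals $f(u)_{t_0}$, independent of $v$. On the other hand the level-$t_0$ component of $\Phi(e^v)f(u)$ is $\Phi(e^v)$ acting on the filtration-graded piece: since $\Phi(e^v)$ is unipotent with the filtration $W_\bullet$ as an invariant flag and acts as identity on the associated graded, the level-$t_0$ component of $\Phi(e^v)f(u)$ is $f(u)_{t_0}+(\text{contributions from }f(u)_s,\ s<t_0)$; by minimality there are no such $f(u)_s$, so it too equals $f(u)_{t_0}$. This is consistent but not yet a contradiction, so the honest approach is slightly different: I would instead look at the \emph{top} nonzero component.

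The cleaner route, which I would actually write: do downward induction on $l$. For the inductive step, let $u\in\underline{V}_l^{\otimes r}$ and let $m$ be the smallest index with $f(u)_m\neq0$; we must show $m\geq l$. The key computation is to evaluate $\Phi(e^v)f(u)$ and match its components against $f(\Phi(e^v)u)$. From \eqref{evfilt}, $f(\Phi(e^v)u)-f(u)\in f(W_{l+1})$, and the whole point is that this depends on $v$ \emph{polynomially} with no constant term, while it must equal $\Phi(e^v)f(u)-f(u)$, which by the filtration property lies in $W_{m+1}$ (raises level by at least one) — so its lowest possible level is $m+1$, forcing $f(u)_m$ to be ``invisible'' to the equation, which is automatic, \emph{unless} we also use that $f(W_{l+1})\subseteq W_{l+1}$ by the inductive hypothesis. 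Then $f(\Phi(e^v)u)-f(u)\in W_{l+1}$, so its level-$t$ component vanishes for $t\leq l$; equating with $\Phi(e^v)f(u)-f(u)\in W_{m+1}$ and extracting the level-$m$ component (which for the right side is nonzero in general only if we push the argument via a well-chosen $v$) yields $f(u)_m=0$ when $m\leq l-1$, the contradiction. Concretely, the nonvanishing on the right is arranged by choosing $v$ so that $\Phi(e^v)$ genuinely moves $f(u)_m$ up — e.g. pick $v=\eta_i$ for a suitable $i$ so that at least one $\eta$-slot of a monomial in $f(u)_m$ becomes an $\eta_i$-slot, producing a nonzero level-$(m+1)$ term whose coefficient is a nonzero multiple of a coordinate of $f(u)_m$. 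Then that term must be cancelled by $f(W_{l+1})\subseteq W_{l+1}$-contributions, impossible if $m+1\leq l$, i.e. if $m\leq l-1$. Hence $m\geq l$, completing the induction.

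\textbf{Main obstacle.} The delicate point is precisely the bookkeeping in the last paragraph: making the comparison of graded components rigorous, i.e. showing that the single operator $\Phi(e^v)$ (really the family over all $v\in V$, or even over a spanning set of $v$'s) sees enough of $f(u)_m$ to force it to vanish, while controlling that the ``error'' $f(\Phi(e^v)u)-f(u)$ stays inside $W_{l+1}$ by the inductive hypothesis. I expect this to require writing $f(u)_m=\sum_{\mathbf j}c_{\mathbf j}\eta_{\mathbf j}$ over monomials at level $m$, applying $\Phi(e^{\eta_i})$ for each basis vector $\eta_i$ of $V$, and reading off linear equations in the $c_{\mathbf j}$; since $n=\dim V\geq 1$ and $m<l\leq r$ means there is at least one $\eta$-slot to act on, these equations will be non-trivial and will force all $c_{\mathbf j}=0$. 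Everything else (that $G\times\mathbf G_m$ preserves each $\underline V_t^{\otimes r}$, that $\underline G=\langle\,\{e^v\},G\times\mathbf G_m\,\rangle$ in the relevant sense, that $\Phi(e^v)$ is unipotent with $W_\bullet$ invariant) is immediate from \eqref{def of group}, \eqref{def of module} and the definitions in \S\ref{1.2}.
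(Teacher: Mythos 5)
The proposal takes exactly the same route as the paper: downward induction on $l$ with base case $l=r+1$ (or $l=r$), use of the unipotent elements $e^v$, the observation that $\Phi(e^v)$ preserves the filtration $W_\bullet$ and acts as the identity on the associated graded, and the inductive hypothesis $f(W_{l+1})\subseteq W_{l+1}$ to control the "higher-order" terms in $f(\Phi(e^v)u)-f(u)$. So structurally the two arguments agree, down to the point where a single scalar/graded comparison must force the offending low-level component of $f(u)$ to vanish.

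There is, however, a genuine gap in your proposal precisely at the step you flag as the "main obstacle," and the concrete mechanism you suggest for closing it does not work. First, a small indexing slip: you should extract the level-$(m+1)$ component of $\Phi(e^v)f(u)-f(u)=f(\Phi(e^v)u)-f(u)$, not the level-$m$ component (the latter vanishes automatically because $\Phi(e^v)$ acts as the identity on the graded pieces). Second, and more seriously, specializing to $v=\eta_i$ and asserting that the resulting linear equations "force all $c_{\mathbf j}=0$" is false in general: the one-slot push-up map sending $f(u)_m$ to the level-$(m+1)$ part of $\Phi(e^v)f(u)_m$ need not be injective, because distinct monomials at level $m$ can push up to the same monomial at level $m+1$ through different slots and cancel. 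A tiny illustration: with $r=2$, the level-$1$ vector $\eta_1\otimes\eta-\eta\otimes\eta_1$ has one-slot push-up $\eta_1\otimes v - v\otimes\eta_1$, which is identically zero for $v$ proportional to $\eta_1$; so the level-$2$ equations alone give only the relation $c_1+c_2=0$, not $c_1=c_2=0$. Reading off equations at level $m+1$ for basis vectors $v=\eta_i$ therefore yields relations among the $c_{\mathbf j}$, not their vanishing. What the paper does at this point is different in spirit: it chooses a single \emph{generic} $v^*\in V$ outside a finite "bad set" $\bigcup_l I_l$ built from the coefficients $a_i$ of $f(w)$, and argues that for that $v^*$ the full difference $\sum_i a_i(\widetilde\eta_{\mathbf j_i}-\eta_{\mathbf j_i})$ does not lie in $W_{k+1}$. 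To repair your version you would need to replace the specialization $v=\eta_i$ by a genuinely generic $v$ (avoiding the finitely many hyperplanes where cancellation occurs), and more importantly track contributions at all intermediate levels $m+1,\dots,l$ rather than just level $m+1$, since a nontrivial $f(u)_m$ can have its first-order push-up cancel while leaving a nonzero higher-order trace. As written, the linchpin of your argument is an assertion, not a proof, and the specific filling-in you sketch is insufficient.
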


\begin{proof}
	Since Proposition \ref{key decom}(1) is valid for $l=r+1$ (note that $\underline{V}_{r+1}^{\otimes r}=0$). Now we suppose that Proposition \ref{key decom}(1) is valid for $l \geq k+1$ (with $0 \leq k \leq r$), we just need to show that Proposition \ref{key decom}(1) is valid when $l=k$ (by induction on $l$). Supoose $f \in \text{End}_{\underline{G}}(\underline{V}^{\otimes r})$. If $k+1 \leq l \leq r$, then $f(\underline{V}_l^{\otimes r}) \subseteq W_l$ by inductive hypotheses (note that $\underline{V}_{r+1}^{\otimes r}=0$). Now we just need to prove that $f(\underline{V}_k^{\otimes r}) \subseteq W_k$.
	 
	 For any nonzero $w \in \underline{V}_k^{\otimes r}$ and suppose $f(w) \neq 0$ as well. Keep in mind that all $\eta_\textbf{i}=\eta_{i_1} \otimes \cdots \otimes \eta_{i_r}$ for $\textbf{i}=\{i_1,\cdots,i_r\} \in (\underline{n+1})^r$ form a basis of $\underline{V}^{\otimes r}$ (see Section \ref{1.2}). Then we write
	 \begin{align}\label{2.1}
	 f(w)=\sum_{i=1}^t a_i \eta_{\textbf{j}_i} \in \underline{V}^{\otimes r}
	 \end{align}
	 with $\textbf{j}_i \in (\underline{n+1})^r$ and $a_{i} \in \mathbb{C}^{\times}$ ($1 \leq i \leq t$).
	 
	 By the assumption, $(f\circ \Phi(\underline{g}))(w)=(\Phi(\underline{g}) \circ f)(w)$ for any $\underline{g}:=(g,v) \in \underline{G}$ with $g \in G$ and $v \in V$. In particular, we take some special element $(e,v) \in \underline{G}$, denoted by $e^v$. Then we have an equation \begin{align}\label{commute}
	 	(f\circ \Phi(e^v))(w)=(\Phi(e^v) \circ f)(w).
	 \end{align}
	 
	 Suppose $w_1 \otimes w_2 \otimes \cdots \otimes w_k \otimes \eta^{r-k} \in \underline{V}_k^{\otimes r}$ with $w_q \in V$ ($\forall 1 \leq q \leq k$). Since $e^v(\eta)=v+\eta$ and $e^v.x=x$ for all $x \in V$ (by (\ref{def of module})), then 
	 \begin{align}\label{jisuan1}
	 \Phi(e^v)(w_1 \otimes w_2 \otimes \cdots \otimes w_k \otimes \eta^{r-k})=w_1 \otimes w_2 \otimes \cdots \otimes w_k \otimes (v+\eta)^{r-k}.
	 \end{align}
	Thanks to $f(\underline{V}_l^{\otimes r}) \subseteq W_l$ with $k+1 \leq l \leq r$, we conclude that 
	\begin{align}\label{jisuan2}
	f(w_1 \otimes w_2 \otimes \cdots \otimes w_k \otimes (v+\eta)^{r-k}) \in f(w_1 \otimes w_2 \otimes \cdots \otimes w_k \otimes \eta^{r-k})+W_{k+1}.
	\end{align}
	Note that $\underline{V}_l^{\otimes r}=0$ with $l>r$ and $W_{k+1}=\bigoplus_{l=k+1}^{r}\underline{V}_l^{\otimes r}=0$ with $k=r$.
	Hence (\ref{jisuan1}) implies that 
	\begin{align}\label{jisuan3}
	(f \circ \Phi(e^v))(w_1 \otimes w_2 \otimes \cdots \otimes w_k \otimes \eta^{r-k}) \in f(w_1 \otimes w_2 \otimes \cdots \otimes w_k \otimes \eta^{r-k})+W_{k+1}.
	\end{align}
	So (\ref{jisuan3}) yields that  
	 	\begin{align}\label{jisuan4}
	 (f \circ \Phi(e^v))(w) \in f(w)+W_{k+1}=\sum_{i=1}^t a_i \eta_{\textbf{j}_i}+\bigoplus_{l=k+1}^{r}\underline{V}_l^{\otimes r}.
	 \end{align}
	 
	 Obviously, (\ref{2.1}) implies that 
	 \begin{align}\label{jisuan5}
	 (\Phi(e^v)\circ f)(w))=\Phi(e^v)(f(w))=\sum_{i=1}^t a_i \Phi(e^v)(\eta_{\textbf{j}_i}).
	 \end{align}
	 By comparing (\ref{commute}), (\ref{jisuan4}) and (\ref{jisuan5}), we have 
	 \begin{align}\label{jisuan6}
	 \sum_{i=1}^t a_i \Phi(e^v)(\eta_{\textbf{j}_i}) \in \sum_{i=1}^t a_i \eta_{\textbf{j}_i}+\bigoplus_{l=k+1}^{r}\underline{V}_l^{\otimes r}.
	 \end{align}
	 Suppose $\textbf{j}_i=\{j_i^1,\cdots,j_i^r\} \in (\underline{n+1})^r$ with $1 \leq i \leq t$. For $1 \leq l \leq r$, let \begin{equation*} \widetilde{\eta}_{j_i^l}:=\begin{cases}{\eta}_{{j_i^l}}&\mbox{,if ${\eta}_{{j_i^l}} \neq \eta$;}\\
	 v+\eta&\mbox{,if ${\eta}_{{j_i^l}}=\eta$.} 
	 \end{cases} 
	 \end{equation*}
	Set $\widetilde{\eta}_{\textbf{j}_i}:=\widetilde{\eta}_{{j_i^1}} \otimes \cdots \otimes \widetilde{\eta}_{j_i^r}$ with $1 \leq i \leq t$. Clearly, $e^v.{\eta}_{j_i^r}=\widetilde{\eta}_{j_i^r}$ (By (\ref{def of module})). Thus $\Phi(e^v)(\eta_{\textbf{j}_i})=\widetilde{\eta}_{\textbf{j}_i}$, so we have 
	\begin{align}\label{jisuan7}
	\sum_{i=1}^t a_i (\widetilde{\eta}_{\textbf{j}_i}-\eta_{\textbf{j}_i}) \in \bigoplus_{l=k+1}^{r}\underline{V}_l^{\otimes r}.
	\end{align}
	Without loss of generality, we can assume that $\eta_{\textbf{j}_1} \in \underline{V}_h^{\otimes r}$ with $0 \leq h <k$ (Here we assume that $k>0$; if $k=0$, $f(\underline{V}_k^{\otimes r}) \subseteq W_k$ is clear), i.e. $h=\sharp\{l \mid j_i^l \neq \eta \text{ with } 1 \leq l \leq r\}$. Set \begin{equation*} a_i^l:=\begin{cases}a_i&\mbox{,if ${\eta}_{{j_i^l}} \neq \eta$;}\\
	0&\mbox{,if ${\eta}_{{j_i^l}}=\eta$.} 
	\end{cases} 
	\end{equation*}
	with $1 \leq l \leq r$. Now assume that $I_l=\{-\sum_{i \in J}\frac{a_i^l}{a_1} \cdot {\eta}_{{j_i^l}} \mid J \subseteq \{2,3,\cdots,t\}\}$ with $1 \leq l \leq r$, $I_l$ is finite. Since $V$ is an infinite set and $\bigcup_{l=1}^rI_l$ is finite, then $V-\bigcup_{l=1}^rI_l$ is nonempty, i.e. $\exists v^* \in V-\bigcup_{l=1}^rI_l$. In the case $v=v^*$, we see that $\sum_{i=1}^t a_i (\widetilde{\eta}_{\textbf{j}_i}-\eta_{\textbf{j}_i}) \notin \bigoplus_{l=k+1}^{r}\underline{V}_l^{\otimes r}$, which contradicts (\ref{jisuan7}). So $\forall 1 \leq i \leq t$, we have $\eta_{\textbf{j}_i} \in \bigoplus_{l=k}^{r}\underline{V}_l^{\otimes r}=W_k$, hence $f(w)=\sum_{i=1}^t a_i \eta_{\textbf{j}_i} \in W_k$, i.e. $f(\underline{V}_k^{\otimes r}) \subseteq W_k$. By induction on $l$, we have $f(\underline{V}_l^{\otimes r}) \subseteq W_l$ ($\forall 0 \leq l \leq r+1$). Thus $f(\underline{V}_l^{\otimes r}) \subseteq W_l$ ($\forall 0 \leq l \leq r$).
	
	Since $f(\underline{V}_l^{\otimes r}) \subseteq W_l$ ($\forall 0 \leq l \leq r$), then $f(W_l) \subseteq W_l$ ($\forall 0 \leq l \leq r$).  
\end{proof}

Assume that $G$ is a subgroup of $\GL(V)$. Let $I,J \subseteq \underline{r}$ with $\sharp I=s$ and $\sharp J=t$ ($0 \leq s,t \leq r$). Obviously, we have $\underline{V}_I^{\otimes r} \cong V^{\otimes s}$ and $\underline{V}_J^{\otimes r} \cong V^{\otimes t}$ as isomorphisms of linear spaces. Hence, for $f \in \text{Hom}_\mathbb{C}(V^{\otimes s},V^{\otimes t})$, we can get $f^* \in \text{Hom}_\mathbb{C}(\underline{V}_I^{\otimes r},\underline{V}_J^{\otimes r})$. As a result, there is an isomorphism of vector spaces
\begin{align}\label{to Brauer case}
\text{Hom}_\mathbb{C}(V^{\otimes s},V^{\otimes t}) \xrightarrow{\cong}  \text{Hom}_\mathbb{C}(\underline{V}_I^{\otimes r},\underline{V}_J^{\otimes r}), \text{ with }f \mapsto f^*.
\end{align}

In fact, $\underline{V}_I^{\otimes r} \cong V^{\otimes s}$ and $\underline{V}_J^{\otimes r} \cong V^{\otimes t}$ are $G$-module isomorphisms. Hence for $f \in \text{Hom}_G(V^{\otimes s},V^{\otimes t})$, we have $f^* \in \text{Hom}_G(\underline{V}_I^{\otimes r},\underline{V}_J^{\otimes r})$. So there is an isomorphism of linear spaces: 
\begin{align}\label{to Brauer case 2}
T_{IJ}: \text{Hom}_G(V^{\otimes s},V^{\otimes t}) \xrightarrow{\cong} \text{Hom}_G(\underline{V}_I^{\otimes r},\underline{V}_J^{\otimes r}), \text{ with } T_{IJ}(f)=f^*.
\end{align}

By \cite[(4.1)]{GW}, we have the following isomorphisms of vector spaces
\begin{align}\label{G inv cong}
\text{Hom}_G(V^{\otimes s},V^{\otimes t}) \cong (V^{\otimes t} \otimes (V^{\otimes s})^*)^G \cong (V^{\otimes t} \otimes (V^*)^{\otimes s})^G.
\end{align}

From now on until the end of this section, we assume that $G=\text{O}(V)$ or $G=\text{Sp}(V)$ (In this case, $n=\dim V$ is even), and suppose $G$ leaves invariant a nondegenerate bilinear form $\omega$. Fix a basis $\{f_p:1 \leq p \leq n\}$ for $V$ and let $\{f^p:1 \leq p \leq n\}$ be the dual basis for $V$ relative to the form $\omega$. It is not hard to see that $\omega$ induces a $G$-module isomorphism $V \cong V^*$. Hence 
\begin{align}\label{G inv cong 2}
\text{Hom}_G(V^{\otimes s},V^{\otimes t}) \cong (V^{\otimes t} \otimes (V^*)^{\otimes s})^G \cong (V^{\otimes (s+t)})^G.
\end{align}

Combine (\ref{G inv cong 2}) with (\ref{to Brauer case 2}), the following isomorphism of linear spaces holds.
\begin{align}\label{yield 0}
\text{Hom}_G(\underline{V}_I^{\otimes r},\underline{V}_J^{\otimes r}) \cong (V^{\otimes (s+t)})^G.
\end{align}

\begin{lemma}(\cite[Theorem 5.3.3]{GW})\label{BCD inv}
	If $m$ is odd, them $(V^{\otimes m})^G=0$.
\end{lemma}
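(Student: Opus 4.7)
The plan is to leverage the fact that the scalar transformation $-\text{id}_V$ lies in $G$ for both choices $G = \text{O}(V)$ and $G = \text{Sp}(V)$. Indeed, for any bilinear form $\omega$ on $V$ one has $\omega(-v, -w) = \omega(v, w)$, so $-\text{id}_V$ preserves both symmetric and skew-symmetric forms. Consequently $-\text{id}_V \in G$ in either case.

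Next, I would compute the action of $-\text{id}_V$ on $V^{\otimes m}$ via the tensor representation defined in \eqref{tensor action}. On a monomial $v_1 \otimes \cdots \otimes v_m$ this yields $(-1)^m (v_1 \otimes \cdots \otimes v_m)$, so $\Phi(-\text{id}_V)$ acts as the scalar $(-1)^m$ on all of $V^{\otimes m}$.

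Finally, given any $u \in (V^{\otimes m})^G$ with $m$ odd, $G$-invariance forces $u = \Phi(-\text{id}_V)(u) = -u$, hence $2u = 0$, and therefore $u = 0$ since we are working over $\mathbb{C}$. This shows $(V^{\otimes m})^G = 0$. No step here is genuinely delicate: the entire proof reduces to the single observation that the central element $-\text{id}_V$ belongs to $G$, so there is no real obstacle to overcome.
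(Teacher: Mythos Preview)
Your argument is correct. The paper does not actually supply a proof of this lemma; it simply cites \cite[Theorem 5.3.3]{GW}, which is the First Fundamental Theorem of invariant theory for $\text{O}(V)$ and $\text{Sp}(V)$ and describes \emph{all} of $(V^{\otimes m})^G$ (in particular showing it vanishes for odd $m$). Your route is far more elementary: rather than invoking the full description of tensor invariants, you observe that the central element $-\text{id}_V$ lies in $G$ and acts by $(-1)^m$ on $V^{\otimes m}$, which immediately kills odd-degree invariants. The advantage of your approach is that it is self-contained and requires no external input; the advantage of the citation is that the paper later uses the full structure of $\mathcal{B}_r(\varepsilon n)$ from \cite{GW} anyway, so the reference is already in play.
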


Given $I,J \subseteq \underline{r}$ satisfying $\sharp I=\sharp J$. Keep in mind that $E_{IJ}$ is defined in (\ref{E element}).

Convention: $E_{IJ}$ can be regarded as an element in $\text{Hom}_\mathbb{C}(\underline{V}_J^{\otimes r},\underline{V}_I^{\otimes r})$ with $\sharp I=\sharp J$, that is $E_{IJ}|_{\underline{V}_J^{\otimes r}}$. For convenience, we still use $E_{IJ}$ instead of $E_{IJ}|_{\underline{V}_J^{\otimes r}}$ if the context is clear. It is easy to check that $E_{IJ}$ is a $G$-module isomorphism. Moreover, we have $T^{-1}_{JI}(E_{IJ})=\text{id}_{V^{\otimes l}}$ (see \eqref{to Brauer case 2}). 

Given $I \subsetneq J \subseteq \underline{r}$, and assume that $\{i,j\}=J-I$ with $i <j$. Then $\sharp I+2=s+2=t=\sharp J$. We can define two maps as follows: 

$\forall c \in \mathbb{C}$. For $i \in I$, $v_i \in V$; for $i \notin I$, $v_i=\eta$. Then $v_1 \otimes \cdots \otimes v_r \in \underline{V}_I^{\otimes r}$. Now we define 
\begin{align*}
\mathcal{D}_{ij}^{IJ}:\underline{V}_I^{\otimes r} \rightarrow \underline{V}_J^{\otimes r}
\end{align*}
\begin{align}
cv_1 \otimes \cdots \otimes v_r \mapsto c\sum_{p=1}^n v_1 \otimes \cdots \otimes \underbrace{f_p}_{i\text{ th}} \otimes \cdots \otimes \underbrace{f^p}_{j\text{ th}} \otimes \cdots \otimes v_r. 
\end{align}
$\forall c \in \mathbb{C}$. For $i \in J$, $v_i \in V$; for $i \notin J$, $v_i=\eta$. Then $v_1 \otimes \cdots \otimes v_r \in \underline{V}_J^{\otimes r}$. Now we define
\begin{align*}
\mathcal{C}_{ij}^{IJ}:\underline{V}_J^{\otimes r} \rightarrow \underline{V}_I^{\otimes r}
\end{align*}
\begin{align}
cv_1 \otimes \cdots \otimes v_r \mapsto c\omega(v_i,v_j) v_1 \otimes \cdots \otimes \underbrace{\eta}_{i\text{ th}} \otimes \cdots \otimes \underbrace{\eta}_{j\text{ th}} \otimes \cdots \otimes v_r. 
\end{align}

By (\ref{to Brauer case}), $\mathcal{D}_{ij}^{IJ}$ can be viewed as a map from $V^{\otimes s}$ to $V^{\otimes t}$ which is denoted by $\mathcal{D}$. Using (\ref{D action}), there exists $1 \leq p<q \leq s$, such that $\mathcal{D}=D_{pq}$ (where $D_{pq}$ is defined by (\ref{D action})). Since $D_{pq}$ is a linear map, $\text{Hom}_\mathbb{C}(V^{\otimes s},V^{\otimes t}) \cong \text{Hom}_\mathbb{C}(\underline{V}_I^{\otimes r},\underline{V}_J^{\otimes r})$ yields that $\mathcal{D}_{ij}^{IJ}$ is also a linear map. Similarly, we can also conclude that $\mathcal{C}_{ij}^{IJ} \in \text{Hom}_\mathbb{C}(\underline{V}_J^{\otimes r},\underline{V}_I^{\otimes r})$. Moreover, it is easy to check that both of $\mathcal{C}_{ij}^{IJ}$ and $\mathcal{D}_{ij}^{IJ}$ are $G$-module homomorphisms. 

The following lemma is clear.
\begin{lemma}\label{def of CD 1}
	Assume that $I \subseteq J \subseteq \underline{r}$ satisfying $J-I= \{i_1,j_1,i_2,j_2\}$ with $i_1 < j_1$ and $i_2 < j_2$. Set $J_t=I \cup \{i_t,j_t\}$ with $t=1,2$. Then 
	\begin{align*}
	\mathcal{D}_{i_2j_2}^{J_1J}\mathcal{D}_{i_1j_1}^{IJ_1}=\mathcal{D}_{i_1j_1}^{J_2J}\mathcal{D}_{i_2j_2}^{IJ_2}, \text{    } \mathcal{C}_{i_2j_2}^{IJ_2}\mathcal{C}_{i_1j_1}^{J_2J}=\mathcal{C}_{i_1j_1}^{IJ_1}\mathcal{C}_{i_2j_2}^{J_1J}.
	\end{align*}
\end{lemma}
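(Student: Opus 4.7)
Both identities assert that $\mathcal{D}$-expansions (respectively $\mathcal{C}$-contractions) at the two disjoint index pairs $\{i_1,j_1\}$ and $\{i_2,j_2\}$ commute; disjointness is forced by the hypothesis $J-I=\{i_1,j_1,i_2,j_2\}$. The strategy is therefore to evaluate each side on a typical pure tensor in the relevant subspace, using that each elementary operator only alters the two tensor slots indexed by its own pair and acts as the identity on the remaining slots.

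For the first identity, fix a pure tensor $w=v_1\otimes\cdots\otimes v_r\in\underline{V}_I^{\otimes r}$, so that $v_k\in V$ for $k\in I$ and $v_k=\eta$ otherwise; in particular $v_{i_1}=v_{j_1}=v_{i_2}=v_{j_2}=\eta$. Applying $\mathcal{D}_{i_1j_1}^{IJ_1}$ replaces the $\eta$'s at positions $i_1,j_1$ by $\sum_p f_p\otimes f^p$ placed at those two slots, landing in $\underline{V}_{J_1}^{\otimes r}$; subsequently applying $\mathcal{D}_{i_2j_2}^{J_1J}$ performs the analogous substitution at positions $i_2,j_2$ with a fresh summation index $q$. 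Because the two substitutions act at disjoint slots, the resulting element is the double sum
\[
\sum_{p,q}v_1\otimes\cdots\otimes\underbrace{f_p}_{i_1\text{th}}\otimes\cdots\otimes\underbrace{f^p}_{j_1\text{th}}\otimes\cdots\otimes\underbrace{f_q}_{i_2\text{th}}\otimes\cdots\otimes\underbrace{f^q}_{j_2\text{th}}\otimes\cdots\otimes v_r,
\]
which is manifestly symmetric under exchange of the two operations. Applying $\mathcal{D}_{i_2j_2}^{IJ_2}$ followed by $\mathcal{D}_{i_1j_1}^{J_2J}$ produces exactly the same expression, giving the first identity.

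For the second identity, fix a pure tensor $w=v_1\otimes\cdots\otimes v_r\in\underline{V}_J^{\otimes r}$, so that $v_k\in V$ for all $k\in J$. Applying $\mathcal{C}_{i_1j_1}^{J_2J}$ multiplies by the scalar $\omega(v_{i_1},v_{j_1})$ and replaces $v_{i_1},v_{j_1}$ by $\eta$'s, landing in $\underline{V}_{J_2}^{\otimes r}$; then $\mathcal{C}_{i_2j_2}^{IJ_2}$ multiplies by $\omega(v_{i_2},v_{j_2})$ and replaces $v_{i_2},v_{j_2}$ by $\eta$'s. The common output is $\omega(v_{i_1},v_{j_1})\omega(v_{i_2},v_{j_2})$ times the pure tensor with $\eta$ at the four positions $i_1,j_1,i_2,j_2$ and $v_k$ elsewhere. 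By commutativity of scalar multiplication and disjointness of the two index pairs, the reverse order $\mathcal{C}_{i_1j_1}^{IJ_1}\mathcal{C}_{i_2j_2}^{J_1J}$ yields the same element.

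No substantive obstacle is anticipated: the entire content is that operations acting on disjoint tensor slots commute. Alternatively, transporting both sides through the isomorphism $T_{IJ}$ of \eqref{to Brauer case 2} reduces the claim to the analogous commutation relations for $D_{pq}$ and $C_{pq}$ on $V^{\otimes s}$, which are immediate from \eqref{C action} and \eqref{D action} and whose composite version is already recorded in Lemma \ref{Brauer cal}. Either route is routine; the only minor book-keeping is the translation of indices under the identification $\underline{V}_I^{\otimes r}\cong V^{\otimes s}$.
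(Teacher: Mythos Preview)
Your proposal is correct. The paper itself offers no proof beyond declaring the lemma ``clear'', and your evaluation on pure tensors (together with the observation that the two index pairs are disjoint) is precisely the routine verification that justifies that claim.
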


	Assume that $I \subseteq J \subseteq \underline{r}$ satisfying $J-I= \{i_1,j_1,\cdots,i_t,j_t\}$ with $i_p<j_p$ ($p=1,\cdots,t$). Let $\Pi:=\{\{i_1,j_1\},\cdots,\{i_t,j_t\}\}$. Set $J_0:=I$ and $J_s:=J_{s-1} \cup \{i_s,j_s\}$ with $s=1,\cdots,t$, then $J=J_t$. Lemma \ref{def of CD 1} impies that $\mathcal{D}_\Pi^{IJ}:=\mathcal{D}_{i_tj_t}^{J_{t-1}J_t} \cdots \mathcal{D}_{i_1j_1}^{J_0J_1}$ and $\mathcal{C}_\Pi^{IJ}:=\mathcal{C}_{i_1j_1}^{J_0J_1} \cdots \mathcal{C}_{i_tj_t}^{J_{t-1}J_t}$ can be defined independently of the order of
	the product. Since $\mathcal{C}_{i_sj_s}^{J_{s-1}J_s}$ and $\mathcal{D}_{i_sj_s}^{J_{s-1}J_s}$ are $G$-modules, then both of $\mathcal{C}_\Pi^{IJ}$ and $\mathcal{D}_\Pi^{IJ}$ are $G$-modules. In the case $\Pi=\phi$ (i.e. $I=J$), then $\mathcal{D}_\Pi^{IJ}=\mathcal{C}_\Pi^{IJ}=\text{id}_{\underline{V}_I^{\otimes r}}$.
	
	Now it is a position to investigate the structure of $\text{End}_G(\underline{V}^{\otimes r})$.
	
	 For $0 \leq s,t \leq r$, it is clear that $\text{Hom}_G(\underline{V}_s^{\otimes r},\underline{V}_t^{\otimes r})$ is a linear subspace of $\text{End}_G(\underline{V}^{\otimes r})$. In particular, $\text{End}_G(\underline{V}_s^{\otimes r})$ is a subalgebra of $\text{End}_G(\underline{V}^{\otimes r})$. 
	 
	 Keep in mind that $\underline{V}^{\otimes r}=\bigoplus_{l=0}^r \underline{V}_l^{\otimes r}$ and $\underline{V}_l^{\otimes r}=\bigoplus\limits_{I \subseteq \underline{r},~ \sharp I=l}\underline{V}_I^{\otimes r}$, then we decompose the algebra $\text{End}_G(\underline{V}^{\otimes r})$ as followings: 
	As vector spaces, 
\begin{align}\label{End decom 1}
\text{End}_G(\underline{V}^{\otimes r}) \cong \bigoplus_{s=0}^r\bigoplus_{t=0}^r\text{Hom}_G(\underline{V}_s^{\otimes r},\underline{V}_t^{\otimes r}),
\end{align}
\begin{align}\label{Hom decom 1}
\text{Hom}_G(\underline{V}_s^{\otimes r},\underline{V}_t^{\otimes r}) \cong \underset{I \subseteq \underline{r} \atop \sharp I=s}{\bigoplus}\underset{J \subseteq \underline{r} \atop \sharp J=t}{\bigoplus}\text{Hom}_G(\underline{V}_I^{\otimes r},\underline{V}_J^{\otimes r}).
\end{align}

Similarly, we have the following statements
\begin{align}\label{End decom 2}
\text{End}_{G \times \textbf{G}_m}(\underline{V}^{\otimes r}) \cong \bigoplus_{s=0}^r\bigoplus_{t=0}^r\text{Hom}_{G \times \textbf{G}_m}(\underline{V}_s^{\otimes r},\underline{V}_t^{\otimes r}),
\end{align}
\begin{align}\label{Hom decom 2}
\text{Hom}_{G \times \textbf{G}_m}(\underline{V}_s^{\otimes r},\underline{V}_t^{\otimes r}) \cong \underset{I \subseteq \underline{r} \atop \sharp I=s}{\bigoplus}\underset{J \subseteq \underline{r} \atop \sharp J=t}{\bigoplus}\text{Hom}_{G \times \textbf{G}_m}(\underline{V}_I^{\otimes r},\underline{V}_J^{\otimes r}).
\end{align}

Now fix $I,J \subseteq \underline{r}$, set $s=\sharp I$ and $t=\sharp J$. Due to (\ref{End decom 1}) and (\ref{Hom decom 1}), we just need to investigate $\text{Hom}_G(\underline{V}_I^{\otimes r},\underline{V}_J^{\otimes r})$.

\textbf{Case 1. $s+t$ is odd.} 

Note that $s+t$ is odd implies that $s \neq t$.

Thanks to Lemma \ref{BCD inv} and (\ref{yield 0}), we have
\begin{align}\label{B_{IJ} odd}
	\text{Hom}_G(\underline{V}_I^{\otimes r},\underline{V}_J^{\otimes r})=0.
\end{align}

Futhermore, (\ref{Hom decom 1}) yields that  \begin{align}\label{Bst odd}
\text{Hom}_G(\underline{V}_s^{\otimes r},\underline{V}_t^{\otimes r})=\underset{I \subseteq \underline{r} \atop \sharp I=s}{\bigoplus}\underset{J \subseteq \underline{r} \atop \sharp J=t}{\bigoplus}\text{Hom}_G(\underline{V}_I^{\otimes r},\underline{V}_J^{\otimes r})=0. 
\end{align}

Hence $\text{Hom}_{G \times \textbf{G}_m}(\underline{V}_s^{\otimes r},\underline{V}_t^{\otimes r})=0$.

\textbf{Case 2. $s=t$.}

Set $\mathcal{B}_{IJ}:=E_{JI}\cdot\mathcal{B}_s^{(I)}(\varepsilon n)=\{E_{JI}\circ\lambda \mid \lambda \in \mathcal{B}_s^{(I)}(\varepsilon n)\}$.
 In particular, $B_{II}=\mathcal{B}_s^{(I)}(\varepsilon n)$.

For any $f \in \text{End}_\mathbb{C}(V^{\otimes s})$, we can also use (\ref{def of action}) to define $f^I$. Obviously, for $K \subseteq \underline{r}$ with $K \neq I$, $f^I(\underline{V}_K^{\otimes r})=0$; while $f^I(\underline{V}_I^{\otimes r}) \subseteq \underline{V}_I^{\otimes r}$, thus $f^I|_{\underline{V}_I^{\otimes r}} \in \text{End}_\mathbb{C}(\underline{V}_I^{\otimes r})$.For convenience, we always use $f^I$ instead of $f^I|_{\underline{V}_I^{\otimes r}}$, if the context is clear.

By (\ref{to Brauer case 2}), $T_{II}$ gives an isomorphism of vector spaces between $\text{End}_G(V^{\otimes s})$ and $\text{End}_G(\underline{V}_I^{\otimes r})$. It is not hard to see that $\forall f \in \text{End}_G(V^{\otimes s})$, we have $T_{II}(f)=f^I$, thus $\text{End}_G(\underline{V}_I^{\otimes r})=\{f^I \mid f \in \text{End}_G(V^{\otimes s})\}$. 

Note that $\text{End}_G(V^{\otimes s})=\mathcal{B}_s(\varepsilon n)$, $\text{End}_G(\underline{V}_I^{\otimes r})=\{f^I \mid f \in \text{End}_G(V^{\otimes s})\}$ implies that
\begin{align}\label{G inv II}
\text{End}_G(\underline{V}_I^{\otimes r})=\mathcal{B}_s^{(I)}(\varepsilon n).
\end{align}

\begin{lemma}\label{B_II 2}
	Assume $s=t$, then $\text{Hom}_G(\underline{V}_I^{\otimes r},\underline{V}_J^{\otimes r})=\mathcal{B}_{IJ}.$
\end{lemma}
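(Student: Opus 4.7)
The plan is to leverage the two facts already established just above the statement: first, by equation (\ref{G inv II}) we have $\text{End}_G(\underline{V}_I^{\otimes r})=\mathcal{B}_s^{(I)}(\varepsilon n)$; second, the convention recorded after (\ref{yield 0}) tells us that $E_{JI}$, viewed as a map $\underline{V}_I^{\otimes r}\to \underline{V}_J^{\otimes r}$, is a $G$-module isomorphism (its inverse under $T_{IJ}$ is $\text{id}_{V^{\otimes s}}$). These two inputs reduce the statement to a trivial transport of structure along an isomorphism.

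More precisely, the first step will be to observe that since $E_{JI}\colon \underline{V}_I^{\otimes r}\to \underline{V}_J^{\otimes r}$ is a $G$-equivariant isomorphism, left-composition
\begin{align*}
L_{E_{JI}}\colon \text{End}_G(\underline{V}_I^{\otimes r})\longrightarrow \text{Hom}_G(\underline{V}_I^{\otimes r},\underline{V}_J^{\otimes r}),\qquad \lambda\mapsto E_{JI}\circ \lambda,
\end{align*}
is a linear bijection (its inverse is $\varphi\mapsto E_{JI}^{-1}\circ \varphi=E_{IJ}\circ \varphi$, using that $E_{IJ}$ inverts $E_{JI}$ on the relevant summands). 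This will be the only genuinely ``conceptual'' step, and it is immediate from the isomorphism property of $E_{JI}$.

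Next, substituting $\text{End}_G(\underline{V}_I^{\otimes r})=\mathcal{B}_s^{(I)}(\varepsilon n)$ into the left-hand side, the image of $L_{E_{JI}}$ is exactly $\{E_{JI}\circ \lambda\mid \lambda\in \mathcal{B}_s^{(I)}(\varepsilon n)\}$, which is by definition $\mathcal{B}_{IJ}$. Combining this with the bijectivity of $L_{E_{JI}}$ yields $\text{Hom}_G(\underline{V}_I^{\otimes r},\underline{V}_J^{\otimes r})=\mathcal{B}_{IJ}$, as desired.

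I do not anticipate any real obstacle: the proof is essentially a one-line argument once one packages the two previously established facts. The only thing to be pedantically careful about is the direction of composition and the identification conventions for $E_{IJ}$ as an element of $\text{Hom}_\mathbb{C}(\underline{V}_J^{\otimes r},\underline{V}_I^{\otimes r})$ versus as an element of $\text{End}_\mathbb{C}(\underline{V}^{\otimes r})$, so that the equality $\mathcal{B}_{IJ}=E_{JI}\cdot \mathcal{B}_s^{(I)}(\varepsilon n)$ is interpreted consistently with the direction $\underline{V}_I^{\otimes r}\to \underline{V}_J^{\otimes r}$.
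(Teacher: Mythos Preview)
Your proposal is correct and follows essentially the same approach as the paper: both arguments use that $E_{JI}$ (with inverse $E_{IJ}$) is a $G$-module isomorphism to transport $\text{End}_G(\underline{V}_I^{\otimes r})=\mathcal{B}_s^{(I)}(\varepsilon n)$ onto $\text{Hom}_G(\underline{V}_I^{\otimes r},\underline{V}_J^{\otimes r})$ via left composition. The only cosmetic difference is that the paper writes out the two inclusions separately, whereas you package the same content as a single bijection $L_{E_{JI}}$.
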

\begin{proof}
	For $f \in \text{Hom}_G(\underline{V}_I^{\otimes r},\underline{V}_J^{\otimes r})$, the fact that $E_{IJ}$ is a $G$-module homomorphism shows that $E_{IJ}\circ f \in \text{End}_G(\underline{V}_I^{\otimes r})$. Applying (\ref{G inv II}), we have $E_{IJ}\circ f \in \mathcal{B}_s^{(I)}(\varepsilon n)$, which yields that $f \in E_{JI}\mathcal{B}_s^{(I)}(\varepsilon n)=\mathcal{B}_{IJ}$. Hence $\text{Hom}_G(\underline{V}_I^{\otimes r},\underline{V}_J^{\otimes r})\subseteq\mathcal{B}_{IJ}$. On the other hand, $G$-module homomorphism $E_{IJ}$ and $\mathcal{B}_s^{(I)}(\varepsilon n)=\text{End}_G(\underline{V}_I^{\otimes r})$ conclude that $\mathcal{B}_{IJ}=E_{JI}\mathcal{B}_s^{(I)}(\varepsilon n)$ is a $G$-module homomorphism, i.e. $\mathcal{B}_{IJ} \subseteq \text{Hom}_G(\underline{V}_I^{\otimes r},\underline{V}_J^{\otimes r})$.
\end{proof}

\begin{remark}
	$\forall f \in \text{End}_G(V^{\otimes s})$, we have $T_{IJ}(f)=E_{JI}f^I=f^JE_{JI}$.
\end{remark}

\begin{corollary}\label{B_ll}
	For $0 \leq s \leq r$, $\text{End}_G(\underline{V}_s^{\otimes r})=\mathcal{B}(\varepsilon,n,r)_s$.
\end{corollary}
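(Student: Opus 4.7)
The plan is to reduce the corollary to a block-by-block computation, using the decomposition $\underline{V}_s^{\otimes r}=\bigoplus_{I\subseteq\underline{r},\,\sharp I=s}\underline{V}_I^{\otimes r}$ and then comparing the resulting expression with the already-established formula \eqref{B_l 1} for $\mathcal{B}(\varepsilon,n,r)_l$.

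First I would dispose of the trivial case $s=0$: here $\underline{V}_0^{\otimes r}=\mathbb{C}\eta^{\otimes r}$ is one-dimensional, so $\text{End}_G(\underline{V}_0^{\otimes r})=\mathbb{C}\cdot\text{id}_{\mathbb{C}\eta^{\otimes r}}$, which by definition equals $\mathbb{C}E_{\phi,\phi}=\mathcal{B}(\varepsilon,n,r)_0$.

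For $1\leq s\leq r$, the decomposition of $\underline{V}_s^{\otimes r}$ into the summands $\underline{V}_I^{\otimes r}$ (with $\sharp I=s$) gives
\begin{align*}
\text{End}_G(\underline{V}_s^{\otimes r})=\underset{I\subseteq\underline{r},\,\sharp I=s}{\bigoplus}\underset{J\subseteq\underline{r},\,\sharp J=s}{\bigoplus}\text{Hom}_G(\underline{V}_I^{\otimes r},\underline{V}_J^{\otimes r}),
\end{align*}
which is the specialization of \eqref{End decom 1}--\eqref{Hom decom 1} to $s=t$ restricted to the layer $\underline{V}_s^{\otimes r}$. Now I would apply Lemma \ref{B_II 2} to each block, which identifies $\text{Hom}_G(\underline{V}_I^{\otimes r},\underline{V}_J^{\otimes r})=\mathcal{B}_{IJ}=E_{JI}\cdot\mathcal{B}_s^{(I)}(\varepsilon n)$.

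Finally I would compare with formula \eqref{B_l 1}, which states $\mathcal{B}(\varepsilon,n,r)_l=\bigoplus_{I,J}E_{IJ}\circ\mathcal{B}_l^{(J)}(\varepsilon n)$; after relabeling the dummy indices $I\leftrightarrow J$ the two direct-sum decompositions coincide term by term, which gives the desired equality. The content of the corollary is thus entirely packaged in Lemma \ref{B_II 2}, so there is no genuine obstacle; the step that deserves the most care is simply checking that the relabeling respects the structure (which it does, because the set over which one sums is symmetric in $I$ and $J$).
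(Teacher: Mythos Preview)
Your proof is correct and follows essentially the same route as the paper: decompose $\text{End}_G(\underline{V}_s^{\otimes r})$ via \eqref{Hom decom 1}, apply Lemma \ref{B_II 2} block by block, and compare with \eqref{B_l 1}. Your explicit treatment of the $s=0$ case and the index relabeling $I\leftrightarrow J$ (needed because $\mathcal{B}_{IJ}=E_{JI}\cdot\mathcal{B}_s^{(I)}(\varepsilon n)$ while \eqref{B_l 1} is written with $E_{IJ}\circ\mathcal{B}_l^{(J)}(\varepsilon n)$) are just careful bookkeeping that the paper leaves implicit.
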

\begin{proof}
	It follows from (\ref{Hom decom 1}) that $\text{End}_G(\underline{V}_s^{\otimes r})=\underset{I,J \subseteq \underline{r} \atop \sharp I=\sharp J=s}{\bigoplus}\text{Hom}_G(\underline{V}_I^{\otimes r},\underline{V}_J^{\otimes r})$. Hence Lemma \ref{B_II 2} implies that $\text{End}_G(\underline{V}_s^{\otimes r})=\underset{I,J \subseteq \underline{r} \atop \sharp I=\sharp J=s}{\bigoplus}\mathcal{B}_{IJ}$. On the other hand, $\mathcal{B}(\varepsilon,n,r)_s=\underset{I,J \subseteq \underline{r} \atop \sharp I=\sharp J=s}{\bigoplus}\mathcal{B}_{IJ}$ by (\ref{B_l 1}).
\end{proof}

\begin{remark}\label{type A dual}
	If $G=\GL(V)$, we can similarly prove $\text{End}_G(\underline{V}_s^{\otimes r})=D(n,r)_s$ with $0 \leq s \leq r$.
\end{remark}

\textbf{Case 3. $s+t$ is even with $s \neq t$.}

Note that $s-t$ is even. We give some definitions which will be used in the following text.

For $K \subseteq L \subseteq \underline{r}$ with $\sharp L-\sharp K$ is even, we define 
\begin{align*}
P(K,L):=\{\{\{i_1,j_1\},\cdots,\{i_t,j_t\}\} \mid L-K=\{i_1,j_1,\cdots,i_t,j_t\}\}
\end{align*}
In particular, if $K=L$, $P(K,L)$ is an empty set. 

Define $\mathcal{B}_{IJ}$ as follows: If $s<t$,
\begin{align}\label{s<t}
\mathcal{B}_{IJ}:=\sum_{I \subseteq J' \atop \sharp J'=t}\sum_{s \in \frak S_t}\sum_{U \in P(I,J')}\sum_{z \in Z_s}\mathbb{C}E_{JJ'}(\Psi_t(s))^{J'}\mathcal{D}_U^{IJ'}(\tau_z)^I,
\end{align}
where $(\Psi_t(s))^{J'}$ is defined by (\ref{def of action}) and (\ref{Sl}); $Z_s$ is the set consists of all the normalized Brauer diagram with $2s$ dots; $(\tau_z)^I$ is defined by Lemma \ref{Brauer cal} and (\ref{def of action}). 

while $s>t$, 
\begin{align}\label{s>t}
\mathcal{B}_{IJ}:=\sum_{J' \subseteq I \atop \sharp J'=t}\sum_{s \in \frak S_t}\sum_{z \in Z_t}\sum_{U \in P(J',I)}\mathbb{C}E_{JJ'}(\Psi_t(s))^{J'}(\tau_z)^{J'}\mathcal{C}_U^{J'I}.
\end{align}

If $z$ is the graph with each dot in the top row connected with the dot below it, then $\tau_z=\text{id}$.

\begin{proposition}\label{B_{IJ} 2}
	If $I,J \subseteq \underline{r}$ satisfying $\sharp I-\sharp J$ is nonzero and even, then 
	\begin{align*}
	\text{Hom}_G(\underline{V}_I^{\otimes r},\underline{V}_J^{\otimes r})=\mathcal{B}_{IJ}.
	\end{align*}
\end{proposition}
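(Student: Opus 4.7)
The strategy is to prove the two inclusions separately, with the reverse direction passing through the isomorphism $T_{IJ}: \text{Hom}_G(V^{\otimes s}, V^{\otimes t}) \xrightarrow{\sim} \text{Hom}_G(\underline{V}_I^{\otimes r}, \underline{V}_J^{\otimes r})$ of (\ref{to Brauer case 2}) together with the classical description of $\text{Hom}_G(V^{\otimes s}, V^{\otimes t})$, where $s = \sharp I$ and $t = \sharp J$.

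For $\mathcal{B}_{IJ} \subseteq \text{Hom}_G(\underline{V}_I^{\otimes r}, \underline{V}_J^{\otimes r})$, it suffices to verify that each generator is a composition of $G$-equivariant maps. In the case $s<t$, the operator $E_{JJ'}(\Psi_t(\sigma))^{J'}\mathcal{D}_U^{IJ'}(\tau_z)^I$ decomposes as $(\tau_z)^I \in \text{End}_G(\underline{V}_I^{\otimes r})$ (by (\ref{G inv II})), then $\mathcal{D}_U^{IJ'} \in \text{Hom}_G(\underline{V}_I^{\otimes r}, \underline{V}_{J'}^{\otimes r})$ (established in the construction of $\mathcal{D}_\Pi^{IJ}$ preceding Lemma \ref{def of CD 1}), then $(\Psi_t(\sigma))^{J'}$, which is $G$-equivariant on $\underline{V}_{J'}^{\otimes r}$ because the symmetric-group action commutes with the diagonal $G$-action, and finally the $G$-module isomorphism $E_{JJ'}: \underline{V}_{J'}^{\otimes r} \to \underline{V}_J^{\otimes r}$. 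The case $s>t$ is handled analogously, with $\mathcal{C}_U^{J'I}$ playing the role of $\mathcal{D}_U^{IJ'}$.

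The reverse inclusion is carried out through $T_{IJ}^{-1}$. The key ingredient is the mixed-tensor analogue of Lemma \ref{Brauer span}: for $s<t$ with $s+t$ even, every element of $\text{Hom}_G(V^{\otimes s}, V^{\otimes t})$ is a linear combination of operators of the form $\Psi_t(\sigma) \circ D_{p_1q_1} \cdots D_{p_hq_h} \circ \tau_z$, where $h=(t-s)/2$, $\tau_z \in \mathcal{B}_s(\varepsilon n)$, the $D_{p_mq_m}$'s are the expansion operators of (\ref{D action}), and $\sigma \in \frak S_t$ (and symmetrically for $s>t$). Applying $T_{IJ}$ to such an operator produces an element of $\mathcal{B}_{IJ}$: $\tau_z$ becomes $(\tau_z)^I$; the product of $D$'s becomes $\mathcal{D}_U^{IJ'}$ for some $J' \supseteq I$ with $\sharp J'=t$ recording where the new tensor factors land in $\underline{r}$ and $U \in P(I, J')$ recording their pairing; the permutation $\Psi_t(\sigma)$ becomes $(\Psi_t(\sigma))^{J'}$; and the final renumbering of positions from $J'$ to $J$ is effected by $E_{JJ'}$.

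The main obstacle is establishing the spanning statement for $\text{Hom}_G(V^{\otimes s}, V^{\otimes t})$ in the non-square case $s\neq t$. This is folklore flowing from Weyl's First Fundamental Theorem for $\text{O}(V)$ and $\text{Sp}(V)$ combined with the isomorphism (\ref{G inv cong 2}), but a clean derivation can proceed by induction on $|t-s|$: the adjunction induced by $\omega$ yields natural isomorphisms $\text{Hom}_G(V^{\otimes s}, V^{\otimes t}) \cong \text{Hom}_G(V^{\otimes(s+1)}, V^{\otimes(t-1)})$, reducing to the square case covered by Lemma \ref{Brauer span}. A minor bookkeeping point is to verify that the factorization can always be arranged so that the expansion operators occur after the Brauer-algebra factor $\tau_z$ rather than being interleaved; this is a consequence of the commutation relations among the $D_{pq}$'s and the elements of $\mathcal{B}_s(\varepsilon n)$.
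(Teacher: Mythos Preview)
Your argument is correct, and both directions are handled soundly. The forward inclusion matches the paper's one-line observation that each factor is $G$-equivariant. For the reverse inclusion, however, your route differs from the paper's.

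You reduce to the rectangular spanning statement for $\text{Hom}_G(V^{\otimes s},V^{\otimes t})$---every such morphism is a linear combination of $\Psi_t(\sigma)\circ(\text{expansions})\circ\tau_z$---and then transport through $T_{IJ}$. This is the conceptually clean path, but it requires you to supply that rectangular analogue of Lemma~\ref{Brauer span}, which the paper has not proved; your adjunction-induction sketch is a legitimate way to do it, though the bookkeeping (arranging all expansions to one side and matching the inserted positions to some $J'\supseteq I$) needs to be written out carefully. The paper instead avoids proving any new spanning lemma: it fixes a single $J'\supseteq I$ and a single $\Pi\in P(I,J')$, forms the square endomorphism $g=E_{J'J}\circ f\circ\mathcal{C}_\Pi^{IJ'}\in\text{End}_G(\underline{V}_{J'}^{\otimes r})$, applies the already-available Lemma~\ref{Brauer span} to $T_{J'J'}^{-1}(g)\in\mathcal{B}_t(\varepsilon n)$, and then argues that because $g$ factors through $\mathcal{C}_\Pi^{IJ'}$ only those basis elements $\Psi_t(s)\tau_z$ with $\text{Set}(z)$ matching $\Pi$ can contribute, from which the form~(\ref{Ef}) of $E_{J'J}\circ f$ follows. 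The paper's approach stays entirely within its established toolkit at the cost of a slightly delicate cancellation argument; yours is more transparent once the rectangular lemma is in hand, but shifts the work into proving that lemma.
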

\begin{proof}
	Let $\sharp I=s$ and $\sharp J=t$, we just assume that $s<t$. For the case $s>t$, the proof is similar. Assume that $f \in \text{Hom}_G(\underline{V}_I^{\otimes r},\underline{V}_J^{\otimes r})$, there exists $J' \subseteq \underline{r}$ such that $I \subseteq J'$ with $\sharp J'=t$. Take $\Pi \in P(I,J')$, now we consider $g=E_{J'J}\circ f \circ \mathcal{C}_\Pi^{IJ'} \in \text{End}_G(\underline{V}_{J'}^{\otimes r})$. By (\ref{to Brauer case 2}), we have $T^{-1}_{J'J'}(g) \in \text{End}_G(V^{\otimes t})=\mathcal{B}_t(\varepsilon n)$. Due to Lemma \ref{Brauer span}, $T^{-1}_{J'J'}(g)=\sum_{s \in \frak S_t}\sum_{z \in Z_t} a_{s,z}\Psi_t(s)\tau_z$ with $a_{s,z} \in \mathbb{C}$. It is clear that $g=(T^{-1}_{J'J'}(g))^{J'}$.
	
	Given $z=\{i_1,j_1\},\cdots,\{i_p,j_p\} \in Z_t$ with $p \leq [\frac{t}{2}]$ and let $I=\{\alpha_1,\cdots,\alpha_s\}$. Now we define $\text{Set}(z):=\{\alpha_{i_1},\alpha_{j_1},\cdots,\alpha_{i_p},\alpha_{j_p}\}$. Since $(T^{-1}_{J'J'}(g))^{J'}=(E_{J'J}\circ f) \circ \mathcal{C}_\Pi^{IJ'}$, the fact that $E_{J'J} \circ f$ is a linear map implies that $a_{s,z}=0$ for $\text{Set}(z) \neq \Pi$. So it is not hard to see 
	\begin{align}\label{Ef}
	E_{J'J}\circ f \in \sum_{s \in \frak S_t}\sum_{U \in P(I,J')}\sum_{z \in Z_s}\mathbb{C}(\Psi_t(s))^{J'}\mathcal{D}_U^{IJ'}(\tau_z)^I,
	\end{align}
which implies that $f \in \sum_{I \subseteq J' \atop \sharp J'=t}\sum_{s \in \frak S_t}\sum_{U \in P(I,J')}\sum_{z \in Z_s}\mathbb{C}E_{JJ'}(\Psi_t(s))^{J'}\mathcal{D}_U^{IJ'}(\tau_z)^I$. Hence we have $\text{Hom}_G(\underline{V}_I^{\otimes r},\underline{V}_J^{\otimes r}) \subseteq\mathcal{B}_{IJ}$. On the other hand, it is clear that $\mathcal{B}_{IJ}$ commutes with the action of $G$, i.e. $\mathcal{B}_{IJ} \subseteq \text{Hom}_G(\underline{V}_I^{\otimes r},\underline{V}_J^{\otimes r})$.
\end{proof}

\textbf{Describe $\text{End}_G(\underline{V}^{\otimes r})$.}

For $0 \leq s,t \leq r$ with $s+t$ is even, we define
\begin{align}\label{Bst def}
\mathcal{B}_{st}:=\bigoplus_{I \subseteq \underline{r} \atop \sharp I=s}\bigoplus_{J \subseteq \underline{r} \atop \sharp J=t}\mathcal{B}_{IJ}.
\end{align}
In particular, $\mathcal{B}_{ss}=\underset{I,J \subseteq \underline{r} \atop \sharp I=\sharp J=s}{\bigoplus}\mathcal{B}_{IJ}$. Hence $\mathcal{B}_{ss}=\mathcal{B}(\varepsilon,n,r)_s$. Then $\text{End}_G(\underline{V}_s^{\otimes r})=\mathcal{B}_{ss}=\underset{I,J \subseteq \underline{r} \atop \sharp I=\sharp J=s}{\bigoplus}\mathcal{B}_{IJ}$ (By Corollary \ref{B_ll}).

By (\ref{B_{IJ} odd}), Lemma \ref{B_II 2} and Proposition \ref{B_{IJ} 2}, the following corollary holds.

\begin{corollary}\label{B_{IJ} all}
	Let $G$ be $\text{O}(V)$ or $\text{Sp}(V)$ and assume that $I,J \subseteq \underline{r}$. Then
	\begin{align*}
	\text{Hom}_G(\underline{V}_I^{\otimes r},\underline{V}_J^{\otimes r})=\begin{cases}
	0, &  \text{if } \sharp I + \sharp J \text{ is odd;}\\
	\mathcal{B}_{IJ}, & \text{if } \sharp I + \sharp J \text{ is even.}
	\end{cases}
	\end{align*}
\end{corollary}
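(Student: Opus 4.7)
The statement is essentially a consolidation of the three cases treated immediately before it, so the plan is to simply assemble them. I would set $s = \sharp I$ and $t = \sharp J$ and split the argument according to the parity of $s+t$.

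First, suppose $s + t$ is odd. Then by the chain of isomorphisms in \eqref{G inv cong} and \eqref{G inv cong 2}, one has $\text{Hom}_G(\underline{V}_I^{\otimes r},\underline{V}_J^{\otimes r}) \cong (V^{\otimes(s+t)})^G$, which vanishes by Lemma \ref{BCD inv}. This gives the first branch of the case distinction and is exactly the content of \eqref{B_{IJ} odd}, so nothing new has to be proved here.

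Next, suppose $s + t$ is even. I would split this into the sub-case $s = t$ and the sub-case $s \neq t$. When $s = t$, the identification is carried out in Lemma \ref{B_II 2}, which uses the fact that $E_{IJ}$ is a $G$-module isomorphism to transport $\text{Hom}_G(\underline{V}_I^{\otimes r},\underline{V}_J^{\otimes r})$ into $\text{End}_G(\underline{V}_I^{\otimes r}) = \mathcal{B}_s^{(I)}(\varepsilon n)$. When $s \neq t$, the description is provided by Proposition \ref{B_{IJ} 2}, where one picks an intermediate set $J'$ of size $\max(s,t)$ containing $\min(I,J)$, composes with an appropriate $\mathcal{C}_\Pi$ or $\mathcal{D}_\Pi$ to land in $\text{End}_G(\underline{V}_{J'}^{\otimes r})$, and then applies the Brauer spanning Lemma \ref{Brauer span} combined with the isomorphism $T_{J'J'}$. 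In both sub-cases the resulting subspace is $\mathcal{B}_{IJ}$ by the respective definitions.

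There is essentially no obstacle left at this point; the only thing to verify is that in each of the three branches the answer is indeed packaged correctly by the definition of $\mathcal{B}_{IJ}$ given in Case~2 (for $s=t$) and in \eqref{s<t}--\eqref{s>t} (for $s\neq t$ with $s+t$ even). Thus the corollary follows by simply quoting \eqref{B_{IJ} odd}, Lemma \ref{B_II 2} and Proposition \ref{B_{IJ} 2} according to whether $s+t$ is odd, $s = t$, or $s \neq t$ with $s+t$ even. Since all three inputs are already proved, the corollary reduces to a one-line case split with no further calculation required.
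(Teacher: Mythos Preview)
Your proposal is correct and follows exactly the paper's approach: the paper simply states that the corollary holds ``By (\ref{B_{IJ} odd}), Lemma \ref{B_II 2} and Proposition \ref{B_{IJ} 2}'' without any further argument, and your write-up is just a (more detailed) assembly of these same three ingredients according to the obvious case split on the parity of $\sharp I+\sharp J$ and whether $\sharp I=\sharp J$.
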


\begin{corollary}\label{Bst all}
	Let $G$ be $\text{O}(V)$ or $\text{Sp}(V)$ and assume that $0 \leq s,t \leq r$. Then
	\begin{align*}
	\text{Hom}_G(\underline{V}_s^{\otimes r},\underline{V}_t^{\otimes r})=\begin{cases}
	0, &  \text{if } s+t \text{ is odd;}\\
	\mathcal{B}_{st}, & \text{if } s+t \text{ is even.}
	\end{cases}
	\end{align*}
	In particular, $\text{End}_G(\underline{V}_s^{\otimes r})=\mathcal{B}_{ss}=\mathcal{B}(\varepsilon,n,r)_s$.
\end{corollary}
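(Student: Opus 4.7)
The plan is to derive this corollary directly from the more refined Corollary \ref{B_{IJ} all}, which describes $\text{Hom}_G(\underline{V}_I^{\otimes r},\underline{V}_J^{\otimes r})$ for a single pair $(I,J)$, by summing over all subsets of the appropriate cardinalities.

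First, I would recall the decomposition (\ref{Hom decom 1}):
\begin{equation*}
\text{Hom}_G(\underline{V}_s^{\otimes r},\underline{V}_t^{\otimes r}) \cong \underset{I \subseteq \underline{r} \atop \sharp I=s}{\bigoplus}\underset{J \subseteq \underline{r} \atop \sharp J=t}{\bigoplus}\text{Hom}_G(\underline{V}_I^{\otimes r},\underline{V}_J^{\otimes r}).
\end{equation*}
The parity of $s+t$ matches the parity of $\sharp I + \sharp J$ for every pair $(I,J)$ appearing in this sum, which is the key observation that lets me apply Corollary \ref{B_{IJ} all} uniformly in each summand.

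Next, I would split into the two cases. If $s+t$ is odd, every summand $\text{Hom}_G(\underline{V}_I^{\otimes r},\underline{V}_J^{\otimes r})$ vanishes by Corollary \ref{B_{IJ} all}, so the total Hom space is zero. If $s+t$ is even, each summand equals $\mathcal{B}_{IJ}$ by Corollary \ref{B_{IJ} all}, and then by the very definition of $\mathcal{B}_{st}$ in \eqref{Bst def},
\begin{equation*}
\text{Hom}_G(\underline{V}_s^{\otimes r},\underline{V}_t^{\otimes r}) = \underset{I \subseteq \underline{r} \atop \sharp I=s}{\bigoplus}\underset{J \subseteq \underline{r} \atop \sharp J=t}{\bigoplus}\mathcal{B}_{IJ} = \mathcal{B}_{st}.
\end{equation*}

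Finally, for the special case $s=t$, the general formula gives $\text{End}_G(\underline{V}_s^{\otimes r}) = \mathcal{B}_{ss}$, and the identification $\mathcal{B}_{ss} = \mathcal{B}(\varepsilon,n,r)_s$ is already in hand via Corollary \ref{B_ll} (which combined the $s=t$ computation of $\mathcal{B}_{IJ}$ from Lemma \ref{B_II 2} with the decomposition \eqref{B_l 1} of $\mathcal{B}(\varepsilon,n,r)_s$). There is no real obstacle here: this corollary is purely a bookkeeping consequence of the preceding results, the only potentially delicate point being to verify that the external direct sum of $G$-module Hom spaces between the $\underline{V}_I^{\otimes r}$ really does recover $\text{Hom}_G(\underline{V}_s^{\otimes r},\underline{V}_t^{\otimes r})$, which is immediate from the fact that each $\underline{V}_I^{\otimes r}$ is a $G$-submodule (the $G$-action preserves the subset $I$).
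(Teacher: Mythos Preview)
Your proposal is correct and follows essentially the same approach as the paper: both use the decomposition \eqref{Hom decom 1} and then apply Corollary \ref{B_{IJ} all} to each summand, with the even case recombining via the definition \eqref{Bst def} and the special case $s=t$ invoking the already established identification $\mathcal{B}_{ss}=\mathcal{B}(\varepsilon,n,r)_s$.
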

\begin{proof}
	If $s+t$ is even, $\text{Hom}_G(\underline{V}_s^{\otimes r},\underline{V}_t^{\otimes r})=0$ follows from (\ref{Bst odd}). 
	
	Now assume that $s+t$ is odd. By Corollary \ref{B_{IJ} all}, (\ref{Hom decom 1}) and (\ref{Bst def}), we have
	\begin{align*}
	\text{Hom}_G(\underline{V}_s^{\otimes r},\underline{V}_t^{\otimes r})=\bigoplus_{I \subseteq \underline{r} \atop \sharp I=s}\bigoplus_{J \subseteq \underline{r} \atop \sharp J=t}\text{Hom}_G(\underline{V}_I^{\otimes r},\underline{V}_J^{\otimes r})=\bigoplus_{I \subseteq \underline{r} \atop \sharp I=s}\bigoplus_{J \subseteq \underline{r} \atop \sharp J=t}\mathcal{B}_{IJ}=\mathcal{B}_{st}.
	\end{align*} In particular, take $s=t$, then $\text{End}_G(\underline{V}_s^{\otimes r})=\mathcal{B}_{ss}=\mathcal{B}(\varepsilon,n,r)_s$.
\end{proof}

\begin{thm} (Restricted duality)\label{restricted}
	Let $G$ be $\text{O}(V)$ or $\text{Sp}(V)$. Then 
	\begin{align*}
	\text{End}_G(\underline{V}^{\otimes r})=\bigoplus_{0 \leq s,t \leq r \atop s+t \text{ is even}}\mathcal{B}_{st}=\left(\bigoplus_{s-t \neq 0 \text{ is even}}\mathcal{B}_{st}\right) \bigoplus \mathcal{B}(\varepsilon,n,r).
	\end{align*}
\end{thm}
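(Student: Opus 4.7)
The plan is to assemble the theorem directly from the pieces already established, with essentially no new computation. First I would observe that the $G$-action on $\underline{V}$ fixes $\eta$ and acts on $V$ via $\rho$, so each summand $\underline{V}_I^{\otimes r}$ (and therefore each coarser piece $\underline{V}_l^{\otimes r}=\bigoplus_{\sharp I=l}\underline{V}_I^{\otimes r}$) is $G$-stable. Consequently the decomposition $\underline{V}^{\otimes r}=\bigoplus_{l=0}^r \underline{V}_l^{\otimes r}$ as a $G$-module yields the block decomposition
\begin{align*}
\text{End}_G(\underline{V}^{\otimes r})=\bigoplus_{s=0}^r\bigoplus_{t=0}^r \text{Hom}_G(\underline{V}_s^{\otimes r},\underline{V}_t^{\otimes r}),
\end{align*}
which is exactly the identification already recorded in \eqref{End decom 1}. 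Note that in contrast to Proposition \ref{key decom}, which handled the $\underline{G}$-equivariant case, here the $G$-equivariance yields a finer decomposition because $G$ preserves all of the pieces $\underline{V}_l^{\otimes r}$ separately.

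Next I would feed each block into Corollary \ref{Bst all}: when $s+t$ is odd, $\text{Hom}_G(\underline{V}_s^{\otimes r},\underline{V}_t^{\otimes r})=0$ (this is the vanishing forced by Lemma \ref{BCD inv} via the isomorphism \eqref{yield 0}); when $s+t$ is even, this Hom space equals $\mathcal{B}_{st}$ as defined in \eqref{Bst def}. Substituting into the above and discarding the zero summands immediately yields the first equality
\begin{align*}
\text{End}_G(\underline{V}^{\otimes r})=\bigoplus_{0 \leq s,t \leq r \atop s+t \text{ is even}}\mathcal{B}_{st}.
\end{align*}

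Finally I would split the index set $\{(s,t):s+t \text{ even}\}$ into the diagonal $s=t$ and the off-diagonal $s\neq t$ with $s-t$ even. On the diagonal, Corollary \ref{Bst all} (equivalently Corollary \ref{B_ll}) gives $\mathcal{B}_{ss}=\mathcal{B}(\varepsilon,n,r)_s$, and Lemma \ref{B decom} assembles these into $\bigoplus_{s=0}^r \mathcal{B}(\varepsilon,n,r)_s=\mathcal{B}(\varepsilon,n,r)$. Separating the two pieces of the sum produces the second equality, completing the proof. The main obstacle has already been cleared upstream: Proposition \ref{B_{IJ} 2} is what reduces each off-diagonal block $\text{Hom}_G(\underline{V}_I^{\otimes r},\underline{V}_J^{\otimes r})$ ($\sharp I\neq\sharp J$, even difference) to a span involving the isomorphisms $E_{JJ'}$, twisted symmetric-group actions, the expansion/contraction operators $\mathcal{D}_U^{IJ'}$ or $\mathcal{C}_U^{J'I}$, and the normalized Brauer operators $(\tau_z)^I$; Lemma \ref{B_II 2} handles the $s=t$ blocks via the classical duality of Proposition \ref{B dual}; and Lemma \ref{BCD inv} kills the odd parity blocks. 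What remains at the level of the theorem itself is only a clean partition of the direct sum, so no delicate argument is needed at this last step.
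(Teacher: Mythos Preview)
Your proposal is correct and follows essentially the same route as the paper: the paper's own proof simply cites the block decomposition \eqref{End decom 1} together with Corollary \ref{Bst all} for the first equality, and $\mathcal{B}_{ll}=\mathcal{B}(\varepsilon,n,r)_l$ together with Lemma \ref{B decom} for the second, which is exactly your assembly. Your additional remarks about the upstream ingredients (Proposition \ref{B_{IJ} 2}, Lemma \ref{B_II 2}, Lemma \ref{BCD inv}) are accurate context but not needed at this final step.
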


\begin{proof}
	The first equation follows from Corollary \ref{Bst all} and (\ref{End decom 1}). The second one is due to $\mathcal{B}_{ll}=\mathcal{B}(\varepsilon,n,r)_l$ ($0 \leq l \leq r$) and Lemma \ref{B decom}.
\end{proof}

\subsection{Levi and Parabolic dualities.}\label{LPTIs}
Keep the notations as before. In this section, we assume that $G$ is a closed subgroup of $\GL(V)$. Note that $G \times G_m$ and $\underline{G} \rtimes \textbf{G}_m$ are Levi group and parabolic group respectively. It is clear that (\ref{End decom 1}), (\ref{Hom decom 1}), (\ref{End decom 2}) and (\ref{Hom decom 2}) are valid when $G$ is a closed subgroup of $G$. 
Now we begin to consider $\text{Hom}_{G \times \textbf{G}_m}(\underline{V}_s^{\otimes r}, \underline{V}_t^{\otimes r})$.

Let $c \in \textbf{G}_m \subseteq G \times \textbf{G}_m$ and $v_1,\cdots,v_l \in V$ ($0 \leq l \leq r$), then $v_1\otimes \cdots \otimes v_l \otimes \eta^{(r-l)} \in \underline{V}_{\underline{l}}^{\otimes r}$. By the action of $\GL(\underline{V})$ on $\underline{V}$, we have $c.v_i=v_i$ ($1\leq i \leq l$) and $c.\eta=c\eta$. Thus
\begin{align}\label{c action}
\Phi(c)(v_1\otimes \cdots \otimes v_l \otimes \eta^{(r-l)})=v_1\otimes \cdots \otimes v_l \otimes (c\eta)^{(r-l)}=c^{r-l}v_1\otimes \cdots \otimes v_l \otimes \eta^{(r-l)}.
\end{align}

$\forall I \subseteq \underline{r}$, (\ref{c action}) implies that
\begin{align}\label{c action 2}
\Phi|_I(c)=\Phi(c)|_{\underline{V}_I^{\otimes r}}=c^{r-\sharp I}\text{id}_{\underline{V}_I^{\otimes r}}.
\end{align}

Hence for $0 \leq l \leq r$, we have
\begin{align}\label{c action 3}
\Phi|_l(c)=\Phi(c)|_{\underline{V}_l^{\otimes r}}=c^{r-l}\text{id}_{\underline{V}_l^{\otimes r}}.
\end{align}

Assume $I \subseteq \underline{r}$ with $\sharp I=l$, and $c \in \textbf{G}_m$. Keep in mind that $T_{II}$ gives an isomorphism of vector spaces between $\text{End}_G(V^{\otimes l})$ and $\text{End}_G(\underline{V}_I^{\otimes r})$ (see (\ref{to Brauer case 2})). Clearly, $\Phi|_I(c)=c^{r-l}\text{id}_{\underline{V}_I^{\otimes r}} \in \text{End}_G(V^{\otimes l})$. It is not hard to see $T_{II}^{-1}(\Phi|_I(c))=c^{r-l}\text{id}_{V^{\otimes l}}$. This means that $\Phi|_I(c)$ can be regarded as $c^{r-l}\text{id}_{V^{\otimes l}} \in \text{End}_G(V^{\otimes l})$. Equations (\ref{c action 2}) and (\ref{c action 3}) shows that $\underline{V}^{\otimes r}$, $\underline{V}_l^{\otimes r}$ and $\underline{V}_I^{\otimes r}$ are $G \times \textbf{G}_m$-modules.

\begin{lemma}\label{Levi 1}
	Assume $I,J \subseteq \underline{r}$ satisfying $\sharp I \neq \sharp J$, then $\text{Hom}_{G \times \textbf{G}_m}(\underline{V}_I^{\otimes r}, \underline{V}_J^{\otimes r})=0$.
\end{lemma}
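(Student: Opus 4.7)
The plan is to exploit the fact that $\mathbf{G}_m$ acts on each $\underline{V}_I^{\otimes r}$ by a scalar whose weight depends only on $\sharp I$. Concretely, by equation (\ref{c action 2}) we already know that $\Phi(c)$ acts on $\underline{V}_I^{\otimes r}$ as multiplication by $c^{r-\sharp I}$, and on $\underline{V}_J^{\otimes r}$ as multiplication by $c^{r-\sharp J}$. So $\underline{V}_I^{\otimes r}$ and $\underline{V}_J^{\otimes r}$ are $\mathbf{G}_m$-weight spaces of different weights whenever $\sharp I \neq \sharp J$.

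Given any $f \in \text{Hom}_{G \times \mathbf{G}_m}(\underline{V}_I^{\otimes r}, \underline{V}_J^{\otimes r})$, the $\mathbf{G}_m$-equivariance of $f$ says $f \circ \Phi|_I(c) = \Phi|_J(c) \circ f$ for every $c \in \mathbf{G}_m$. Substituting the scalar actions from (\ref{c action 2}), this becomes
\begin{align*}
c^{r-\sharp I}\, f(v) \;=\; c^{r-\sharp J}\, f(v)
\end{align*}
for every $v \in \underline{V}_I^{\otimes r}$ and every $c \in \mathbb{C}^\times$. Hence $\bigl(c^{r-\sharp I} - c^{r-\sharp J}\bigr) f(v) = 0$ identically in $c$.

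Since $\sharp I \neq \sharp J$ the exponents $r-\sharp I$ and $r-\sharp J$ are distinct non-negative integers, so the Laurent polynomial $c^{r-\sharp I} - c^{r-\sharp J}$ is not identically zero on $\mathbb{C}^\times$; pick any $c_0 \in \mathbb{C}^\times$ at which it is nonzero (for instance $c_0 = 2$). Then $f(v) = 0$ for every $v \in \underline{V}_I^{\otimes r}$, so $f = 0$. There is no real obstacle here: the entire content is the observation that $\underline{V}_I^{\otimes r}$ and $\underline{V}_J^{\otimes r}$ sit in different $\mathbf{G}_m$-weight spaces, and Hom between weight spaces of distinct weights vanishes.
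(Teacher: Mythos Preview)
Your proof is correct and follows essentially the same approach as the paper: both use equation~(\ref{c action 2}) to observe that $\mathbf{G}_m$ acts on $\underline{V}_I^{\otimes r}$ and $\underline{V}_J^{\otimes r}$ by the distinct scalars $c^{r-\sharp I}$ and $c^{r-\sharp J}$, and then the equivariance relation $c^{r-\sharp I}f=c^{r-\sharp J}f$ forces $f=0$. Your version is slightly more explicit about choosing a concrete $c_0$ at which the scalars differ, but the argument is the same.
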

\begin{proof}
	Set $s=\sharp I$ and $t=\sharp J$, then $0 \leq s \neq t \leq r$. For $T \in \text{Hom}_{G \times \textbf{G}_m}(\underline{V}_I^{\otimes r}, \underline{V}_J^{\otimes r})$, we have $T$ commutes with the action of $\textbf{G}_m$, i.e. $\forall c \in \textbf{G}_m$, $T\circ\Phi|_I(c)=\Phi|_J(c)\circ T$. Then (\ref{c action 2}) shows that $T\circ c^{r-s}\text{id}_{\underline{V}_I^{\otimes r}}=c^{r-t}\text{id}_{\underline{V}_J^{\otimes r}}\circ T$, i.e. $c^{r-s}T=c^{r-t}T$. Hence $T=0$. 
\end{proof}

By the proof of Lemma \ref{Levi 1}, the following Lemma holds.

\begin{lemma}\label{Levi 5}
	Assume that $I,J \subseteq \underline{r}$ with $\sharp I=\sharp J$. Then 
	\begin{align*}
	\text{Hom}_{G \times \textbf{G}_m}(\underline{V}_I^{\otimes r},\underline{V}_J^{\otimes r})=\text{Hom}_{G}(\underline{V}_I^{\otimes r},\underline{V}_J^{\otimes r}).
	\end{align*}
\end{lemma}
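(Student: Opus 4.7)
The proof will be a short direct calculation exploiting the fact that $\textbf{G}_m$ acts on each piece $\underline{V}_I^{\otimes r}$ by a single scalar depending only on $\sharp I$.

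First, the inclusion $\text{Hom}_{G \times \textbf{G}_m}(\underline{V}_I^{\otimes r},\underline{V}_J^{\otimes r}) \subseteq \text{Hom}_{G}(\underline{V}_I^{\otimes r},\underline{V}_J^{\otimes r})$ is immediate, since $G$ sits inside $G \times \textbf{G}_m$ as a subgroup and any map commuting with the larger group automatically commutes with the smaller one.

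For the reverse inclusion, the plan is to invoke equation (\ref{c action 2}) which states $\Phi|_I(c) = c^{r-\sharp I}\,\text{id}_{\underline{V}_I^{\otimes r}}$ for all $c \in \textbf{G}_m$, and similarly $\Phi|_J(c) = c^{r-\sharp J}\,\text{id}_{\underline{V}_J^{\otimes r}}$. Since the hypothesis gives $\sharp I = \sharp J$, the two scalars $c^{r-\sharp I}$ and $c^{r-\sharp J}$ coincide. Therefore, for any $T \in \text{Hom}_G(\underline{V}_I^{\otimes r},\underline{V}_J^{\otimes r})$ and any $c \in \textbf{G}_m$,
\begin{align*}
\Phi|_J(c) \circ T = c^{r-\sharp J} T = c^{r-\sharp I} T = T \circ \Phi|_I(c),
\end{align*}
so $T$ is automatically $\textbf{G}_m$-equivariant. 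Combined with the fact that $T$ already commutes with $G$, this places $T$ in $\text{Hom}_{G \times \textbf{G}_m}(\underline{V}_I^{\otimes r},\underline{V}_J^{\otimes r})$.

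There is no real obstacle here; the lemma is essentially the observation that once the $\textbf{G}_m$-weights on source and target agree, the $\textbf{G}_m$-equivariance condition becomes vacuous. This is exactly the converse perspective to Lemma \ref{Levi 1}, where unequal weights forced the Hom space to vanish.
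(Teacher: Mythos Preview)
Your proof is correct and follows essentially the same approach as the paper, which simply says the lemma holds ``by the proof of Lemma \ref{Levi 1}'': in that proof the $\textbf{G}_m$-equivariance condition reduces via (\ref{c action 2}) to $c^{r-s}T = c^{r-t}T$, which is vacuous when $s=t$. You have spelled out exactly this observation.
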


\begin{lemma}\label{Bst Levi}
	For $0 \leq s,t \leq r$, we have 
	\begin{align*}
		\text{Hom}_{G \times \textbf{G}_m}(\underline{V}_s^{\otimes r},\underline{V}_t^{\otimes r})=\begin{cases}
		0, & \text{if }s \neq t;\\
		\text{Hom}_{G}(\underline{V}_s^{\otimes r},\underline{V}_t^{\otimes r}), & \text{if }s=t.
		\end{cases}
	\end{align*}
\end{lemma}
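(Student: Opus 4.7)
The plan is to deduce the lemma directly from the decomposition \eqref{Hom decom 2} together with Lemmas \ref{Levi 1} and \ref{Levi 5}. Explicitly, I would begin from
\[
\text{Hom}_{G \times \textbf{G}_m}(\underline{V}_s^{\otimes r},\underline{V}_t^{\otimes r}) \;\cong\; \bigoplus_{\substack{I \subseteq \underline{r}\\ \sharp I=s}}\bigoplus_{\substack{J \subseteq \underline{r}\\ \sharp J=t}}\text{Hom}_{G \times \textbf{G}_m}(\underline{V}_I^{\otimes r},\underline{V}_J^{\otimes r}),
\]
so that everything reduces to the block-level statement for fixed $I,J$.

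In the case $s \neq t$, every pair $(I,J)$ indexing a summand satisfies $\sharp I \neq \sharp J$, so each block vanishes by Lemma \ref{Levi 1}. The direct sum is then $0$, which gives the first branch.

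In the case $s=t$, every pair satisfies $\sharp I=\sharp J$, and Lemma \ref{Levi 5} identifies each block $\text{Hom}_{G \times \textbf{G}_m}(\underline{V}_I^{\otimes r},\underline{V}_J^{\otimes r})$ with $\text{Hom}_G(\underline{V}_I^{\otimes r},\underline{V}_J^{\otimes r})$. Assembling the direct sum back using the $G$-analogue of \eqref{Hom decom 2} (i.e.\ equation \eqref{Hom decom 1}) then yields $\text{Hom}_G(\underline{V}_s^{\otimes r},\underline{V}_t^{\otimes r})$.

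There is no real obstacle here: the lemma is essentially a packaging statement. The only thing to be slightly careful about is checking that the canonical inclusion/projection isomorphisms used in \eqref{Hom decom 1} and \eqref{Hom decom 2} are compatible with the identification in Lemma \ref{Levi 5}, so that the assembled map $\bigoplus_{I,J}\text{Hom}_G(\underline{V}_I^{\otimes r},\underline{V}_J^{\otimes r}) \to \text{Hom}_G(\underline{V}_s^{\otimes r},\underline{V}_t^{\otimes r})$ agrees on the nose with the Levi-invariant side; but both identifications come from the same block decomposition $\underline{V}_l^{\otimes r}=\bigoplus_{\sharp I=l}\underline{V}_I^{\otimes r}$, so this compatibility is automatic.
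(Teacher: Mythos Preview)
Your proposal is correct and matches the paper's own proof, which simply cites Lemma~\ref{Levi 1}, Lemma~\ref{Levi 5}, \eqref{Hom decom 1}, and \eqref{Hom decom 2}. Your write-up is just a slightly more detailed unpacking of the same argument.
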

\begin{proof}
	The result follows from Lemma \ref{Levi 1}, Lemma \ref{Levi 5}, (\ref{Hom decom 1}) and (\ref{Hom decom 2}). 
\end{proof}

\begin{remark}\label{Bst Levi 2}
	Take $s=t$ in Lemma \ref{Bst Levi}, it shows: for $0 \leq l \leq r$, then 
	\begin{align*}
	\text{End}_{G \times \textbf{G}_m}(\underline{V}_l^{\otimes r})=\text{End}_{G}(\underline{V}_l^{\otimes r}).
	\end{align*}
	Moreover, by Remark \ref{type A dual} and Corollary \ref{B_ll}, we have
	\begin{align}\label{ABCDl}
	\text{End}_{G \times \textbf{G}_m}(\underline{V}_l^{\otimes r})=\text{End}_{G}(\underline{V}_l^{\otimes r})=\begin{cases}
	D(n,r)_l, & \text{if }G=\GL(V);\\
	\mathcal{B}(\varepsilon,n,r)_l, & \text{if }G=\text{O}(V) \text{ or }G=\text{Sp}(V).
	\end{cases}
	\end{align}
\end{remark}

\begin{corollary}\label{inclusion1}
There is an isomorphism of algebras
\begin{align*}
\text{End}_{G \times \textbf{G}_m}(\underline{V}^{\otimes r}) \cong \bigoplus_{l=0}^r\text{End}_{G \times \textbf{G}_m}(\underline{V}_l^{\otimes r})=\bigoplus_{l=0}^r\text{End}_{G}(\underline{V}_l^{\otimes r}).
\end{align*}
\end{corollary}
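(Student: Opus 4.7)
The plan is to chase the decomposition $\underline{V}^{\otimes r}=\bigoplus_{l=0}^r \underline{V}_l^{\otimes r}$ through the endomorphism algebra, using the fact that this is precisely the $\textbf{G}_m$-weight decomposition to force vanishing of off-diagonal Hom-spaces. The second equality is then immediate from the machinery developed in the previous subsection.

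First I would invoke the vector space decomposition \eqref{End decom 2}, which gives
\[
\text{End}_{G\times\textbf{G}_m}(\underline{V}^{\otimes r})\;\cong\;\bigoplus_{s=0}^{r}\bigoplus_{t=0}^{r}\text{Hom}_{G\times\textbf{G}_m}(\underline{V}_s^{\otimes r},\underline{V}_t^{\otimes r}).
\]
Next I would apply Lemma \ref{Bst Levi}: the off-diagonal summands ($s\neq t$) vanish because the $\textbf{G}_m$-action on $\underline{V}_s^{\otimes r}$ and $\underline{V}_t^{\otimes r}$ is by distinct characters $c\mapsto c^{r-s}$ and $c\mapsto c^{r-t}$ (see \eqref{c action 3}), so any intertwiner must be zero. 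What remains is
\[
\text{End}_{G\times\textbf{G}_m}(\underline{V}^{\otimes r})\;\cong\;\bigoplus_{l=0}^{r}\text{End}_{G\times\textbf{G}_m}(\underline{V}_l^{\otimes r})
\]
as vector spaces.

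The next step is to upgrade this vector space isomorphism to an algebra isomorphism. This is essentially formal: since every $f\in\text{End}_{G\times\textbf{G}_m}(\underline{V}^{\otimes r})$ preserves each $\textbf{G}_m$-weight space $\underline{V}_l^{\otimes r}$ (by the same weight argument used for the vanishing of off-diagonal Homs), $f$ is block diagonal with respect to the decomposition $\underline{V}^{\otimes r}=\bigoplus_l \underline{V}_l^{\otimes r}$. Composition of two such block-diagonal endomorphisms is computed blockwise, so the assignment $f\mapsto (f|_{\underline{V}_l^{\otimes r}})_{l=0}^r$ is a ring homomorphism, and it is a bijection by the previous paragraph.

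Finally, the equality $\text{End}_{G\times\textbf{G}_m}(\underline{V}_l^{\otimes r})=\text{End}_G(\underline{V}_l^{\otimes r})$ is the content of Remark \ref{Bst Levi 2} (equivalently, the $s=t$ case of Lemma \ref{Bst Levi}), giving the second equality of the corollary. There is no real obstacle here: all the analytic content is packed into the preceding lemmas, and the corollary is a bookkeeping step. The only mild subtlety worth flagging in the write-up is explicitly noting that the isomorphism is of algebras and not merely of vector spaces, which is why I would emphasize the block-diagonal observation.
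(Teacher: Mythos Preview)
Your proof is correct and follows essentially the same approach as the paper, which simply cites \eqref{End decom 2} and Lemma \ref{Bst Levi}. Your version is more detailed in that you explicitly justify why the isomorphism is one of algebras (via block-diagonality) and separately invoke Remark \ref{Bst Levi 2} for the second equality, both of which the paper leaves implicit.
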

\begin{proof}
	 This statement follows from \eqref{End decom 2} and Lemma \ref{Bst Levi}. 
\end{proof}

Now it is a position to give the duality related to Levi subgroup $G \times \textbf{G}_m$.

\begin{thm} (Levi duality)\label{AB Dual}
	Keep the notations as above. Then
	\begin{align*}
	\text{End}_{G \times \textbf{G}_m}(\underline{V}^{\otimes r})=\begin{cases}
	D(n,r), & \text{if }G=\GL(V);\\
	\mathcal{B}(\varepsilon,n,r), & \text{if }G=\text{Sp}(V)\text{  or }G=\text{O}(V).
	\end{cases}
	\end{align*}
\end{thm}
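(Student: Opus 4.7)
The statement is essentially an assembly of results already established in the section, so my strategy is to combine them in the correct order rather than perform new calculations. The plan is to work through the decomposition $\underline{V}^{\otimes r}=\bigoplus_{l=0}^r \underline{V}_l^{\otimes r}$ and exploit the fact that the $\textbf{G}_m$-action separates the homogeneous pieces $\underline{V}_l^{\otimes r}$ by distinct eigencharacters.

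First I would invoke Corollary \ref{inclusion1}, which already records the isomorphism
\begin{align*}
\text{End}_{G \times \textbf{G}_m}(\underline{V}^{\otimes r}) \;\cong\; \bigoplus_{l=0}^r\text{End}_{G \times \textbf{G}_m}(\underline{V}_l^{\otimes r}) \;=\; \bigoplus_{l=0}^r\text{End}_{G}(\underline{V}_l^{\otimes r}).
\end{align*}
The first isomorphism uses Lemma \ref{Bst Levi} to kill all off-diagonal Hom-spaces between $\underline{V}_s^{\otimes r}$ and $\underline{V}_t^{\otimes r}$ when $s\neq t$ (since $\textbf{G}_m$ acts by the distinct scalars $c^{r-s}$ and $c^{r-t}$, by \eqref{c action 3}), while the second equality is the content of Remark \ref{Bst Levi 2}.

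Next I would substitute the identification \eqref{ABCDl}, which expresses each summand $\text{End}_G(\underline{V}_l^{\otimes r})$ as $D(n,r)_l$ in the case $G=\GL(V)$ and as $\mathcal{B}(\varepsilon,n,r)_l$ in the cases $G=\text{O}(V)$ or $G=\text{Sp}(V)$. This reduces the claim to showing
\begin{align*}
\bigoplus_{l=0}^r D(n,r)_l \;=\; D(n,r) \qquad \text{and} \qquad \bigoplus_{l=0}^r \mathcal{B}(\varepsilon,n,r)_l \;=\; \mathcal{B}(\varepsilon,n,r),
\end{align*}
as subalgebras of $\text{End}_\mathbb{C}(\underline{V}^{\otimes r})$. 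The first decomposition is \cite[Lemma 3.5(1)]{BYY} and the second is Lemma \ref{B decom}, both already at our disposal. Concatenating these identifications gives the theorem.

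The only subtlety to double-check is that the isomorphism coming from Corollary \ref{inclusion1} is truly an isomorphism of \emph{algebras}, not merely of vector spaces, so that both sides may be identified with the same subalgebra of $\text{End}_\mathbb{C}(\underline{V}^{\otimes r})$. This is immediate, however, because any $G\times \textbf{G}_m$-endomorphism of $\underline{V}^{\otimes r}$ preserves the $\textbf{G}_m$-isotypic decomposition $\underline{V}^{\otimes r}=\bigoplus_{l=0}^r \underline{V}_l^{\otimes r}$, so composition of endomorphisms respects the block-diagonal structure and matches the multiplication inside $D(n,r)$ (resp.\ $\mathcal{B}(\varepsilon,n,r)$) coming from Lemma \ref{B decom}. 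I expect no real obstacle beyond this bookkeeping; the theorem is a clean corollary of the restricted duality plus the eigenspace separation provided by $\textbf{G}_m$.
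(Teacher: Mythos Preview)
Your proposal is correct and follows essentially the same approach as the paper: the paper's proof simply cites Corollary \ref{inclusion1}, equation \eqref{ABCDl}, Lemma \ref{B decom}, and \cite[Lemma 3.5(1)]{BYY} in exactly the way you assemble them. Your added remarks explaining why Corollary \ref{inclusion1} holds and why the identification is one of algebras are reasonable bookkeeping, but add nothing new to the paper's own argument.
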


\begin{proof}
	The result follows from Corollary \ref{inclusion1}, Proposition \ref{B decom}, \cite[Lemma 3.5(1)]{BYY} and (\ref{ABCDl}).
\end{proof}

For $M \subseteq \text{End}_\mathbb{C}(\underline{V}^{\otimes r})$, we define $M^V:=\{\phi \in M \mid \Phi(e^v) \circ \phi=\phi \circ \Phi(e^v), \forall v \in V\}$. If $M$ is a subalgebra of $\text{End}_\mathbb{C}(\underline{V}^{\otimes r})$, then $M^{V}$ is a subalgebra of $\text{End}_\mathbb{C}(\underline{V}^{\otimes r})$ as well. The duality related to parabolic subgroup $\underline{G} \rtimes \textbf{G}_m$ can be easily obtained.

\begin{corollary} (Parabolic duality)\label{inclusion3}
	Let $G$ be a closed subgroup of $\GL(V)$, then 
	\begin{align*}
	\text{End}_{\underline{G} \rtimes \textbf{G}_m}(\underline{V}^{\otimes r})=\left( \text{End}_{G \times \textbf{G}_m}(\underline{V}^{\otimes r}) \right)^V.
	\end{align*}
	In particular, Theorem \ref{AB Dual} implies that
	\begin{align}\label{P Dual}
	\text{End}_{\underline{G} \rtimes \textbf{G}_m}(\underline{V}^{\otimes r})=\begin{cases}
	D(n,r)^V, & \text{if }G=\GL(V);\\
	\mathcal{B}(\varepsilon,n,r)^V, & \text{if }G=\text{Sp}(V)\text{  or }G=\text{O}(V).
	\end{cases}
	\end{align}
\end{corollary}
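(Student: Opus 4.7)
The plan is to reduce the corollary to a single group-theoretic observation, namely that $\underline{G} \rtimes \textbf{G}_m$ is generated as an abstract group by its Levi subgroup $G \times \textbf{G}_m$ together with the translation elements $\{e^v \mid v \in V\}$. Once this generation statement is in hand, a $\mathbb{C}$-linear endomorphism of $\underline{V}^{\otimes r}$ commutes with all of $\underline{G} \rtimes \textbf{G}_m$ if and only if it commutes with every element of this generating set, i.e. if and only if it lies in $\text{End}_{G \times \textbf{G}_m}(\underline{V}^{\otimes r})$ and at the same time commutes with $\Phi(e^v)$ for every $v \in V$. By the definition of the superscript $V$ given just above the corollary, this last pair of conditions is precisely the statement that the endomorphism lies in $\left(\text{End}_{G \times \textbf{G}_m}(\underline{V}^{\otimes r})\right)^V$, which gives the first equation.

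To verify the generation claim, I would unfold the multiplication rule \eqref{def of group}. For any $(g, v) \in \underline{G}$ one checks directly that $e^v \cdot (g, 0) = (1 \cdot g, \rho(1)(0) + v) = (g, v)$, so every element of $\underline{G}$ factors as a translation from $\{e^v\}$ times an element of $G$. Combined with the semidirect decomposition $\langle \underline{G}, \textbf{G}_m \rangle = \underline{G} \rtimes \textbf{G}_m$ from Section \ref{1}, an arbitrary element of the parabolic group therefore lies in the subgroup generated by $G$, $\textbf{G}_m$, and the translations $\{e^v\}$; since $G$ and $\textbf{G}_m$ already generate the Levi $G \times \textbf{G}_m$, this proves what is needed. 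The reverse inclusion is trivial because both $G \times \textbf{G}_m$ and each $e^v$ sit inside $\underline{G} \rtimes \textbf{G}_m$.

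With the first displayed equation established, the second equation \eqref{P Dual} follows immediately by substituting the Levi duality of Theorem \ref{AB Dual} into it and recording the two cases $G = \GL(V)$ and $G = \text{O}(V)$ or $\text{Sp}(V)$ separately. I do not anticipate any genuine obstacle: the content is a bookkeeping corollary, since all the serious representation-theoretic work (the restricted duality of Theorem \ref{restricted} and the Levi duality of Theorem \ref{AB Dual}) has already been carried out, and the passage from the Levi to the parabolic centralizer is purely formal once one notes that the unipotent radical of the parabolic is generated by the translation subgroup $\{e^v \mid v \in V\}$.
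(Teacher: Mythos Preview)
Your proposal is correct and follows essentially the same approach as the paper: both arguments rest on factoring an arbitrary $(g,v)\in\underline{G}$ as a product of a translation $e^{v'}$ and an element $(g,0)\in G$ (you use $e^v\cdot(g,0)=(g,v)$, the paper uses $(g,0)\cdot e^{g^{-1}(v)}=(g,v)$), and then observe that commuting with the generators $G\times\textbf{G}_m$ and $\{e^v\}$ is equivalent to commuting with all of $\underline{G}\rtimes\textbf{G}_m$. The deduction of \eqref{P Dual} from Theorem~\ref{AB Dual} is identical in both.
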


\begin{proof}
	$\forall f \in \text{End}_{\underline{G} \rtimes \textbf{G}_m}(\underline{V}^{\otimes r})$, then $f \circ \Phi(e^v)=\Phi(e^v) \circ f$ for all $v \in V$. Since $G \times \textbf{G}_m$ is a subgroup of $\underline{G} \rtimes \textbf{G}_m$, we have $f \in \text{End}_{G \times \textbf{G}_m}(\underline{V}^{\otimes r})$. Thus $f \in \left( \text{End}_{G \times \textbf{G}_m}(\underline{V}^{\otimes r}) \right)^V$. Namely, $\text{End}_{\underline{G} \rtimes \textbf{G}_m}(\underline{V}^{\otimes r}) \subseteq \left( \text{End}_{G \times \textbf{G}_m}(\underline{V}^{\otimes r}) \right)^V$.

	Conversly, assume $f \in \left( \text{End}_{G \times \textbf{G}_m}(\underline{V}^{\otimes r}) \right)^V$. It is clear that $f \in \text{End}_{G \times \textbf{G}_m}(\underline{V}^{\otimes r})$ as well as $f \circ \Phi(e^v)=\Phi(e^v) \circ f$ for all $v \in V$. For all $ \underline{g}=(g,v) \in \underline{G}$ with $g \in G$ and $v \in V$, we see that $\underline{g}=(g,0)\cdot e^{g^{-1}(v)}=e^{c^{-1}v} \cdot (g,0)$. Thus $f \in \text{End}_{\underline{G} \rtimes \textbf{G}_m}(\underline{V}^{\otimes r})$). 
\end{proof}

\begin{remark}
	(\ref{P Dual}) shows that $\text{End}_{\underline{\GL(V)} \rtimes \textbf{G}_m}(\underline{V}^{\otimes r})=D(n,r)^V$, which is just \cite[Theorem 5.3(1)]{BYY} and \cite[Theorem 1.4]{LB}. 
	
	It is clear that $\Phi(\underline{G} \rtimes \textbf{G}_m)$ commutes with $\Psi(\frak S_r)$, i.e. $\mathbb{C}\Psi(\frak S_r) \subseteq \text{End}_{\underline{G} \rtimes \textbf{G}_m}(\underline{V}^{\otimes r})$.
\end{remark}

\section{Tensor invariants of parabolic groups.}\label{3}
Keep the notations as before. For $w \in V$ and $\textbf{v} \in \underline{V}^{\otimes r}$, we define \begin{align}\label{D}
	D_{\textbf{v}}^w:=\Phi(e^w)(\textbf{v})-\textbf{v}.
\end{align} 

Assume that $\textbf{v}'=w_1 \otimes \cdots \otimes w_t \otimes \eta^{\otimes (r-t)}$ with $0 \leq t \leq r$ and $w_1,\cdots,w_t \in V$. Since $e^w(\eta)=w+\eta$ and $e^w|_{V}=\text{id}_V$, then (\ref{tensor action}) shows that $D_{\textbf{v}'}^w=w_1 \otimes \cdots \otimes w_t \otimes (w+\eta)^{\otimes (r-t)}-w_1 \otimes \cdots \otimes w_t \otimes \eta^{\otimes (r-t)}=w_1 \otimes \cdots \otimes w_t \otimes (w+\eta)^{\otimes (r-t)}-\textbf{v}'$.

Recall that $\rho_I: \mathcal{B}_r(\varepsilon n) \rightarrow \text{End}_\mathbb{C}(\underline{V}^{\otimes r})$ is defined in (\ref{tau action 4}), which is a linear map. For $x \in \mathcal{B}_r(\varepsilon n)$, we define $\rho(x):=\sum_{I \subseteq \underline{r}}\rho_I(x)$. It is clear that $\rho: \mathcal{B}_r(\varepsilon n) \rightarrow \text{End}_\mathbb{C}(\underline{V}^{\otimes r})$ is also a linear map. By (\ref{special}), we have \begin{align}\label{rho action}
\rho(\Psi|_r(s))=\Psi(s).
\end{align}

\begin{lemma}\label{key lemma}
	Assume $G=\text{O}(V)$ with $\dim V=n \geq 2r$. Suppose that $\delta \in \mathcal{B}_r(n)$ and $J \subsetneq \underline{r}$. If $\forall w \in V$ and $\forall \textbf{v} \in \underline{V}_J^{\otimes r}$, we always have $\rho(\delta)(D_\textbf{v}^w)=0$, then $\rho(\delta)(\underline{V}_J^{\otimes r})=0$, i.e. $\rho_J(\delta)=0$.
\end{lemma}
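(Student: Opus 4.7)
The plan is to extract the degree-one-in-$w$ component of the hypothesis and then exploit the nondegeneracy of $\omega$ together with the dimension bound $n \geq 2r$ to isolate $\rho_J(\delta)$. Since $e^w$ fixes $V$ pointwise and sends $\eta$ to $w+\eta$, expanding $\Phi(e^w)$ on $\textbf{v} \in \underline{V}_J^{\otimes r}$ yields $D_\textbf{v}^w = \sum_{\phi \neq S \subseteq \underline{r} \setminus J} \textbf{v}_S$, with $\textbf{v}_S \in \underline{V}_{J \cup S}^{\otimes r}$ homogeneous of degree $|S|$ in $w$. Because each $\rho_I(\delta)$ annihilates $\underline{V}_K^{\otimes r}$ for $K \neq I$ and maps $\underline{V}_I^{\otimes r}$ into the single graded summand $\underline{V}_{|I|}^{\otimes r}$, the hypothesis $\rho(\delta)(D_\textbf{v}^w) = 0$ splits into one equation per value of $|S|$. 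The degree-one slice is
\[
\sum_{i \notin J} \rho_{J \cup \{i\}}(\delta)(\textbf{v}_i^w) = 0, \qquad \forall\, w \in V,\ \textbf{v} \in \underline{V}_J^{\otimes r},
\]
where $\textbf{v}_i^w$ is obtained from $\textbf{v}$ by replacing the $\eta$ in position $i$ by $w$.

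Next I would split $\delta = \delta_0 + \delta_1$ along the Brauer basis $\{\Psi|_r(s)\tau_z\}$, putting $\Psi|_r(s)\tau_z$ into $\delta_0$ exactly when $e(z) \subseteq J$. Since $\rho_J(\tau_z)$ vanishes whenever $e(z) \not\subseteq J$, one has $\rho_J(\delta) = \rho_J(\delta_0)$, so the goal reduces to $\rho_J(\delta_0) = 0$. In the $\delta_0$-contribution the contractions of $\tau_z$ never touch position $i \notin J$; combined with the commutation $\Psi(s)\iota_i^w = \iota_{s(i)}^w\Psi(s)$, where $\iota_j^w$ replaces the $\eta$ in slot $j$ by $w$, this gives
\[
\sum_{i \notin J} \rho_{J \cup \{i\}}(\delta_0)(\textbf{v}_i^w) = \mathrm{INS}^w\bigl(\rho_J(\delta_0)(\textbf{v})\bigr),
\]
where $\mathrm{INS}^w := \sum_j \iota_j^w$ sums the insertion over all $\eta$-positions of its operand. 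The key point is that as $i$ ranges over $\underline{r} \setminus J$, the indices $s(i)$ sweep exactly the $\eta$-positions of $\rho_J(\Psi|_r(s)\tau_z)(\textbf{v}) \in \underline{V}_{s(J)}^{\otimes r}$, so the scattered insertions re-collect into a single $\mathrm{INS}^w$. The complementary $\delta_1$-contribution reduces to those diagrams $z$ having one edge $\{i,k\}$ with $i \notin J$, $k \in J$ and remaining edges in $J$; each such term carries a scalar factor $\omega(v_k,w)$.

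The final move exploits $n \geq 2r$: for any fixed $\textbf{v}$, the orthogonal complement of $\mathrm{span}\{v_k : k \in J\}$ has dimension at least $n - |J| \geq r \geq 1$, so there is an ample supply of $w \in V$ with $\omega(v_k,w) = 0$ for every $k \in J$. For any such $w$ the entire $\delta_1$-part of the degree-one equation vanishes, leaving $\mathrm{INS}^w(\rho_J(\delta_0)(\textbf{v})) = 0$. Decomposing $\rho_J(\delta_0)(\textbf{v}) = \sum_{|K|=|J|} \textbf{u}_K$ with $\textbf{u}_K \in \underline{V}_K^{\otimes r}$, the images $\iota_j^w(\textbf{u}_K)$ lie in pairwise distinct summands $\underline{V}_{K \cup \{j\}}^{\otimes r}$, so each $\iota_j^w(\textbf{u}_K)$ must individually vanish. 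Extending $\{v_k : k \in J\}$ to a basis of a subspace $U_1$ of dimension $|J|$, completing to a basis $\{g_1,\dots,g_n\}$ of $V$ with $g_{|J|+1},\dots,g_n$ spanning $U_1^\perp$, and taking $w = g_p$ for any $p > |J|$, the coefficients of $\textbf{u}_K$ can be read off as the coefficients of linearly independent basis vectors of $\underline{V}_{K \cup \{j\}}^{\otimes r}$, forcing $\textbf{u}_K = 0$. Hence $\rho_J(\delta_0)(\textbf{v}) = 0$ for every $\textbf{v}$, and $\rho_J(\delta) = 0$ follows. The principal technical obstacle is the second-paragraph identification: verifying that after summing over $i$ the term-dependent insertions at position $s(i)$ coalesce into the clean operator $\mathrm{INS}^w$, since without this the separation argument in the last step—which is what makes the dimension hypothesis bite—would not be available.
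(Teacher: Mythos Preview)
Your degree-one extraction and the identity
\[
\sum_{i\notin J}\rho_{J\cup\{i\}}(\delta_0)(\textbf{v}_i^w)=\mathrm{INS}^w\bigl(\rho_J(\delta_0)(\textbf{v})\bigr)
\]
are both correct, as is the observation that every surviving $\delta_1$-term in degree one carries a factor $\omega(v_k,w)$ and hence dies for $w\perp\{v_k:k\in J\}$. This is a clean reorganisation, genuinely different from the paper's tagging argument (which works with the whole $D_\textbf{v}^w$, picks $\textbf v$ with basis-vector entries, and chooses $w=f_t$ a ``fresh'' orthonormal basis vector so that the number of slots occupied by $f_t$ or $\eta$ separates the $\delta_0$- and $\delta_1$-parts).

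The gap is in your last step. The sets $K\cup\{j\}$ are \emph{not} pairwise distinct as $(K,j)$ varies: for any $L$ with $|L|=l+1$, each of the $l+1$ pairs $(L\setminus\{j\},j)$ with $j\in L$ lands in the same summand $\underline V_L^{\otimes r}$, so the $L$-component of $\mathrm{INS}^w(\mathbf x)$ is $\sum_{j\in L}\iota_j^w(\mathbf u_{L\setminus\{j\}})$ and these terms can cancel. Your attempted repair via a basis $\{g_m\}$ with $w=g_p$, $p>|J|$, does not rescue this: the vectors $\mathbf u_K$ are produced by $\rho_J(\tau_z)$, whose expansion part inserts $\sum_m f_m\otimes f^m=\sum_m g_m\otimes g_m$ (in any orthonormal basis). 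Hence $\mathbf u_{L\setminus\{j'\}}$ will in general already contain $g_p$ at position $j$, so an inserted $g_p$ at position $j$ coming from $\iota_j^{g_p}(\mathbf u_{L\setminus\{j\}})$ cannot be read off as an isolated coefficient.

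What you actually need is that $\bigcap_{w\in W}\ker\bigl(\mathrm{INS}^w|_{\underline V_l^{\otimes r}}\bigr)=0$ once $\dim W$ is large enough (here $\dim W\ge n-l\ge r+1$). This is true---$\mathrm{INS}^w$ is the derivation extending $\eta\mapsto w$, and varying $w$ over independent vectors successively cuts the kernel down---but it requires its own argument and is not the one-line consequence you present. Either supply that injectivity statement, or revert to using higher-degree pieces of $D_\textbf v^w$ (or all of it at once, as the paper does) to get the extra leverage.
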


\begin{proof}
	Denote $l=:\sharp J$. Suppose $G$ leaves invariant a nondegenerate bilinear form $\omega$ on $V$. For a basis $\{f_p \mid 1 \leq p \leq n\}$ for $V$ , denote by $\{f^p \mid 1 \leq p \leq n\}$ the dual basis for $V$ relative to $\omega$. Namely, $w(f_p,f^q)=\delta_{pq}$ with $1 \leq p,q \leq n$.
	
	By Lemma \ref{Brauer basis}, we assume that $\delta=\sum_{s \in \frak S_r} \sum_{z \in Z_r} c_{s,z}\Psi|_r(s)\tau_z$ with $c_{s,z} \in \mathbb{C}$. Let $\delta_1=\sum_{s \in \frak S_r} \sum_{z \in Z_r \atop e(z) \subseteq J} c_{s,z}\Psi|_r(s)\tau_z$ and $\delta_2=\sum_{s \in \frak S_r} \sum_{z \in Z_r \atop e(z) \nsubseteq J} c_{s,z}\Psi|_r(s)\tau_z$, then $\delta=\delta_1 + \delta_2$ (Note that $e(z)$ is defined in \S\ref{algebra B}). Hence we have $\rho(\delta)=\rho(\delta_1) + \rho(\delta_2)$ by the definition of $\rho$. 
	
	Given $\textbf{v} \in \underline{V}_J^{\otimes r}$, suppose that $\rho(\delta_2)(D_\textbf{v}^w)\neq 0$. If $u_1 \otimes \cdots \otimes u_r$ is a summand of $\rho(\delta_2)(D_\textbf{v}^w)$, then $\sharp \{u_i \mid u_i=w \text{ or } u_i=\eta, \text{ with }1 \leq i \leq r\}<r-l$. While if $u'_1 \otimes \cdots \otimes u'_r$ is a summand of $\rho(\delta_1)(D_\textbf{v}^w)$, then $\sharp \{u'_i \mid u'_i=w \text{ or } u'_i=\eta, \text{ with }1 \leq i \leq r\}=r-l$. 
	
	We can choose a proper basis $\{f_p \mid 1 \leq p \leq n\}$ for $V$, such that $f_p=f^p$ for all $1 \leq p \leq n$. Without any loss of generality, we assume that $J=\underline{l}$ with $0 \leq l \leq r$. Now we take $w_1,\cdots,w_l \in \{f_1,\cdots,f_n\}$. Since $n \geq 2r$ and $r>l$, then $n>l+1$, thus there exists $1 \leq t \leq n$, such that $f_t \neq w_i$ for all $i \in \underline{l}$. Hence $f_t \notin \sum_{i=0}^{l}\mathbb{C}w_i \oplus \mathbb{C}(f_1+\cdots+f_n)$. Take $\textbf{v}'=w_1 \otimes \cdots \otimes w_l \otimes \eta^{(n-l)} \in \underline{V}_{\underline{l}}^{\otimes r}$, then we have $\rho(\delta)(D_{\textbf{v}'}^{f_t})=0$. If $\rho(\delta_2)(D_{\textbf{v}'}^{f_t})\neq 0$, then each summand of $\rho(\delta_2)(D_{\textbf{v}'}^{f_t})=\rho(\delta_2)(w_1 \otimes \cdots \otimes w_l \otimes {f_t}^{(n-l)}+\cdots)$ can not be cancelled out in $\rho(\delta)(D_{\textbf{v}'}^{f_t})$. This contradicts with the fact $\rho(\delta)(D_{\textbf{v}'}^{f_t})=0$. So we have $\rho(\delta_2)(D_{\textbf{v}'}^{f_t})=0$. Hence $\rho(\delta_1)(D_{\textbf{v}'}^{f_t})=0$. The fact $\rho(\delta_1)(D_{\textbf{v}'}^{f_t})=0$ implies that all the summand of $\rho(\delta_1)(D_{\textbf{v}'}^{f_t})=\rho(\delta_1)(w_1 \otimes \cdots \otimes w_l \otimes {f_t}^{(n-l)}+\cdots)$ cancel each other out. So $\rho(\delta_1)(w_1 \otimes \cdots \otimes w_l \otimes {f_t}^{(n-l)}+\cdots)=0$ is also valid when $f_t$ is replaced by $\eta$, i.e. $\rho(\delta_1)(w_1 \otimes \cdots \otimes w_l \otimes \eta^{(n-l)}+\cdots)=0$. Hence $\rho(\delta_1)(w_1 \otimes \cdots \otimes w_l \otimes \eta^{(n-l)})=0$. Since $\textbf{v}'=w_1 \otimes \cdots \otimes w_l \otimes \eta^{(n-l)}$ runs through a basis of $\underline{V}_J^{\otimes r}$, then $\rho(\delta_1)(\underline{V}_J^{\otimes r})=0$. It is clear that $\rho(\delta_2)(\underline{V}_J^{\otimes r})=0$ by definition, then $\rho(\delta)(\underline{V}_J^{\otimes r})=0$.
\end{proof}

\begin{lemma}\label{key lemma 2}
	Assume $G=\text{Sp}(V)$ with $\dim V=n>2r$. Suppose that $\delta \in \mathcal{B}_r(-n)$ and $J \subsetneq \underline{r}$. If $\forall w \in V$ and $\forall \textbf{v} \in \underline{V}_J^{\otimes r}$, we always have $\rho(\delta)(D_\textbf{v}^w)=0$, then $\rho(\delta)(\underline{V}_J^{\otimes r})=0$, i.e. $\rho_J(\delta)=0$.
\end{lemma}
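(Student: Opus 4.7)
The plan is to follow the template of Lemma \ref{key lemma} with the key adaptation that one cannot choose $f_p=f^p$; instead a symplectic basis of $V$ is used, and the argument crucially exploits the skew-symmetry $\omega(f_t,f_t)=0$.

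By Lemma \ref{Brauer basis} I would decompose $\delta=\delta_1+\delta_2$, where $\delta_1$ collects the terms $\Psi|_r(s)\tau_z$ with $e(z)\subseteq J$ and $\delta_2$ the rest. Since $\rho_I(\tau_z)=0$ whenever $e(z)\nsubseteq I$, immediately $\rho_J(\delta_2)=0$, so the task reduces to $\rho(\delta_1)(\underline{V}_J^{\otimes r})=0$. Without loss of generality take $J=\underline{l}$ with $l<r$; fix a symplectic basis $\{f_1,\ldots,f_n\}$ of $V$ with dual basis $\{f^1,\ldots,f^n\}$ (so $f^p\neq f_p$, and for each basis vector $f_t$ there is a unique basis element $v$, its \emph{basis dual}, with $\omega(f_t,v)\neq 0$); pick $w_1,\ldots,w_l$ among these basis vectors, and form $\mathbf{v}':=w_1\otimes\cdots\otimes w_l\otimes\eta^{\otimes(r-l)}\in\underline{V}_J^{\otimes r}$. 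The hypothesis $n>2r$ guarantees at least $n-2l\geq n-2(r-1)\geq 3$ available choices for $f_t$ simultaneously distinct from $\{w_1,\ldots,w_l\}$ and from the basis pre-duals of $\{w_1,\ldots,w_l\}$; this forces $\omega(f_t,w_i)=0$ for every $i\in\underline{l}$, while $\omega(f_t,f_t)=0$ holds automatically by skew-symmetry.

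By hypothesis $\rho(\delta)(D_{\mathbf{v}'}^{f_t})=0$. The vanishing conditions above are exactly what is needed to make $\rho(\delta_2)(D_{\mathbf{v}'}^{f_t})=0$ hold automatically: for any $z$ appearing in $\delta_2$ there is a pair $(i,j)\in e(z)$ with $j>l$, and when $\rho_K(\tau_z)$ is applied to a summand $T_K$ of $D_{\mathbf{v}'}^{f_t}$ (with $T_K$ having $f_t$ in positions $K\setminus\underline{l}$ and $\eta$ in positions outside $K$) the factor $\omega(v_i,v_j)$ is either $\omega(f_t,f_t)=0$ (when both $i,j>l$) or $\omega(w_i,f_t)=0$ (when $i\leq l$). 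Hence $\rho(\delta_1)(D_{\mathbf{v}'}^{f_t})=0$. Writing $D_{\mathbf{v}'}^{f_t}=\sum_{\emptyset\neq K\subseteq\{l+1,\ldots,r\}}T_K$ and observing that $\rho(\delta_1)$ contracts only inside positions $\leq l$ and leaves the $f_t$/$\eta$ pattern on positions $>l$ intact, I conclude that the coefficient pattern of the resulting equation is structurally independent of whether positions $>l$ carry $f_t$ or $\eta$; the formal substitution $f_t\mapsto\eta$ then yields $(2^{r-l}-1)\rho_J(\delta_1)(\mathbf{v}')=0$, hence $\rho_J(\delta_1)(\mathbf{v}')=0$. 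Letting $(w_1,\ldots,w_l)$ vary over tuples of basis vectors gives $\rho_J(\delta_1)=0$, and combining with $\rho_J(\delta_2)=0$ finishes the proof.

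The main obstacle is justifying the substitution step $f_t\mapsto\eta$ in the symplectic setting: one must verify that despite $f^t\neq f_t$, the skew-symmetric pairing $\omega$ does not introduce spurious summands in $\rho(\delta_1)(D_{\mathbf{v}'}^{f_t})$ that would break the structural equivalence between the $f_t$-case and the $\eta$-case. The strict inequality $n>2r$ (compared with $n\geq 2r$ in the orthogonal case) is needed precisely because the two conditions $f_t\neq w_i$ and $f^t\neq w_i$ become independent in the symplectic setting, effectively doubling the number of basis vectors that must be avoided when choosing $f_t$.
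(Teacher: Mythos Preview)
Your proposal follows essentially the same template as the paper, which for this lemma simply says ``the proof is similar to Lemma~\ref{key lemma}.'' You have correctly identified the two places where the symplectic case differs from the orthogonal one: first, one cannot take $f_p=f^p$, so a genuine symplectic basis must be used; second, the skew-symmetry $\omega(f_t,f_t)=0$ together with the choice of $f_t$ orthogonal to every $w_i$ gives the vanishing of the $\delta_2$-contribution directly, in place of the counting argument the paper uses in the orthogonal case. Your explanation of why the hypothesis is strengthened to $n>2r$ (needing to avoid both the $w_i$ and their symplectic partners when selecting $f_t$) is exactly the point.

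One remark on the substitution step $f_t\mapsto\eta$: both your write-up and the paper's proof of Lemma~\ref{key lemma} treat this step somewhat informally. The underlying reason it works is that for $e(z)\subseteq\underline{l}$ the operator $\rho_K(\tau_z)$ acts only on the first $l$ tensor slots and leaves the tail untouched, so that
\[
\rho(\delta_1)\bigl(w_1\otimes\cdots\otimes w_l\otimes y_{l+1}\otimes\cdots\otimes y_r\bigr)
=\sum_{s,z} c_{s,z}\,\Psi(s)\bigl[(\tau_z^{(l)}(w_1\otimes\cdots\otimes w_l))\otimes y_{l+1}\otimes\cdots\otimes y_r\bigr]
\]
is genuinely multilinear in $(y_{l+1},\ldots,y_r)$, independently of which $y_j$ lie in $V$ and which equal $\eta$. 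The vanishing for the tail $(f_t,\ldots,f_t)$ then transfers to the tail $(\eta,\ldots,\eta)$ because the $\eta$-slots are detected by their positions alone (no $A_z$-entry equals $\eta$), and your avoidance condition $f_t\notin\{w_1,\ldots,w_l\}$ is what prevents spurious coincidences on the $V$-side. Making this explicit would strengthen the argument, but it is no more of a gap than what the paper itself leaves in the orthogonal case.
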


\begin{proof}
	The proof is similar to Lemma \ref{key lemma}.
\end{proof}

\begin{thm}\label{inv thm}
	Assume $G=\text{Sp}(V)$ with $\dim V>2r$, or $G=\text{O}(V)$ with $\dim V \geq 2r$, then we have $\mathcal{B}(\varepsilon,n,r)^V=\{\rho(f) \mid f \in \mathcal{B}_r(\varepsilon n)\}$, i.e. $\text{End}_{\underline{G} \rtimes \textbf{G}_m}(\underline{V}^{\otimes r})=\{\rho(f) \mid f \in \mathcal{B}_r(\varepsilon n)\}$.
\end{thm}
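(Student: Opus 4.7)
The plan is to prove both inclusions.

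For $\{\rho(f) : f \in \mathcal{B}_r(\varepsilon n)\} \subseteq \mathcal{B}(\varepsilon,n,r)^V$, I would use linearity of $\rho$ together with Corollary \ref{cor 1.3} to reduce $V$-invariance to a check on the generators $\Psi|_r(\frak S_r)$ and $P_{r-1} = \frac{1}{n}\tau_{r-1,r}$ of $\mathcal{B}_r(\varepsilon n)$. For a permutation, (\ref{rho action}) yields $\rho(\Psi|_r(s)) = \Psi(s)$, which commutes with every $\Phi(g)$ (in particular $\Phi(e^w)$) because coordinate permutations commute with diagonal tensor actions. For the Brauer generator $\tau_{r-1,r}$, one verifies $\rho(\tau_{r-1,r}) \circ \Phi(e^w) = \Phi(e^w) \circ \rho(\tau_{r-1,r})$ stratum by stratum via the explicit formula (\ref{tau action 2}) and the rule $e^w \cdot \eta = w + \eta$; Lemmas \ref{key lemma} and \ref{key lemma 2} are the technical tools that convert such commutator statements into stratum-local vanishings on the difference vectors $D_\textbf{v}^w$.

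For $\mathcal{B}(\varepsilon,n,r)^V \subseteq \{\rho(f) : f \in \mathcal{B}_r(\varepsilon n)\}$, fix $\phi \in \mathcal{B}(\varepsilon,n,r)^V = \text{End}_{\underline{G} \rtimes \textbf{G}_m}(\underline{V}^{\otimes r})$ and put $f := \phi|_{V^{\otimes r}}$. The subspace $V^{\otimes r} = \underline{V}_r^{\otimes r}$ is $\underline{G} \rtimes \textbf{G}_m$-stable (it is the $\textbf{G}_m$-weight-zero piece and is pointwise fixed by every $e^w$), so the restriction is a $G$-equivariant endomorphism of $V^{\otimes r}$, hence $f \in \mathcal{B}_r(\varepsilon n)$ by Proposition \ref{B dual}. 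I would then prove $\phi = \rho(f)$ by downward induction on the grading index $l$. The base $l = r$ is immediate from the construction of $f$. For the inductive step, assume $\phi = \rho(f)$ on $W_{l+1}$ and set $\psi := \phi - \rho(f)$. By the forward direction $\rho(f) \in \mathcal{B}^V$, so $\psi \in \mathcal{B}^V$; by $\textbf{G}_m$-invariance, $\psi$ preserves the grading; by induction, $\psi|_{W_{l+1}} = 0$. For $\textbf{v} \in \underline{V}_l^{\otimes r}$ and $w \in V$, the vector $D_\textbf{v}^w = \Phi(e^w)\textbf{v} - \textbf{v}$ lies in $W_{l+1}$, so $\psi(D_\textbf{v}^w) = 0$; combined with the $V$-invariance identity $\psi(\Phi(e^w)\textbf{v}) = \Phi(e^w)\psi(\textbf{v})$ this gives $\Phi(e^w)(\psi(\textbf{v})) = \psi(\textbf{v})$ for every $w \in V$. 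Thus $\psi(\textbf{v})$ lies in the $V$-fixed subspace of $\underline{V}^{\otimes r}$; a short stratum-by-stratum matching argument (decomposing a putative fixed vector across the $\underline{V}_I^{\otimes r}$ and reading off the top-stratum contributions forced by each lower component) identifies this fixed subspace as exactly $V^{\otimes r}$. Since $\psi(\textbf{v}) \in \underline{V}_l^{\otimes r}$ by grading preservation and $\underline{V}_l^{\otimes r} \cap V^{\otimes r} = 0$ for $l < r$, we conclude $\psi(\textbf{v}) = 0$, completing the induction.

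The main obstacle is the forward direction, specifically the commutation of $\rho(\tau_{r-1,r})$ with $\Phi(e^w)$: the Brauer contraction-expansion activates only on strata containing both positions $r-1$ and $r$, while $\Phi(e^w)$ moves mass into higher strata, so one must track carefully which contributions survive and cancel. The dimension hypothesis $\dim V \geq 2r$ (resp.\ $\dim V > 2r$) enters precisely through Lemmas \ref{key lemma} and \ref{key lemma 2}, which require enough basis vectors in $V$ to distinguish the possible Brauer diagrams and hence to make the stratum-local vanishing criteria sharp. Once the forward direction is granted, the backward induction runs cleanly using only filtration preservation (Proposition \ref{key decom}), the $\textbf{G}_m$-grading, and the identification of the $V$-fixed subspace of $\underline{V}^{\otimes r}$.
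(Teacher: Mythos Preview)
Your plan has a genuine gap in the forward direction. You propose to reduce the $V$-invariance of $\rho(f)$ to the generators $\Psi|_r(s)$ and $\tau_{r-1,r}$ via Corollary~\ref{cor 1.3} ``together with linearity of $\rho$''. But linearity does not let you reduce to algebra generators; that would require $\rho$ to be multiplicative, and it is not. For $r\ge 3$ one has $\tau_{12}\tau_{23}\tau_{12}=\tau_{12}$ in $\mathcal{B}_r(\varepsilon n)$, hence $\rho(\tau_{12}\tau_{23}\tau_{12})=\rho(\tau_{12})$; yet on $\underline{V}_{\{1,2\}}^{\otimes r}$ the operator $\rho(\tau_{23})$ vanishes, so $\rho(\tau_{12})\rho(\tau_{23})\rho(\tau_{12})$ is zero there while $\rho(\tau_{12})$ is not. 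Thus knowing $V$-invariance for the generators would not propagate to all of $\mathcal{B}_r(\varepsilon n)$. The paper does not attempt any such reduction: its forward inclusion is argued directly on each $\textbf{v}\in\underline{V}_I^{\otimes r}$ by ``imitating'' the computations (\ref{eq3-2}) and (\ref{eq4}), where one first replaces $f$ by its truncation $g$ (the terms with $e(z)\subseteq I$) so that the Brauer bars act only on $V$-slots and therefore commute with $\Phi(e^w)$ on that stratum. You also misplace Lemmas~\ref{key lemma} and~\ref{key lemma 2}: in the paper they are the engine of the \emph{hard} inclusion $\mathcal{B}(\varepsilon,n,r)^V\subseteq\{\rho(f)\}$, not of the forward one.

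For the hard inclusion your route is genuinely different from the paper's. The paper writes $\sigma=\sum_I\rho_I(f_I)$ via (\ref{for final use 2}) and uses Lemmas~\ref{key lemma}/\ref{key lemma 2} inductively to force $\rho_I(f_I)=\rho_I(f)$ for a single $f=f_{\underline r}$; this is where the dimension hypothesis enters. Your argument instead sets $\psi=\phi-\rho(f)$ and shows $\psi(\textbf{v})$ is $V$-fixed, then invokes $(\underline V^{\otimes r})^V=V^{\otimes r}$. That identification is correct under the stated hypotheses (indeed already for $\dim V\ge r$: pick a basis index not appearing among the $V$-slots of a putative lowest-stratum component and read off its coefficient from the infinitesimal action), and your induction is cleaner than the paper's combinatorics. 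However, your argument \emph{needs} the forward inclusion (to know $\psi\in\mathcal{B}(\varepsilon,n,r)^V$), whereas the paper's backward induction is self-contained. So the logical structure you propose is more fragile: the flaw in your forward step currently blocks the whole proof, while in the paper the two inclusions are independent and the hard one stands on Lemmas~\ref{key lemma}/\ref{key lemma 2} alone.
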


\begin{proof}
	Suppose $\dim V=n$, and assume that $\sigma \in \mathcal{B}(\varepsilon,n,r)^V$. Equation (\ref{for final use 2}) implies that there exist $f_I \in \mathcal{B}_r(\varepsilon n)$ with all $I \subseteq \underline{r}$, such that $\sigma=\sum_{I \subseteq \underline{r}}\rho_I(f_I)$. Now we use induction to prove.
	
	If $\sharp I=r$, then it must have $I=\underline{r}$. So we assume that there exist $f \in \mathcal{B}_r(\varepsilon n)$ with $0 \leq k \leq r-1$, such that $\forall I \subseteq \underline{r}$ with $\sharp I \geq k+1$ we have $\rho_I(f_I)=\rho_I(f)$. Then $\sigma=\sum\limits_{I \subseteq \underline{r}, ~ \sharp I \leq k} \rho_I(f_I)+\sum\limits_{I \subseteq \underline{r}, ~ \sharp I >k} \rho_I(f)$. It is easy to see $\forall w \in V,e^w(\eta)=w+\eta$ and $e^w|_{V}=\text{id}_V$. For all $v_1,\cdots,v_k \in V$ and $\forall w \in V$, we have $v_1 \otimes \cdots \otimes v_k \otimes (w+\eta)^{\otimes (r-k)}=v_1 \otimes \cdots \otimes v_k \otimes \eta^{\otimes (r-k)}+D_{\textbf{v}}^w$, where $\textbf{v}:=v_1 \otimes \cdots \otimes v_k \otimes \eta^{\otimes (r-k)}$. By (\ref{tensor action}), we have
	\begin{align*}
	(\sigma \circ \Phi(e^w))(v_1 \otimes \cdots \otimes v_k \otimes \eta^{\otimes (r-k)})
	=\sigma(v_1 \otimes \cdots \otimes v_k \otimes (w+\eta)^{\otimes (r-k)})
	\end{align*}
	\begin{align*}
	=(\sum\limits_{I \subseteq \underline{r}, ~ \sharp I \leq k} \rho_I(f_I)+\sum\limits_{I \subseteq \underline{r}, ~ \sharp I >k} \rho_I(f))(v_1 \otimes \cdots \otimes v_k \otimes (w+\eta)^{\otimes (r-k)})
	\end{align*}
	\begin{align*}
	=(\rho_{\underline{k}}(f_{\underline{k}})+\sum\limits_{I \subseteq \underline{r}, ~ \sharp I >k} \rho_I(f))(v_1 \otimes \cdots \otimes v_k \otimes (w+\eta)^{\otimes (r-k)})
	\end{align*}
	\begin{align*}
	=\rho_{\underline{k}}(f_{\underline{k}})(v_1 \otimes \cdots \otimes v_k \otimes \eta^{\otimes (r-k)})+\rho(f)(D_{\textbf{v}}^w)
	\end{align*}
	\begin{align}\label{eq3}
	=\rho(f_{\underline{k}})(v_1 \otimes \cdots \otimes v_k \otimes \eta^{\otimes (r-k)})+\rho(f)(D_{\textbf{v}}^w).
	\end{align}
	
	By Lemma \ref{Brauer basis}, we suppose that $f_{\underline{k}}=\sum_{s \in \frak S_r} \sum_{z \in Z_r} c_{s,z}\Psi|_r(s)\tau_z$
with $c_{s,z} \in \mathbb{C}$. Let $g=\sum_{s \in \frak S_r} \sum_{z \in Z_r \atop e(z) \subseteq \underline{k}} c_{s,z}\Psi|_r(s)\tau_z$ and $h=\sum_{s \in \frak S_r} \sum_{z \in Z_r \atop e(z) \nsubseteq \underline{k}} c_{s,z}\Psi|_r(s)\tau_z$, then $f_{\underline{k}}=g+h$ (Note that $e(z)$ is defined in \S\ref{algebra B}). Hence we have $\rho(f_{\underline{k}})=\rho(g) + \rho(h)$ by the definition of $\rho$. Obviously, $\rho(h)(v_1 \otimes \cdots \otimes v_k \otimes \eta^{\otimes (r-k)})=\rho_{\underline{k}}(h)(v_1 \otimes \cdots \otimes v_k \otimes \eta^{\otimes (r-k)})=0$. Hence \begin{align}\label{eq3-1}
\rho(f_{\underline{k}})(v_1 \otimes \cdots \otimes v_k \otimes \eta^{\otimes (r-k)})=\rho(g)(v_1 \otimes \cdots \otimes v_k \otimes \eta^{\otimes (r-k)}).
\end{align}	Hence, we have \begin{align}\label{eq3-1.5}
\rho_{\underline{k}}(f_{\underline{k}})=\rho_{\underline{k}}(g).
\end{align}  

Equation (\ref{eq3}) and (\ref{eq3-1}) implies that 
\begin{align}\label{eq3-2}
(\sigma \circ \Phi(e^w))(v_1 \otimes \cdots \otimes v_k \otimes \eta^{\otimes (r-k)})=\rho(g)(v_1 \otimes \cdots \otimes v_k \otimes \eta^{\otimes (r-k)})+\rho(f)(D_{\textbf{v}}^w).
\end{align}

On the other hand, 	
	\begin{align*}
	(\Phi(e^w) \circ \sigma)(v_1 \otimes \cdots \otimes v_k \otimes \eta^{\otimes (r-k)})
	=\Phi(e^w)(\rho_{\underline{k}}(f_{\underline{k}})(v_1 \otimes \cdots \otimes v_k \otimes \eta^{\otimes (r-k)}))
	\end{align*}
	\begin{align*}
	=\Phi(e^w)\left(\rho(f_{\underline{k}})(v_1 \otimes \cdots \otimes v_k \otimes \eta^{\otimes (r-k)})\right) \overset{(\ref{eq3-1})}{=} \Phi(e^w)\left(\rho(g)(v_1 \otimes \cdots \otimes v_k \otimes \eta^{\otimes (r-k)})\right)
	\end{align*}
	\begin{align*}
	=\rho(g)\left(\Phi(e^w)(v_1 \otimes \cdots \otimes v_k \otimes \eta^{\otimes (r-k)})\right) \text{ }(\text{By Proposition \ref{B dual}})
	\end{align*}
	\begin{align*}
	=\rho(g)(v_1 \otimes \cdots \otimes v_k \otimes (w+\eta)^{\otimes (r-k)}) 
	\end{align*}
	\begin{align}\label{eq4}
	=\rho(g)(v_1 \otimes \cdots \otimes v_k \otimes \eta^{\otimes (r-k)})+\rho(g)(D_{\textbf{v}}^w)
	\end{align}
	
	Since $\sigma \in \mathcal{B}(\varepsilon,n,r)^V$, i.e. $\Phi(e^w) \circ \sigma=\sigma \circ \Phi(e^w)$, ($\ref{eq3-2}$) and ($\ref{eq4}$) yields that $\forall w \in V$, $\rho(f)(D_{\textbf{v}}^w)=\rho(g)(D_{\textbf{v}}^w)$. So we have $\rho(f)(D_{\textbf{u}}^w)=\rho(g)(D_{\textbf{u}}^w)$ for all $\textbf{u} \in \underline{V}_{\underline{k}}^{\otimes r}$ and $\forall w \in W$, namely $\rho(f-g)(D_{\textbf{u}}^w)=0$ for all $\textbf{u} \in \underline{V}_{\underline{k}}^{\otimes r}$ and $\forall w \in W$. Since $k \leq r-1$, due to Lemma $\ref{key lemma}$ and Lemma $\ref{key lemma 2}$, we conclude that $\left(\rho(f)-\rho(g)\right)(\underline{V}_{\underline{k}}^{\otimes r})=0$. Namely, $\rho(f)|_{\underline{V}_{\underline{k}}^{\otimes r}}=\rho(g)|_{\underline{V}_{\underline{k}}^{\otimes r}}$, which implies that $\rho_{\underline{k}}(f)=\rho_{\underline{k}}(g)$. Due to (\ref{eq3-1.5}), we have $\rho_{\underline{k}}(f)=\rho_{\underline{k}}(f_{\underline{k}})$. Generally, we can similarly prove that $\rho_I(f)=\rho_I(f_I)$ for all $I \subseteq \underline{r}$ with $\sharp I=k$. By induction on $\sharp I$, we have $\rho_I(f)=\rho_I(f_I)$ for all $I \subseteq \underline{r}$. Thus $\sigma=\sum_{I \subseteq \underline{r}}\rho_I(f_I)=\sum_{I \subseteq \underline{r}}\rho_I(f)=\rho(f)$. So we prove $\mathcal{B}(\varepsilon,n,r)^V \subseteq \{\rho(f) \mid f \in \mathcal{B}_r(\varepsilon n)\}$.

	Conversely, for $f \in \mathcal{B}_r(\varepsilon n)$, we need to show that $\rho(f) \in \mathcal{B}(\varepsilon,n,r)^V$. Lemma \ref{pre lemma} implies that $\rho(f) \in \mathcal{B}(\varepsilon,n,r)$. For all $v_1,\cdots,v_k \in V$ and $\forall w \in V$, imitate the proof of (\ref{eq3-2}) and (\ref{eq4}), we can yield that $(\rho(f) \circ \Phi(e^w))(v_1 \otimes \cdots \otimes v_k \otimes \eta^{\otimes (r-k)})=(\Phi(e^w) \circ \rho(f))(v_1 \otimes \cdots \otimes v_k \otimes \eta^{\otimes (r-k)})$. Hence $\forall I \subseteq \underline{r}$ and $\textbf{v} \in \underline{V}_I^{\otimes r}$, we have $(\rho(f) \circ \Phi(e^w))(\textbf{v})=(\Phi(e^w) \circ \rho(f))(\textbf{v})$. This shows that $\rho(f) \circ \Phi(e^w)=\Phi(e^w) \circ \rho(f)$, which means that $\rho(f)  \in \mathcal{B}(\varepsilon,n,r)^V$. Namely, $\{\rho(f) \mid f \in \mathcal{B}_r(\varepsilon n)\} \subseteq \mathcal{B}(\varepsilon,n,r)^V$.	
\end{proof}

\section*{Comments}
This is a primitive version.

\section*{Acknowledgment}
The author thanks Bin Shu for his guidance and suggestions on this topic.

\end{document}